\newtheorem{defi}{Definition}[subsection]
\newtheorem{lemm}[defi]{Lemma}
\newtheorem{prop}[defi]{Proposition}
\newtheorem{theo}[defi]{Theorem}
\newtheorem{coro}[defi]{Corollary}
\newtheorem{pref}[defi]{Proposition-definition}
\newtheorem*{theovide}{Theorem}
\newtheorem{defisec}{Definition}[section]
\newtheorem{lemmsec}[defisec]{Lemma}
\newtheorem{propsec}[defisec]{Proposition}
\newtheorem{theosec}[defisec]{Theorem}
\newcommand{\paragraphit}{
	\@startsection{paragraphit}{4}{0mm}{\baselineskip}{\baselineskip}{\normalfont\normalsize\itshape}
}
\newcommand{\rem}{\paragraph{Remark :}}
\newcommand{\rems}{\paragraph{Remarks :}}
\newcommand{\ex}{\paragraph{Example :}}
\newcommand{\exs}{\paragraph{Examples :}}
\newcommand{\nota}{\paragraph{Notation :}}
\newcommand{\voc}{\paragraph{Vocabulary :}}
\newcommand{\N}{\mathbf{N}}
\newcommand{\R}{\mathbf{R}}
\newcommand{\C}{\mathbf{C}}
\newcommand{\HR}[1]{\mathbf{H}_{#1}(\R)}
\newcommand{\HC}[1]{\mathbf{H}_{#1}(\C)}
\newcommand{\HP}[1]{\mathbf{H}_{#1}}
\newcommand{\resp}{respectively\ }
\newcommand{\diam}{\operatorname{diam}}
\newcommand{\arccosh}{\operatorname{arccosh}}
\newcommand{\cyl}{\operatorname{cyl}}
\newcommand{\id}{\operatorname{id}}
\renewcommand{\epsilon}{\varepsilon}
\renewcommand{\phi}{\varphi}
\newcommand{\oasly}{$\omega$-almost surely ($\omega$-as)\renewcommand{\oasly}{$\omega$-as}}
\newcommand{\oeb}{$\omega$-essentially bounded ($\omega$-eb)\renewcommand{\oeb}{$\omega$-eb}}
\newcommand{\intval}[2]{\left\{ {#1}, \dots, {#2} \right\}}
\newcommand{\gro}[3]{\left< #1,#2\right>_{#3}}
\newcommand{\limo}{\lim_\omega}
\newcommand{\dist}[3]{\left| #3- #2 \right|_{#1}}
\newcommand{\distV}[1]{\left| \ . \ \right|_{#1}}
\newcommand{\distX}[2]{\left| #2- #1 \right|}
\newcommand{\distSC}[2]{\left\| #2- #1 \right\|}
\newcommand{\distL}[2]{d\left(#1,#2\right)}
\newcommand{\distLbis}[3]{d_{#1}\left(#2,#3\right)}
\renewcommand{\angle}[2]{\theta\left( #1 , #2\right)}
\newcommand{\geo}[2]{\left[ #1, #2 \right]}
\newcommand{\rinj}[2]{r_{\textit{inj}}\left(#1,#2\right)}
\newcommand{\stab}[1]{\operatorname{Stab}\left( #1\right)}
\newcommand{\rips}[1]{P_d\left( #1\right)}
\newcommand{\ripsn}[2]{P_d^{(#1)}\left( #2\right)}
\def\restriction#1#2{\mathchoice
              {\setbox1\hbox{${\displaystyle #1}_{\scriptstyle #2}$}
              \restrictionaux{#1}{#2}}
              {\setbox1\hbox{${\textstyle #1}_{\scriptstyle #2}$}
              \restrictionaux{#1}{#2}}
              {\setbox1\hbox{${\scriptstyle #1}_{\scriptscriptstyle #2}$}
              \restrictionaux{#1}{#2}}
              {\setbox1\hbox{${\scriptscriptstyle #1}_{\scriptscriptstyle #2}$}
              \restrictionaux{#1}{#2}}}
\def\restrictionaux#1#2{{#1\,\smash{\vrule height .8\ht1 depth .85\dp1}}_{\,#2}}
\renewcommand{\leq}{\leqslant}
\renewcommand{\geq}{\geqslant}
\definecolor{monbleu}{rgb}{0.25, 0.25, 0.75}
\newcommand{\cmath}[1]{\todo[noline, color=blue!20]{#1}}
\begin{document}

\title{Asphericity and small cancellation theory for rotation families of groups.}
\author{R\'emi Coulon \\ \small{IRMA, Universit\'e de Strasbourg}\\\small{7 rue Ren\'e Descartes, 67084 Strasbourg Cedex, France} \\ \small{\texttt{coulon@math.unistra.fr}}}
\date{}

\maketitle

\abstract{
	Using small cancellation for rotating families of groups, we construct new examples of aspherical polyhedra. This paper has been accepted in ``Groups, Geometry, and Dynamics''.
}

\tableofcontents

\section*{Introduction}

\paragraph{} 
The goal of this article is to produce new examples of aspherical polyhedra.
The construction we have in mind is the following: let $P$ be an aspherical simplicial complex and $Q$ a subcomplex of $P$.
We define $\bar P$ by attaching to $P$ a cone of base $Q$.
We are looking for conditions under which $\bar P$ remains aspherical.
This  kind of situation was already studied by J. H. C. Whitehead in the 1940's.
In \cite{Whi41} and \cite{Whi46}, he studied the second homotopy group of a space $\bar P$, obtained by attaching 2-cells to a cell complex $P$.
He proved that $\pi_2\left(\bar P\right)$ exactly describes the identities between the relators that define the projection $\pi_1\left( P\right) \rightarrow \pi_1\left( \bar P\right)$.
In this paper we try to attach higher dimensional cells.

\paragraph{} 
Our main example the following.
Let $\HC n$ be the complex, $n$-dimen\--sional, hyperbolic space.
We consider $SO(n,1)$ as the stabilizer of the real hyperbolic space $\HR n$ in $\HC n$.
Let $G \subset SU(n,1)$ be a  real lattice, i.e. a lattice of $SU(n,1)$ such that $H=G\cap SO(n,1)$ is still a lattice of $SO(n,1)$.
We want to study the group $G / \ll H\gg$.

\begin{theovide}
There exists a finite index subgroup $G'$ of $G$ with the following property.
Let $H'$ be the group $G' \cap SO(n,1)$.
Let $\bar P$ be the space obtained by attaching to $\HC n/G'$ a cone of base $\HR n /H'$.
The complex $\bar P$ is a classifying space for the group $\bar G' = G'/ \ll H' \gg$.
\end{theovide}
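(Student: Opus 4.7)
My plan is to realize $\bar P$ as the quotient of $\HC n$ — suitably modified by attaching cones on each translate of $\HR n$ — by the natural $G'$-action, and then invoke the asphericity theorem for rotating families developed earlier in the paper. The inclusion $\HR n/H' \hookrightarrow \HC n/G'$ is induced by $H' = G' \cap SO(n,1) \hookrightarrow G'$ and is $\pi_1$-injective, so van Kampen applied to the cone attachment yields $\pi_1(\bar P) = G'/\ll H'\gg = \bar G'$.

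Next, I build a geometric model. Let $Z$ be the complex obtained from $\HC n$ by attaching, for each coset $gH' \in G'/H'$, a cone on the totally geodesic copy $g\HR n$; the setwise stabilizer of $g\HR n$ in $G'$ is $gH'g^{-1}$. Since each cone base is contractible, $Z \simeq \HC n$ is contractible, and $Z/G' = \bar P$. The topological universal cover $\tilde{\bar P}$ carries a free $\bar G'$-action, and asphericity of $\bar P$ reduces to contractibility of $\tilde{\bar P}$, which in turn can be encoded by the quotient $Z/\ll H'\gg$ together with the natural action of $\bar G'$ on it.

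The passage to a finite-index subgroup is the heart of the argument. Using residual finiteness of the arithmetic lattice $G$, one chooses $G'$ deep enough in a congruence cover so that $G'$ is torsion-free, distinct translates $g\HR n \neq g'\HR n$ are pairwise separated by more than the threshold required by the paper's main theorem, and the action of each conjugate $gH'g^{-1}$ on a normal neighborhood of $g\HR n$ satisfies the rotation-width hypothesis. With these conditions in force, the main asphericity theorem for rotating families delivers contractibility of $\tilde{\bar P}$, and freeness of the $\bar G'$-action identifies $\bar P$ with a $K(\bar G', 1)$.

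The main obstacle is adapting the abstract rotating-family framework to the complex-hyperbolic setting. Classically each rotating subgroup fixes its subspace pointwise, but in $PU(n,1)$ the pointwise stabilizer of $\HR n$ is essentially trivial; the ``rotation'' must therefore be read off the complex normal bundle of $\HR n \subset \HC n$ and the holonomy of $H'$ within it. Establishing, uniformly over all conjugates of $H'$ and inside a single congruence subgroup, the quantitative separation and rotation-width estimates needed to feed the paper's small cancellation axioms is the technical crux of the whole construction.
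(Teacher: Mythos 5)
Your overall architecture matches the paper's: attach cones along the translates of $\HR n$ to form the cone-off, view $\bar P$ as its quotient, and invoke the very small cancellation theorem for rotating families together with the asphericity theorem. But the step you call ``the heart of the argument'' --- choosing $G'$ of finite index so that distinct translates $g\HR n$ have uniformly bounded overlap --- has a genuine gap: residual finiteness of $G$ is \emph{not} sufficient for this. The set of $g\in G$ with $\diam\bigl(g\HR n^{+20\delta}\cap \HR n^{+20\delta}\bigr)$ large is an infinite union of double cosets $\stab{\HR n}\, g\, \stab{\HR n}$, so to exclude it from a finite-index subgroup containing $\stab{\HR n}$ you need the (non-normal) subgroup $G\cap SO(n,1)$ to be \emph{closed in the profinite topology} of $G$, i.e.\ separable. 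This is exactly where the paper invokes Bergeron's ``Lemme principal'' (an algebraic subgroup intersected with a finitely generated linear group is separable), which is a much stronger input than residual finiteness. Residual finiteness enters only in the other, easier half of the argument: replacing $H'=G'\cap SO(n,1)$ by a deep finite-index subgroup so that the injectivity radius $\rho=\rinj{H}{X}$ dominates both $\delta$ and the overlap bound $\Delta$, as required by the hypotheses $\delta/\rho\leq\delta_0$, $\Delta/\rho\leq\Delta_0$. You flag these quantitative estimates as ``the technical crux'' but do not supply them; the paper's Lemmas on overlap and injectivity radius are precisely that missing content.

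A secondary misconception: your concern that the pointwise stabilizer of $\HR n$ in $PU(n,1)$ is essentially trivial, and that the ``rotation'' must be read off the normal bundle, is not an obstacle in this framework. The rotating-family axioms only require each $H_i$ to be a finite-index normal subgroup of $\stab{Y_i}$ preserving $Y_i$ setwise, with all nontrivial elements translating by more than $2\pi\sinh r_0$; no pointwise fixing is involved. The ``rotation'' happens in the cone chart $C(Y_i)/H_i\cong C(Y_i/H_i)$, whose hyperbolicity is guaranteed by the first hyperbolicity theorem once the injectivity radius is large. Also note that the universal cover $\bar X$ is not simply postulated as $Z/\ll H'\gg$: its existence, hyperbolicity, and local structure come from the Cartan--Hadamard theorem for the orbifold $Q=\dot X/G'$, and its contractibility from the Rips-type asphericity theorem applied to the locally contractible balls of the cone-off; your reduction asserts this identification rather than deriving it.
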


This situation is similar to a result of N. Bergeron  in \cite{Ber03} who proved that the map from the homology associated with $\HR n / G \cap SO(n,1)$ to the one of $\HC n /G$ is one to one.
We also establish some analogue statements to our theorem for the following pairs $\left( SO(n,1), SO(k,1)\right)$, $\left( SU(n,1), SU(k,1)\right)$ and $\left( Sp(n,1), Sp(k,1)\right)$.

\paragraph{} Our strategy is the following. We endow the space $\bar P$ with a local hyperbolic geometry and apply a version of the Cartan-Hadamard theorem.
This will prove that the universal cover $\bar X$ of $\bar P$ is globally hyperbolic. 
We deduce then the asphericity from a kind of Rips' theorem (cf. Th. \ref{aspherical hyperbolic complex}):
if $\bar X$ is a hyperbolic simplicial complex, it is sufficient to prove that $\bar X$ is locally contractible.
This last local assumption follows from the property of the developing map.

\paragraph{} The hyperbolic structure on $\bar P$ is constructed as follows. 
Given a subcomplex $Q$ of $P$, we endow the cone of base $Q$ with a metric modelled on the hyperbolic disc. 
An argument of Berestovskii tells us that if $Q$ is CAT(1) then the cone is CAT(-1) (see \cite{BriHae99}).
In particular it is hyperbolic.
The problem is then to find some conditions so that the complex $\bar P$ remains locally hyperbolic.

\paragraph{} With this aim in mind, we explore an idea of M. Gromov (see \cite{Gro03}) to extend the small cancellation theory to a so called  ``rotation family'' of groups. 
If $X$ is a metric space and $G$ a group acting on $X$ by isometries, a rotation family is a pairwise distinct collection $\left(Y_i, H_i\right)_{i \in I}$ such that
\begin{itemize}
	\item $H_i$ is a subgroup of $G$ stabilizing $Y_i \subset X$, 
	\item there is an action of $G$ on $I$ compatible with the one on $X$ (i.e. for all $g \in G$, for all $i \in I$, $Y_{gi} = gY_i$ and $H_{gi} = gH_i g^{-1}$).
\end{itemize}
In order to study such a family, we define two quantities that respectively play the role of the largest piece and the smallest relator in the usual small cancellation theory.
The constant $\Delta$ measures the overlap between two $Y_i$'s whereas $\rho$ is the minimal translation length of a non-trivial element that belongs to an $H_i$.

\paragraph{}
Given a rotation family, we define the cone-off over $X$, denoted by $\dot X$, by attaching to $X$ cones of base $Y_i$.
We consider the space $\bar P = \dot X / G$ whose fundamental group is $\bar G = G /\ll H_i \gg$.
The small cancellation provides us a framework where it is possible to endow $\bar P$ with a local hyperbolic geometry.
Moreover this theory recovers the usual small cancellation (see \cite{LynSch77} or \cite{Olc91a}) and the small cancellation with graphs as well (see \cite{Gro03} or \cite{Oll06}).

\paragraph{} We describe the space $\bar P$  as an orbifold using two kind of charts: the cones and the cone-off. 
Section \ref{part cone} is dedicated to the study of the geometry of the cones.
Adapting an argument of Berestovskii, we prove that the cone over a hyperbolic space remains hyperbolic (cf. Th. \ref{first hyperbolicity theorem}).
Section \ref{part cone-off} deals with the cone-off $\dot X$.
In particular we prove the following fact. 
Under small cancellation assumptions, the cone-off over a hyperbolic space is still hyperbolic (cf. Th. \ref{second hyperbolicity theorem}).
We choose for this proof an asymptotical point of view that involves ultra-limits as in \cite{Dru02}.
The goal of the main technical lemma is to switch the cone-off construction and the ultra-limit:
given a sequence of metric spaces $X_n$, we prove (see Cor. \ref{local isometry cone off}) that there exists a local isometry between the cone-off over the ultra-limit of $X_n$ and the ultra-limit of $\dot X_n$.
Section \ref{part small cancellation} mixes all the previous ingredients in order to obtain the following theorems.
\begin{theovide}[cf Th. {\ref{very small cancellation}}]
There exist two positive numbers $\delta_0$ and $\Delta_0$ satisfying the following property.
		
		Let  $X$ be a geodesic, simply connected, $\delta$-hyperbolic space and  $G$ a group acting properly, by isometries on $X$. Let $(Y_i,H_i)_{i \in I}$ be a rotation family, such that each $Y_i$ is strongly-quasi-convex.

		Let $N$ be the normal subgroup of $G$ generated by the $H_i$'s and $\bar{G}$ the quotient group $G/N$. Assume also that 
		\begin{displaymath}
		\frac  \delta \rho \leq \delta_0 \ \text{ and } \ \frac{\Delta}\rho\leq \Delta_0
		\end{displaymath}
		
		Then the universal cover of $\bar P$, $\bar X$, is hyperbolic and $\bar{G}$ acts properly by isometries on $\bar X$.
		
		Moreover if $G$ (\resp $H_i$) acts co-compactly on $X$ (\resp $Y_i$) and $I/G$ is finite, then $\bar X / \bar G$ is compact. In particular $\bar G$ is hyperbolic.
\end{theovide}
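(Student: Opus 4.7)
The plan is to assemble the geometric results proved in the preceding sections and then promote the local hyperbolicity of $\bar P$ to global hyperbolicity of its universal cover by a Cartan--Hadamard-type argument.

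First, the constants $\delta_0$ and $\Delta_0$ will be the small cancellation thresholds produced by Theorem \ref{second hyperbolicity theorem}. Applying that theorem to the rotation family $(Y_i,H_i)_{i\in I}$ acting on $X$ yields a uniform hyperbolicity constant $\dot\delta$ for the cone-off $\dot X$, depending only on $\delta_0$ and $\Delta_0$. At the same time, Theorem \ref{first hyperbolicity theorem} guarantees that each cone $C(Y_i)\subset \dot X$ is uniformly hyperbolic. Consequently, the quotient $\bar P = \dot X / G$ carries a natural orbifold structure with two kinds of charts, namely generic cone-off charts, locally isometric to $\dot X$, and cone charts at the images of the apices $c_i$, each with local group $H_i$ and locally modelled on $C(Y_i)$. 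Both families of charts enjoy uniform local hyperbolicity.

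I would then pass to the universal cover using a developing map. Since $X$ is simply connected and attaching cones over subsets of $X$ introduces no new loops, $\dot X$ is simply connected; hence $\bar P$ has fundamental group $\bar G = G/\ll H_i\gg$ and its universal cover $\bar X$ has deck group $\bar G$. By means of the developing map into the local models above, $\bar X$ inherits the local hyperbolic geometry of $\bar P$. A Cartan--Hadamard-type theorem for hyperbolic metric spaces, applied to the simply connected space $\bar X$ with this uniformly locally hyperbolic geometry, then produces a global hyperbolicity constant $\bar\delta = \bar\delta(\delta_0,\Delta_0)$ for $\bar X$.

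For the action: the $G$-action on $X$ is proper by hypothesis, which combined with properness of each $H_i \leq G$ on $Y_i$ yields a proper $G$-action on $\dot X$, and this descends to a proper $\bar G$-action by isometries on $\bar X$. Under the additional cocompactness hypotheses, $X/G$ is compact and only finitely many $G$-orbits of cones are attached, so $\bar P = \dot X/G$ is compact; since $\bar X/\bar G = \bar P$, the action of $\bar G$ on $\bar X$ is cocompact, and a proper cocompact isometric action on a hyperbolic geodesic space forces the acting group to be word-hyperbolic.

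The main obstacle is the Cartan--Hadamard step. Even though the cone-off $\dot X$ is already proved hyperbolic in Section \ref{part cone-off}, the universal cover $\bar X$ is not $\dot X$ itself: it unfolds the orbifold structure at the cone apices via the developing map. Verifying that this unfolding preserves hyperbolicity with uniform constants is where the quantitative small cancellation numerology enters and fixes the thresholds $\delta_0$ and $\Delta_0$.
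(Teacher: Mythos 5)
Your proposal follows the paper's proof essentially step for step: the same two hyperbolicity theorems furnish the local models, the quotient $\dot X/G$ is endowed with an orbifold structure having cone-off charts and cone charts with isotropy $\stab{Y_i}/H_i$, the Cartan--Hadamard theorem for orbifolds yields the hyperbolic universal cover $\bar X$ with deck group $\bar G$ (identified via Van Kampen for orbifolds), and properness and cocompactness are handled identically. The only ingredients the paper makes explicit that you elide are the rescaling $X_\rho = \frac{2\pi \sinh r_0}{\rho}X$, which converts the ratio hypotheses $\delta/\rho \leq \delta_0$ and $\Delta/\rho \leq \Delta_0$ into the absolute thresholds of Theorems \ref{first hyperbolicity theorem} and \ref{second hyperbolicity theorem} and normalizes the injectivity radius above $2\pi\sinh r_0$, and the comparison of $|\ .\ |_{\dot X}$ with its induced length metric (Proposition \ref{comparison length metric metric}), which is needed before the Cartan--Hadamard theorem for length orbifolds can be invoked.
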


\begin{theovide}[cf Th. {\ref{asphericity universal cover}}]
	Under the same hypotheses, if $X$ is a $n$-dimen\-sional simplicial complex such that every closed ball of $X$ and of each $Y_i$ is contractible in its appropriate neighbourhood, then $\bar X$ is contractible.
\end{theovide}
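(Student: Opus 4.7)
The plan is to use the road map already laid out in the introduction: combine the hyperbolicity of $\bar X$ provided by Theorem \ref{very small cancellation} with the Rips-type criterion (Theorem \ref{aspherical hyperbolic complex}) stating that a simply-connected, locally contractible, hyperbolic simplicial complex is contractible. Since $\bar X$ is the universal cover of $\bar P$ it is simply connected, and Theorem \ref{very small cancellation} (whose hypotheses are assumed to hold) ensures that $\bar X$ is hyperbolic. Hence the whole task reduces to giving $\bar X$ a reasonable simplicial structure and verifying its local contractibility.

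First I would put a simplicial structure on $\bar X$. The space $\dot X$ is obtained from $X$ by attaching a cone of base $Y_i$ for each $i \in I$. After a suitable subdivision each $Y_i$ can be assumed to be a subcomplex of $X$, so that each cone carries a natural simplicial structure (join with the apex) that agrees with the one of $X$ along $Y_i$. This turns $\dot X$ into a simplicial complex; the $G$-action is simplicial, and the lifting to the universal cover $\bar X$ of $\bar P = \dot X / G$ inherits a simplicial structure as well. Its dimension is controlled by that of $X$ and of the cones.

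Next I would check local contractibility of $\bar X$. Using the orbifold presentation via the two types of charts (cones and cone-off), the developing map identifies a neighbourhood of every point of $\bar X$ either with an open subset of $X$ or with a neighbourhood of a point of some cone $\cyl(Y_i)$. In the first case, the assumption that every closed ball of $X$ is contractible in a larger ball directly supplies the contraction. In the second case, one uses the assumption on the $Y_i$'s: a closed ball in the cone $\cyl(Y_i)$ around a point at radius $r$ from the apex can be retracted through the radial cone structure onto the union of a ball in $Y_i$ (which is contractible in a neighbourhood by assumption) with the segment of rays joining it to the apex; sliding this union to the apex finishes the contraction. Points lying on the boundary between the $X$-part and a cone are handled by patching the two constructions along $Y_i$, using once more the local contractibility assumption in $Y_i$.

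The main obstacle I expect is this last verification: showing that the contraction of a closed ball of $\bar X$ really takes place inside a controlled larger ball, in particular near the cone apices and near points where the ball of $\bar X$ meets several translates of the $Y_i$'s. This is where the small cancellation hypotheses $\delta/\rho \leq \delta_0$ and $\Delta/\rho \leq \Delta_0$ (and the strong quasi-convexity of the $Y_i$'s) are essential: they guarantee that a ball of bounded radius in $\bar X$ only meets a single cone at a time, so that the local picture reduces to one of the two model cases above. Once local contractibility is established, the combination of hyperbolicity, simple connectedness and the Rips-type theorem \ref{aspherical hyperbolic complex} yields that $\bar X$ is contractible. \cqfd
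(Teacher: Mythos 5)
Your high-level strategy coincides with the paper's: hyperbolicity of $\bar X$ from Theorem \ref{very small cancellation}, local contractibility checked chart by chart through the developing map of the Cartan--Hadamard theorem, and then the Rips-type criterion of Theorem \ref{aspherical hyperbolic complex}. But the step you yourself flag as ``the main obstacle'' --- showing that a closed ball of $\bar X$ contracts \emph{inside a ball of only slightly larger radius} --- is exactly where your sketch has a genuine gap, and the resolution you propose does not work. First, the small cancellation constants do \emph{not} guarantee that a ball of radius $\leq 8(n+1)\bar\delta$ meets a single cone: many $Y_i$'s may pass through a small ball of $X$; the $\Delta$-condition only bounds their pairwise overlaps. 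The paper does not need such a claim: Proposition \ref{contracting ball in the cone-off} handles a ball meeting arbitrarily many cones by first retracting, for each $i$, the piece $\bar B(x,r)\cap C(Y_i)$ onto the base $Y_i$ while fixing $\bar B(x,r)\cap X$ pointwise (Proposition \ref{contracting ball to the base in the cone}), and only then contracting the resulting subset of $X$.

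Second, your contraction inside a cone chart --- radially retract onto a ball of $Y_i$ union the rays to the apex, then ``slide to the apex'' --- leaves the required neighbourhood: when the apex is not in the ball, sliding to it moves points by up to $r_0$, far more than the allowed $r+4\bar\delta$. This is precisely the pitfall discussed in the remark before Proposition \ref{contracting ball to the base in the cone} (see Figure \ref{shape of a ball}): the naive radial projection $p$ does not stay near $\bar B(x,r_1)$. The fix is to contract \emph{horizontally first}, using a homotopy of $Y_i$ that increases distances to the centre by at most $l$, and only then project to the base; this is why the hypothesis is not mere local contractibility of $X$ and the $Y_i$'s but the quantitative condition H($l$) with $l=\frac{\bar\delta\rho}{2\pi\sinh r_0}$ (the precise meaning of ``contractible in its appropriate neighbourhood''). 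Your proposal never invokes this quantitative control, and without it the local contractions cannot be confined to $B(\bar x, r+4\bar\delta)$, so Theorem \ref{aspherical hyperbolic complex} cannot be applied.
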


\rem In some cases, the space $\bar P = \bar X / \bar G$ may be endowed with a sharper geometry than the hyperbolic one. 
For instance, M. Gromov constructed in a similar way CAT(-1) polyhedra in order to produce infinite torsion groups (see \cite[Chap 12]{GhyHar90}).
In \cite{Gro01} M. Gromov introduced the notion of {CAT(-1,$\epsilon$)} spaces, ae $\epsilon$-perturbated version of CAT(-1)-spaces.
This provides another framework to study small cancellation constructions that is not asymptotic.

\paragraph{} In Section \ref{part examples}, we explain how to construct examples of rotation families that satisfy the small cancellation assumptions. 
To that end, we use the geometry of lattices and a result of N. Bergeron about the profinite topology of finitely generated linear groups \cite{Ber00}.
This leads to these new examples of aspherical polyhedra.

\paragraph{Question :}
The small cancellation for rotating families provides a framework to study quotient groups which looks very similar to the usual small cancellation.
However the groups obtained in this way may have very different properties. 
For instance, a usual $C'\left( \frac 16 \right)$ small cancellation group acts properly discontinuously and co-compactly on a CAT(0)-cubical complex. (see \cite[Th. 1.2]{Wis04}).
In particular it cannot have Kazhdan's property (T) unless it is finite and cyclic.
On the other hand, M. Gromov used in \cite{Gro03} (see also \cite{ArzDel00}) the small cancellation theory with graphs in order to construct Kazhdan groups.
To that end he embedded expanding graphs in the Cayley graph of the group.
We wonder if it is possible to construct other examples of Kazhdan groups using the small cancellation theory with rotating families.
In particular we are interested in the following example.
Let $G$ be, as previously, a real lattice of $SU(n,1)$ and $H$ the subgroup $G \cap SO(n,1)$. 
Does $\bar G = G / \ll H \gg$ have the Kazhdan property (T) ?

\paragraph{Acknowledgements.} I am grateful to Thomas Delzant for his invaluable help and advice during this work. Many thanks go to Nicolas Bergeron who explained me how to use the profinite topology.  I thank also the referee for many useful comments and corrections.
\section{Hyperbolic spaces}
\label{part hyperbolic spaces}

Let $X$ be a metric space.
If $x$ and $x'$ are two points of $X$, we denote by $\dist X x {x'}$ (or simply $\distX x {x'}$) the distance between them. 
Although it may not be  unique, we denote by $\geo x {x'}$ a geodesic joining $x$ and $x'$.
Given a base point $x$, the Gromov product of two points $y$ and $z$ is defined by 
\begin{displaymath}
	\gro y z x = \frac 12 \Big( \distX x y + \distX x z - \distX y z \Big)
\end{displaymath}
Let $\delta$ be a non negative  number. The space $X$ is $\delta$-hyperbolic if for all $x,y,z,t \in X$, we have
\begin{math}
	\gro x z t \geq \min \left\{ \gro x y t , \gro y z t \right\} - \delta
\end{math}.
$\R$-trees are very special examples of hyperbolic spaces. 

\begin{pref}[{\cite[Chap 2. Prop 6]{GhyHar90}} or  {\cite[Chap. 3 Th. 4.1]{CooDelPap90}}]

	An $\R$-tree is a geodesic space such that  every two points are connected by a unique topological arc. A metric space is an $\R$-tree if and only if it is geodesic and 0-hyperbolic.
\end{pref}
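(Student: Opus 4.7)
The plan is to prove the two implications separately.

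For the forward direction, assume $X$ is an $\R$-tree. Since $X$ is geodesic by hypothesis, only $0$-hyperbolicity needs to be checked. Given four points $x_1,x_2,x_3,x_4$, consider the union of the unique arcs pairwise connecting them. Uniqueness forces any two such arcs to coincide on their overlap, so a short case analysis shows that this whole union is isometric to a finite metric tree spanned by these four points. On such a tree, the four-point condition is elementary: among the three sums $d(x_1,x_2)+d(x_3,x_4)$, $d(x_1,x_3)+d(x_2,x_4)$ and $d(x_1,x_4)+d(x_2,x_3)$, the two largest coincide, which is exactly $\delta$-hyperbolicity with $\delta=0$.

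For the reverse direction, assume $X$ is geodesic and $0$-hyperbolic. Two substeps are needed: (a) geodesics between any two points are unique, and (b) every embedded arc from $x$ to $y$ coincides, as a subset of $X$, with the unique geodesic. Substep (a) falls out of the four-point condition directly: given two geodesics $\gamma_1,\gamma_2$ from $x$ to $y$ parametrised by arclength, fix $t\in[0,d(x,y)]$ and set $p=\gamma_1(t)$, $q=\gamma_2(t)$; both sums $d(x,p)+d(q,y)$ and $d(x,q)+d(y,p)$ equal $d(x,y)$, so the four-point condition yields $d(p,q)+d(x,y)\leq d(x,y)$, hence $p=q$.

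For substep (b), which is the main technical point, I would introduce the median retraction $\pi\colon X\to[x,y]$ sending $z$ to the unique point of $[x,y]$ at distance $\gro y z x$ from $x$. From the four-point condition, one verifies that $\pi$ is $1$-Lipschitz, fixes $[x,y]$ pointwise, and satisfies $d(z,\pi(z))=\gro x y z$. Let $\alpha\colon[0,1]\to X$ be an embedded arc from $x$ to $y$; if $\alpha$ ever leaves $[x,y]$, select a maximal open subinterval $(a,b)$ on which it does so. Injectivity of $\alpha$ forces $p:=\alpha(a)\neq\alpha(b)=:q$, and the restriction of $\alpha$ to $[a,b]$ becomes an embedded arc $\beta$ from $p$ to $q$ meeting the sub-geodesic $[p,q]\subset[x,y]$ only at its endpoints. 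The tripod structure of geodesic triangles in a $0$-hyperbolic space forces every geodesic from $p$, and from $q$, to an interior point $\beta(s)$ to pass through the median of $p,q,\beta(s)$, which lies on $[p,q]$. Applying this to sub-arcs of $\beta$, or extracting a minimal counterexample, should produce an interior point of $\beta$ which lies on $[p,q]$, contradicting the choice of $(a,b)$. The hardest part is precisely this final step: converting the tripod structure coming from $0$-hyperbolicity into a topological obstruction to non-geodesic excursions of an embedded arc; the rest of the argument is routine manipulation of Gromov products.
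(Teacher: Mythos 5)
The paper does not actually prove this proposition-definition; it quotes it from Ghys--de la Harpe and Coornaert--Delzant--Papadopoulos, so your attempt has to be judged on its own. The forward direction and substep (a) of the reverse direction are correct (for (a), note that the four-point inequality in the form $d(p,q)+d(x,y)\leq\max\{d(p,x)+d(q,y),d(p,y)+d(q,x)\}$ is the one you are invoking, and it is indeed equivalent to $0$-hyperbolicity as defined via Gromov products). The genuine gap is exactly where you flag it, in substep (b): the sentence ``the tripod structure forces every geodesic from $p$ and from $q$ to an interior point $\beta(s)$ to pass through the median of $p,q,\beta(s)$, which lies on $[p,q]$'' is true but does not produce an interior point of $\beta$ \emph{on} $[p,q]$ --- the median of $p,q,\beta(s)$ is a point of $[p,q]$, not a point of the arc $\beta$, and nothing you have written rules out that $\beta$ stays at a fixed positive distance from $[p,q]$ on all of $(a,b)$. ``Extracting a minimal counterexample'' does not obviously work either: the excursions of $\beta$ off $[p,q]$ form an open set with no smallest component, and there is no descent structure to exploit.

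The missing ingredient is the lemma that in a geodesic $0$-hyperbolic space with unique geodesics, \emph{every} continuous path $\beta$ from $p$ to $q$ contains $[p,q]$ in its image (injectivity is not needed for this step). This is proved by connectedness rather than by medians: if $w\in[p,q]$ were avoided, then $\epsilon:=d(w,\mathrm{im}\,\beta)>0$ since the image is compact; the $0$-thinness of triangles gives $[p,\beta(t)]\subset[p,\beta(t')]\cup[\beta(t'),\beta(t)]$ for all $t,t'$, so whenever $w\in[p,\beta(t)]$ but $w\notin[p,\beta(t')]$ one gets $w\in[\beta(t'),\beta(t)]$ and hence $d(\beta(t),\beta(t'))\geq 2\epsilon$; thus $A=\{t:\ w\in[p,\beta(t)]\}$ and its complement are both open, $A$ contains the terminal endpoint but not the initial one, contradicting connectedness of the parameter interval. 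With this lemma your maximal-excursion reduction closes instantly: the sub-arc $\beta$ from $p$ to $q$ meets $[p,q]$ only at its endpoints, yet must contain all of $[p,q]$, forcing $[p,q]\subseteq\{p,q\}$, which is absurd for $p\neq q$. (One then still needs the small extra argument that an \emph{injective} path containing $[x,y]$ equals $[x,y]$, e.g.\ by noting that $\alpha^{-1}$ restricted to $[x,y]$ is a continuous injection of an interval into $[0,1]$ whose image must be all of $[0,1]$.) So the architecture of your proof is right, but the step you correctly identify as the hardest is the one that is actually missing, and the tool needed to fill it is a connectedness argument, not the median computation you propose.
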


\begin{defi}[Quasi-isometry]
	Let $\eta$ be a non negative number. A $(1,\eta)$-quasi-isometry is a map $f : X \rightarrow Y$ between two metric spaces such that for all $x, x' \in X$, we have 
\begin{displaymath}
	\distX {x}{x'} - \eta \leq \distX {f(x)}{f(x')} \leq \distX {x}{x'} + \eta
\end{displaymath}
\end{defi}

The next result is a very easy case of the stability of quasi-geodesics.
An asymptotic proof of this fact for a general $(\lambda, k)$-quasi-isometry can be found in \cite{Cou}.

\begin{prop}
\label{quasi-isometry hyperbolicity}
	Let $\delta$ be a non negative number. 
	For all $\delta' > \delta$, there exists $\eta >0$ satisfying the following property. 
	Let $X$  be  a metric space and $Y$ a $\delta$-hyperbolic space. 
	If there exists a $(1,\eta)$-quasi-isometry from $X$ to $Y$, then $X$ is $\delta'$-hyperbolic.
\end{prop}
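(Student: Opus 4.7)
The proof plan is essentially a bookkeeping exercise: transport the four-point hyperbolicity inequality from $Y$ back to $X$ via $f$, keeping track of the error introduced by the $(1,\eta)$-quasi-isometry.

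First I would check how much Gromov products can be distorted by $f$. Since $f$ satisfies
\begin{displaymath}
\bigl| \distX{f(a)}{f(b)} - \distX{a}{b} \bigr| \leq \eta
\end{displaymath}
for every pair of points, and the Gromov product $\gro y z t$ is a half-sum of three signed pairwise distances, for any $y,z,t \in X$ we get
\begin{displaymath}
\bigl| \gro{f(y)}{f(z)}{f(t)} - \gro{y}{z}{t} \bigr| \leq \tfrac{3}{2}\eta.
\end{displaymath}

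Next, given $x,y,z,t \in X$, I would apply $\delta$-hyperbolicity of $Y$ to the four points $f(x),f(y),f(z),f(t)$, obtaining
\begin{displaymath}
\gro{f(x)}{f(z)}{f(t)} \geq \min\bigl\{\gro{f(x)}{f(y)}{f(t)},\,\gro{f(y)}{f(z)}{f(t)}\bigr\} - \delta,
\end{displaymath}
and then substitute each of the three Gromov products using the estimate above. Each substitution costs at most $\tfrac{3}{2}\eta$, so combining them yields
\begin{displaymath}
\gro{x}{z}{t} \geq \min\bigl\{\gro{x}{y}{t},\,\gro{y}{z}{t}\bigr\} - \delta - 3\eta.
\end{displaymath}

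Finally, given $\delta' > \delta$, I would choose $\eta = (\delta'-\delta)/3$ (or any smaller positive value); the inequality above then reads $\gro{x}{z}{t} \geq \min\{\gro{x}{y}{t},\gro{y}{z}{t}\} - \delta'$, which is exactly $\delta'$-hyperbolicity of $X$. There is no real obstacle here; the only thing to be careful about is the factor of $3$ coming from the three Gromov products appearing in the four-point inequality (as opposed to the factor $\tfrac{3}{2}$ coming from the three pairwise distances in a single Gromov product). This is also why one cannot take $\delta' = \delta$: a genuine loss in the hyperbolicity constant is unavoidable in this argument unless $f$ is an isometry.
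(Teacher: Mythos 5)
Your argument is correct and is essentially identical to the paper's proof: both establish the $\tfrac{3}{2}\eta$ distortion bound on Gromov products, transport the four-point inequality from $Y$ back to $X$ for a total loss of $\delta + 3\eta$, and conclude by taking $\eta \leq (\delta'-\delta)/3$. No differences worth noting.
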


\begin{proof}
	Let $f : X \rightarrow Y$ be a $(1,\eta)$-quasi-isometry.
	For all $x,y,z \in X$ we have 
	\begin{displaymath}
		\gro {f(x)}{f(y)}{f(z)} -\frac 3 2 \eta \leq \gro x y z \leq \gro {f(x)}{f(y)}{f(z)} +\frac 3 2 \eta
	\end{displaymath}
	It follows that for all $x,y,z,t \in X$,
	\begin{eqnarray*}
		\gro x z t & \geq &\gro {f(x)}{f(z)}{f(t)} - \frac 32 \eta \\
						& \geq &\min \left\{  \gro {f(x)}{f(y)}{f(t)},  \gro {f(y)}{f(z)}{f(t)} \right\} - \delta - \frac 32 \eta \\
						& \geq &\min \left\{ \gro x y t, \gro y z t\right\} - (\delta  + 3 \eta) \\
	\end{eqnarray*}
	Hence $X$ is $(\delta + 3 \eta)$-hyperbolic.
\end{proof}

\subsection{Ultra-limits of hyperbolic spaces}

\paragraph{}Let us recall the definition of the ultra-limit of a sequence of pointed metric spaces and its link with hyperbolicity. For more details about this point of view see \cite{Dru01}, \cite{Dru02} or \cite{DruSap05}.

\paragraph{}A non-principal ultra-filter is a finite additive map $\omega : \mathcal P (\N) \rightarrow \left\{0, 1\right\}$ which vanishes on every finite subset of $\N$ and such that $\omega(\N) =1$. 
A property $\mathcal P_n$ is true \oasly\ if 
\begin{math}
	\omega \big( \left\{ n \in \N / \mathcal P_n \text{ is true }\right\} \big) = 1
\end{math}.
A real sequence $\left( u_n\right)$ is \oeb\ if there exists $M \in \R$, such that $\left|u_n\right| \leq M$, \oasly.
If $l$ is a real number, we say that the $\omega$-limit of $\left( u_n\right)$ is $l$ and write $\limo u_n = l$, if for all $\epsilon >0$, $\distX {u_n} l \leq \epsilon$, \oasly. 
In particular, any \oeb\ sequence admits an $\omega$-limit (cf. \cite{Bou71}).

\paragraph{} Let $\left( X_n, x^0_n \right)$ be a sequence of pointed metric spaces. We consider
\begin{displaymath}
	\Pi_\omega X_n = \left\{ (x_n) / \forall n\in \N,\ x_n\in X_n \text{ and } \left(\distX {x^0_n}{x_n}\right) \text{ is \oeb.}\right\}
\end{displaymath}

We endow this space with a pseudo-metric defined as follows:
\begin{displaymath}
	\distX {(x_n)}{(y_n)} = \limo \distX {x_n}{y_n}
\end{displaymath}

\begin{defi}[Ultra-limit of metric spaces]
	Let $\left(X_n, x^0_n\right)$ be  a sequence of pointed metric spaces and  $\omega$ a non-principal ultra-filter. The $\omega$-limit of $\left(X_n, x^0_n\right)$, denoted by $\limo \left(X_n, x^0_n\right)$ (or simply $\limo X_n$) is the quotient of $\Pi_\omega X_n$ by the equivalence relation which identifies the points at distance zero.
\end{defi}

The pseudo-distance on $\Pi_\omega X_n$ induces a distance on $\limo X_n$. 
\nota 

\begin{enumerate}
	\item If $(x_n)$ is an element of $\Pi_\omega X_n$, its image in $\limo X_n$ is denoted by $\limo x_n$.
	\item For all $n \in \N$, let $Y_n$ be a subset of $X_n$. The set $\limo Y_n$ is defined by 
	\begin{displaymath}
		\limo Y_n = \left\{ \limo y_n / \left(\distX {x_n^0} {y_n}\right) \text{ is \oeb\ and } y_n \in Y_n \text{ \oasly}\right\}
	\end{displaymath}
\end{enumerate}

\begin{prop}
	\label{ultra limit of hyperbolic spaces}
	Let $\omega$ be a non-principal ultra-filter. 
	Let $(\delta_n)$ be a sequence of non negative numbers which admits a $\omega$-limit $\delta$. 
	Let $\left(X_n, x^0_n\right)$  be a sequence of pointed metric spaces. 
	If for all $n \in \N$, $X_n$ is $\delta_n$-hyperbolic, then  the limit space $\limo X_n$ is $\delta$-hyperbolic.
\end{prop}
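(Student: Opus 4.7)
The plan is to unwind the definition of hyperbolicity at the level of representing sequences and push everything through the $\omega$-limit, using that the distance on $\limo X_n$ is tautologically the $\omega$-limit of the distances in $X_n$.

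First I would fix four points $\bar x, \bar y, \bar z, \bar t$ in $\limo X_n$, represented respectively by sequences $(x_n), (y_n), (z_n), (t_n)$ in $\Pi_\omega X_n$. Since each $X_n$ is $\delta_n$-hyperbolic, for every $n \in \N$ one has
\begin{displaymath}
\gro{x_n}{z_n}{t_n} \geq \min\left\{\gro{x_n}{y_n}{t_n},\, \gro{y_n}{z_n}{t_n}\right\} - \delta_n.
\end{displaymath}
The point is to take $\limo$ on both sides.

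The key elementary fact I would invoke (proving it on the fly if necessary) is that the $\omega$-limit commutes with the usual arithmetic operations on $\omega$-essentially bounded sequences: for such sequences $(a_n),(b_n)$ one has $\limo(a_n+b_n) = \limo a_n + \limo b_n$, $\limo(\alpha a_n) = \alpha \limo a_n$, and $\limo \min\{a_n, b_n\} = \min\{\limo a_n, \limo b_n\}$. Since the relevant sequences (the pairwise distances between $x_n, y_n, z_n, t_n$) are $\omega$-essentially bounded by hypothesis, the Gromov product passes to the limit:
\begin{displaymath}
\gro{\bar x}{\bar z}{\bar t} = \limo \gro{x_n}{z_n}{t_n},
\end{displaymath}
and similarly for the two Gromov products appearing in the minimum. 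Combined with $\limo \delta_n = \delta$ and the fact that $\omega$-limit preserves non-strict inequalities, taking the $\omega$-limit of the displayed hyperbolicity inequality yields
\begin{displaymath}
\gro{\bar x}{\bar z}{\bar t} \geq \min\left\{\gro{\bar x}{\bar y}{\bar t},\, \gro{\bar y}{\bar z}{\bar t}\right\} - \delta,
\end{displaymath}
which is exactly $\delta$-hyperbolicity of $\limo X_n$.

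There is essentially no serious obstacle here; the only thing to be slightly careful about is the compatibility of $\min$ with $\omega$-limits, which is not completely formal but follows immediately from the definition (given $\epsilon > 0$, the set of $n$ for which $|a_n - \limo a_n| \leq \epsilon$ and $|b_n - \limo b_n| \leq \epsilon$ has $\omega$-measure $1$, and on such $n$ one has $|\min\{a_n,b_n\} - \min\{\limo a_n,\limo b_n\}| \leq \epsilon$). Everything else is bookkeeping.
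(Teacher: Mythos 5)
Your proof is correct and follows exactly the same route as the paper: write the four points as $\omega$-limits of representing sequences, apply the hyperbolicity inequality in each $X_n$, and pass to the $\omega$-limit. The paper simply states "taking the $\omega$-limit" without spelling out the compatibility of $\limo$ with sums and $\min$, which you verify explicitly; this is a harmless elaboration, not a difference in approach.
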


\begin{proof}
	Let $x=\limo x_n$, $y= \limo y_n$, $z= \limo z_n$ and $t= \limo t_n$ be four points of $\limo X_n$. 
Since $X_n$ is $\delta_n$-hyperbolic, we have for all $n \in \N$,
\begin{math}
	\gro {x_n}{z_n}{t_n} \geq \min \left\{  \gro{x_n}{y_n}{t_n} , \gro {y_n}{z_n}{t_n}\right\} - \delta_n
\end{math}.
Taking the $\omega$-limit, we obtain 
\begin{math}
	\gro x z t \geq \min \left\{ \gro x y t , \gro y z t \right\} - \delta
\end{math}.
Thus $\limo X_n$ is $\delta$-hyperbolic.
\end{proof}

\begin{coro}
	Let $\omega$ be a non-principal ultra-filter and $(\delta_n)$ a sequence of non negative numbers such that $\limo \delta_n=0$.
	Let $\left(X_n, x^0_n\right)$ be a sequence of pointed geodesic spaces. If for all $n \in \N$, $X_n$ is $\delta_n$-hyperbolic, then  the limit space $\limo X_n$ is an $\R$-tree.
\end{coro}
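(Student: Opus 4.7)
My plan is to invoke the proposition-definition characterizing $\R$-trees: it suffices to show that $\limo X_n$ is geodesic and $0$-hyperbolic. The $0$-hyperbolicity is immediate from the preceding Proposition \ref{ultra limit of hyperbolic spaces} applied to the sequence $(\delta_n)$: the hypothesis $\limo \delta_n = 0$ gives that $\limo X_n$ is $0$-hyperbolic. So the only thing to verify is that $\limo X_n$ is geodesic.

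To show geodesicity, I would take two points $x = \limo x_n$ and $y = \limo y_n$ in $\limo X_n$, and set $d = \distX x y = \limo \distX{x_n}{y_n}$. For each $n$, since $X_n$ is geodesic, choose a geodesic $\gamma_n \colon [0, \distX{x_n}{y_n}] \to X_n$ from $x_n$ to $y_n$ and reparametrize it linearly as $\tilde\gamma_n \colon [0,1] \to X_n$, so that $\distX{\tilde\gamma_n(s)}{\tilde\gamma_n(t)} = |s-t|\cdot\distX{x_n}{y_n}$. For each $s \in [0,1]$ the sequence $(\tilde\gamma_n(s))$ lies in $\Pi_\omega X_n$ because $\distX{x^0_n}{\tilde\gamma_n(s)} \leq \distX{x^0_n}{x_n} + \distX{x_n}{y_n}$ is $\omega$-essentially bounded. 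Define $\gamma(s) = \limo \tilde\gamma_n(s)$ in $\limo X_n$; taking $\omega$-limits of the equalities above gives
\begin{displaymath}
    \distX{\gamma(s)}{\gamma(t)} = |s-t|\cdot d.
\end{displaymath}
Thus $\gamma$ is a constant-speed parametrization of a geodesic segment from $x$ to $y$ of length $d$, which after reparametrization by arc-length provides a geodesic in $\limo X_n$. Combining with the $0$-hyperbolicity, the proposition-definition concludes that $\limo X_n$ is an $\R$-tree.

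I do not expect a real obstacle here: the only mild subtlety is checking that the chosen sequences $(\tilde\gamma_n(s))_n$ actually belong to $\Pi_\omega X_n$ so that their $\omega$-limits are well defined, which the triangle inequality handles at once. Everything else is routine passage to the $\omega$-limit of uniform relations satisfied $\omega$-almost surely.
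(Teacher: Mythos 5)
Your proof is correct and follows the same structure as the paper's: $0$-hyperbolicity from Proposition \ref{ultra limit of hyperbolic spaces} with $\limo \delta_n = 0$, geodesicity of the ultra-limit, and then the characterization of $\R$-trees as geodesic $0$-hyperbolic spaces. The only difference is that the paper simply cites a reference for the fact that an ultra-limit of geodesic spaces is geodesic, whereas you supply the (standard and correct) argument by taking $\omega$-limits of constant-speed parametrizations.
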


\begin{proof}
	The $\omega$-limit of a sequence of geodesic spaces is still geodesic (cf. \cite{Pap96}). 
	It follows that $\limo X_n$ is a geodesic, 0-hyperbolic metric space. Hence $\limo X_n$ is an $\R$-tree.
\end{proof}

\begin{prop}
\label{reverse ultra limit of hyperbolic space}
	Let $\omega$ be a non-principal ultra-filter and $\delta$ a non negative number.
	Let $\left(X_n, x^0_n\right)$ be a sequence of pointed metric spaces  whose diameters are bounded. 
	If $\limo X_n$ is $\delta$-hyperbolic, then for all $\delta' > \delta$, $X_n$ is $\delta'$-hyperbolic \oasly.
	In particular there exists $n \in \N$ such that $X_n$ is $\delta'$-hyperbolic.
\end{prop}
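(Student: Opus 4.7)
The plan is to argue by contradiction, using the ultra-limit construction in reverse of Proposition~\ref{ultra limit of hyperbolic spaces}. I would suppose the conclusion fails, so that the set $A = \{n \in \N : X_n \text{ is not } \delta'\text{-hyperbolic}\}$ satisfies $\omega(A) = 1$. Unfolding the negation of the four-point hyperbolicity condition, for each $n \in A$ I pick points $x_n, y_n, z_n, t_n \in X_n$ with
\begin{displaymath}
\gro{x_n}{z_n}{t_n} < \min\left\{\gro{x_n}{y_n}{t_n},\, \gro{y_n}{z_n}{t_n}\right\} - \delta';
\end{displaymath}
for the remaining indices $n \notin A$ I simply set the four points equal to $x^0_n$.

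Next I would lift these sequences to the ultra-limit. Since the diameters $\diam(X_n)$ are uniformly bounded, each of the sequences $(\distX{x^0_n}{x_n})$, $(\distX{x^0_n}{y_n})$, $(\distX{x^0_n}{z_n})$, $(\distX{x^0_n}{t_n})$ is \oeb, and so they represent points $x = \limo x_n$, $y = \limo y_n$, $z = \limo z_n$, $t = \limo t_n$ of $\limo X_n$. Passing the above strict inequality to the $\omega$-limit (it holds \oasly\ since $\omega(A) = 1$) yields only the non-strict inequality
\begin{displaymath}
\gro{x}{z}{t} \leq \min\left\{\gro{x}{y}{t},\, \gro{y}{z}{t}\right\} - \delta'.
\end{displaymath}

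The decisive step is then to exploit the strict gap $\delta' > \delta$: the last inequality forces $\gro{x}{z}{t} < \min\{\gro{x}{y}{t}, \gro{y}{z}{t}\} - \delta$, which contradicts the $\delta$-hyperbolicity of $\limo X_n$. The ``in particular'' clause is immediate, since any subset of $\N$ of $\omega$-measure one is non-empty.

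There is no genuine obstacle here; the only delicate point is the strict-versus-non-strict inequality bookkeeping. The negation of $\delta'$-hyperbolicity must be recorded as a \emph{strict} inequality so that, after passing to the $\omega$-limit (where strict inequalities can only be preserved as non-strict ones), the gap $\delta' > \delta$ is still available to restore strictness and trigger the contradiction with hyperbolicity of the limit.
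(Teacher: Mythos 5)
Your proof is correct and follows essentially the same route as the paper: negate the conclusion for a single $\delta'>\delta$, extract $\omega$-almost surely a violating quadruple, pass to the ultra-limit (using the bounded diameters to get genuine limit points), and use the gap $\delta'>\delta$ to turn the resulting non-strict inequality into a contradiction with the $\delta$-hyperbolicity of $\limo X_n$. Your explicit remark about strict versus non-strict inequalities is exactly the one delicate point, and you handle it as the paper does.
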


\begin{proof}
	Assume that the proposition is false. 
	Then $\limo X_n$ is $\delta$-hyperbolic and nevertheless there exists $\delta' > \delta$ such that $X_n$ is \oasly\ not $\delta'$-hyperbolic
	Thus we can find four sequences $(x_n)$, $(y_n)$, $(z_n)$ and $(t_n)$ satisfying the following properties: 
	\begin{enumerate}
		\item for all $n \in \N$, $x_n, y_n, z_n, t_n \in X_n$,
		\item $\gro {x_n}{z_n}{t_n} < \min \left\{  \gro{x_n}{y_n}{t_n} , \gro {y_n}{z_n}{t_n}\right\} - \delta'$, \oasly.
	\end{enumerate}
	Since $\left(\diam (X_n)\right)$ is bounded, these four sequences define four points of $\limo X_n$, respectively $x$, $y$, $z$ and $t$. After taking the $\omega$-limit in the previous inequality we obtain 
	\begin{displaymath}
		\gro x z t \leq \min \left\{ \gro x y t , \gro y z t \right\} - \delta' < \min \left\{ \gro x y t , \gro y z t \right\} - \delta
	\end{displaymath}
	Hence $\limo X_n$ is not $\delta$-hyperbolic. Contradiction.
\end{proof}

\subsection{Quasi-convexity}

If $X$ is a geodesic space, there is another way to characterize the hyperbolicity using geodesic triangles. 
Let $\delta$ be a non negative number. 
A geodesic triangle is $\delta$-thin if each one of its sides is contained in the $\delta$-neighbourhood of the union of the two others.

\begin{prop}[cf. {\cite[Chap. 1 Prop. 3.6]{CooDelPap90}} or {\cite[Chap. 3 \S  2]{GhyHar90}}]
	Let $\delta$ be a non negative number. 
	Consider a geodesic space $X$.
	\begin{enumerate}
		\item If $X$ is $\delta$-hyperbolic, then every geodesic triangle of $X$ is $4\delta$-thin.
		\item If every geodesic triangle of $X$ is $\delta$-thin, then $X$ is $8\delta$-hyperbolic.
	\end{enumerate}
\end{prop}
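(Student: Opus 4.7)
The proposition relates the two standard characterizations of hyperbolicity, so the plan is to argue each implication separately: the first via the iterated four-point inequality, and the second by combining thin-triangle estimates on two triangles sharing a side.

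For part (i), I would argue as follows. Fix a geodesic triangle with vertices $x, y, z$ and a point $p$ on $\geo x y$. Up to swapping the roles of $x$ and $y$, I may assume $\distX x p \leq \gro y z x$. Pick the unique point $q$ on $\geo x z$ with $\distX x q = \distX x p$; the goal is $\distX p q \leq 4\delta$. Iterating the four-point inequality (applying $\delta$-hyperbolicity twice) on the quadruple $(p, y, z, q)$ with basepoint $x$ gives
\[
\gro p q x \;\geq\; \min\bigl\{\gro p y x,\ \gro y z x,\ \gro z q x\bigr\} - 2\delta.
\]
Since $p$ lies on the geodesic $\geo x y$, one has $\gro p y x = \distX x p$; similarly $\gro z q x = \distX x q = \distX x p$, while $\gro y z x \geq \distX x p$ by choice of case. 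Unfolding $\gro p q x = \distX x p - \tfrac12 \distX p q$ then yields $\distX p q \leq 4\delta$, so $p$ sits in the $4\delta$-neighbourhood of $\geo x z$.

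For part (ii), the strategy is the converse: derive the Gromov four-point condition from the thin-triangle condition applied to two triangles sharing a side. Given four points $x, y, z, t$, I would consider the triangles $T_1 = [x, y, t]$ and $T_2 = [y, z, t]$, which share the side $\geo y t$. Assume without loss of generality $\gro x y t \leq \gro y z t$, and set $\alpha = \gro x y t$. Mark the point $a$ on $\geo x t$ at distance $\alpha$ from $t$, and the point $a'$ on $\geo y t$ at distance $\alpha$ from $t$; these are the tripod-contact points of $T_1$ lying on its sides adjacent to $t$. A direct application of the $\delta$-thin property to $T_1$, using that $a$ and $a'$ lie on the $t$-side of the internal points of $T_1$, bounds $\distX a {a'}$ by a small multiple of $\delta$ (this is the standard "insize" estimate). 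Since $\alpha \leq \gro y z t$, the point $a'$ still lies on the $t$-side of the internal point of $T_2$ on $\geo y t$, so $\delta$-thinness of $T_2$ forces $a'$ to be $\delta$-close to some point $b$ on $\geo z t$ (rather than on $\geo y z$, which is too far from $t$). Chaining these estimates produces a point $b$ on $\geo z t$ within a bounded multiple of $\delta$ of $a$, which translates into the required lower bound $\gro x z t \geq \alpha - 8\delta$.

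The main obstacle is the bookkeeping in (ii): ruling out that the $\delta$-thin neighbour of $a'$ lands on the wrong side $\geo y z$, and tracking the exact constant. The loss from $4\delta$ in (i) to $8\delta$ in (ii) reflects the fact that in (ii) one pays for a thin-triangle estimate on each of the two triangles $T_1$ and $T_2$, whereas (i) uses a single application of iterated hyperbolicity.
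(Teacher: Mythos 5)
The paper does not actually prove this proposition --- it is quoted from \cite{CooDelPap90} and \cite{GhyHar90} --- so your proposal can only be measured against the standard textbook argument, which is essentially what you reproduce. Part (i) is correct and complete as written: the case split via $\distX x p \le \gro y z x$ versus $\distX y p \le \gro x z y$ is legitimate because these two Gromov products sum to $\distX x y$, the two iterations of the four-point inequality give exactly $\gro p q x \ge \distX x p - 2\delta$, and unfolding yields $\distX p q \le 4\delta$.

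Part (ii) has the right architecture but two steps are asserted rather than proved, and one of them is not literally true. First, the ``insize'' bound: showing that the two internal points $a\in\geo x t$ and $a'\in\geo y t$ of $T_1$ are close requires the small combinatorial argument over all three internal points (thinness only tells you that each internal point is $2\delta$-close to \emph{one} of the other two, possibly the wrong one; connectivity of the resulting graph on three vertices then gives insize $\le 4\delta$). Since the final constant $8\delta$ depends on this value, you should carry it explicitly. Second, your claim that the $\delta$-thin neighbour of $a'$ in $T_2$ cannot land on $\geo y z$ ``which is too far from $t$'' fails in the borderline case: one only has $d(t,w)\ge \gro y z t$ for $w\in\geo y z$, while $\distX t {a'}=\gro x y t$, so when $\gro y z t \le \gro x y t + \delta$ the neighbour may perfectly well lie on $\geo y z$. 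This is repairable --- in that case $w$ satisfies $\distX y w \ge \distX y t - \gro x y t - \delta$, and substituting the path $x \to a \to w \to z$ into $\distX x z$ still gives $\gro x z t \ge \gro x y t - 3\delta$ --- but as written the exclusion is a gap, not a proof. With insize $4\delta$ and both cases treated, the chain $x\to a\to a'\to b\to z$ gives $\gro x z t \ge \gro x y t - 3\delta$, comfortably within the stated $8\delta$.
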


\begin{coro}
\label{8 delta thin quadrilater}
	Let $x$, $x'$, $y$ and $y'$ be four points of a geodesic, $\delta$-hyperbolic space $X$.  
	If $u$ is a point of $\geo{x}{x'}$ such that $\distX{x}u >\distX{x}{y} + 8 \delta$ and $\distX{x'}u >\distX{x'}{y'} + 8 \delta$, then $u$ lies in the $8\delta$-neighbourhood of $\geo{y}{y'}$.
\end{coro}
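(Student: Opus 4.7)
The strategy is the standard thin-quadrilateral argument: insert a diagonal to split the quadrilateral $xx'y'y$ into two triangles, then apply the $4\delta$-thin property (from the preceding proposition) twice, ruling out the undesirable cases using the two hypotheses on $|xu|$ and $|x'u|$.

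First I would consider the triangle $T_1$ with vertices $x$, $x'$, $y$ and sides $[x,x']$, $[x',y]$, $[y,x]$. Since $u$ lies on the side $[x,x']$ and $T_1$ is $4\delta$-thin, there exists a point $v$ on $[x,y] \cup [x',y]$ with $\distX{u}{v} \leq 4\delta$. If $v$ were on $[x,y]$, then
\begin{displaymath}
	\distX{x}{u} \leq \distX{x}{v} + \distX{v}{u} \leq \distX{x}{y} + 4\delta,
\end{displaymath}
contradicting the hypothesis $\distX{x}{u} > \distX{x}{y} + 8\delta$. Hence $v$ must lie on the diagonal $[x',y]$.

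Next I would bring in the triangle $T_2$ with vertices $x'$, $y'$, $y$ and sides $[x',y']$, $[y',y]$, $[y,x']$. The point $v$ lies on its side $[y,x']$, so by $4\delta$-thinness there is a point $w$ on $[x',y'] \cup [y',y]$ with $\distX{v}{w} \leq 4\delta$. If $w$ were on $[x',y']$, then
\begin{displaymath}
	\distX{x'}{u} \leq \distX{x'}{v} + \distX{v}{u} \leq \distX{x'}{w} + 4\delta + 4\delta \leq \distX{x'}{y'} + 8\delta,
\end{displaymath}
contradicting $\distX{x'}{u} > \distX{x'}{y'} + 8\delta$. Therefore $w$ lies on $[y,y']$, and $\distX{u}{w} \leq \distX{u}{v} + \distX{v}{w} \leq 8\delta$, which gives the conclusion.

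There is no real obstacle here, since the two hypotheses are tailored precisely to eliminate the two "bad" cases in the two successive applications of thinness. The only point to watch is the bookkeeping of the constants: each thinness step contributes a $4\delta$, and because the conclusion only needs one application of thinness in each of the two triangles, the total cost is $8\delta$ both in the distance bound and in the hypothesis threshold.
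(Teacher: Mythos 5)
Your proof is correct and follows essentially the same route as the paper: split the quadrilateral into two triangles along a diagonal (you use $\geo{x'}{y}$, the paper uses $\geo{x}{y'}$, which is the same argument by symmetry), apply $4\delta$-thinness twice, and use the two hypotheses to exclude the cases where the nearby point lands on $\geo{x}{y}$ or $\geo{x'}{y'}$. The constant bookkeeping is also handled the same way, with a total error of $8\delta$.
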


\begin{proof}
Since the triangles $\left[ x, y, y'\right]$ and $\left[ x, x', y'\right]$ are $4\delta$-thin, we can find a point $v$ in $\geo{x}{y} \cup \geo{y}{y'} \cup \geo{y'}{x'}$ such that $\distX u v \leq 8 \delta$.  
We will show that $v \in \geo y{y'}$.
Suppose that $v \in \geo{x}{y}$ (the case $v \in \geo{x'}{y'}$ is symmetric). 
The triangle inequality gives
\begin{displaymath}
	\distX {x}u \leq \distX v u + \distX{x} v \leq \distX{x}{y} +8 \delta
\end{displaymath}
Contradiction. 
Consequently, $u$ lies in the $8\delta$-neighourhood of $\geo{y}{y'}$.

\end{proof}

\begin{defi}[Quasi-convexity]
	Let $\alpha$ be a non negative number. A subset $Y$ of a geodesic metric space $X$ is $\alpha$-quasi-convex if every geodesic between two points of $Y$ is contained in the $\alpha$-neighbourhood of $Y$.
\end{defi}

\nota We denote by $Y^{+ \alpha}$ the $\alpha$-neighbourhood of $Y$.

\begin{prop}[compare {\cite[Lemma 2.2.2]{DelGro08}}]
\label{estimation diameter quasi convex part}
	Let $\delta, \alpha \geq 0$. Let  $X$ be a geodesic, $\delta$-hyperbolic space. If $Y$ and $Z$ are two $\alpha$-quasi-convex subsets of $X$, then for all $A \geq 0$, we have
	\begin{displaymath}
		\diam\left(Y^{+A}\cap Z^{+A}\right) \leq \diam \left( Y^{+\alpha + 10 \delta} \cap Z^{+\alpha+10 \delta} \right) + 2A + 20 \delta
	\end{displaymath}
\end{prop}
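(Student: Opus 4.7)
The plan is to reduce the problem to applying the quadrilateral corollary (Corollary \ref{8 delta thin quadrilater}): pick two points $p,q$ in $Y^{+A}\cap Z^{+A}$ realizing almost the diameter of that intersection, then show that a long middle portion of the geodesic $[p,q]$ actually lies inside $Y^{+\alpha+10\delta}\cap Z^{+\alpha+10\delta}$, whose diameter will then control $d(p,q)$ up to the additive error $2A+20\delta$.

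More precisely, I would pick $p,q\in Y^{+A}\cap Z^{+A}$ and choose projections $y_p,y_q\in Y$ with $d(p,y_p),d(q,y_q)\leq A$, and similarly $z_p,z_q\in Z$ at distance at most $A$ from $p,q$. Without loss of generality one may assume $d(p,q)> 2A+20\delta$, since otherwise the inequality is immediate. On a geodesic $[p,q]$ select the two points $u,u'$ at parameter $A+10\delta$ and $d(p,q)-A-10\delta$ from $p$. Then $d(p,u),d(q,u')=A+10\delta$ while $d(q,u),d(p,u')\geq A+10\delta$, so both $u$ and $u'$ satisfy the hypotheses of Corollary \ref{8 delta thin quadrilater} applied to the quadrilateral $(p,q,y_q,y_p)$: indeed $d(p,u)>d(p,y_p)+8\delta$ and $d(q,u)>d(q,y_q)+8\delta$, and symmetrically for $u'$. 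Hence $u$ and $u'$ lie in the $8\delta$-neighbourhood of $[y_p,y_q]$, which by $\alpha$-quasi-convexity of $Y$ is contained in $Y^{+\alpha+8\delta}\subset Y^{+\alpha+10\delta}$. Repeating the same argument with the quadrilateral $(p,q,z_q,z_p)$ gives $u,u'\in Z^{+\alpha+10\delta}$.

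Thus $u,u'\in Y^{+\alpha+10\delta}\cap Z^{+\alpha+10\delta}$, so
\begin{displaymath}
d(p,q)-2A-20\delta \;=\; d(u,u') \;\leq\; \diam\bigl(Y^{+\alpha+10\delta}\cap Z^{+\alpha+10\delta}\bigr).
\end{displaymath}
Taking the supremum over $p,q\in Y^{+A}\cap Z^{+A}$ yields the desired inequality.

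The only technical point is the bookkeeping of constants when applying the quadrilateral corollary twice (once for $Y$, once for $Z$): the slack $+10\delta$ I insert at each endpoint absorbs both the $A$ coming from the projection and the $8\delta$ coming from the thin-triangle error, while leaving a margin that turns the strict inequality in Corollary~\ref{8 delta thin quadrilater} into something routinely verifiable; no other difficulty arises.
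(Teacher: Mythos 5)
Your argument is correct and is essentially the paper's own proof: the paper likewise takes two points of $Y^{+A}\cap Z^{+A}$, marks the two points at distance $A+10\delta$ from each end of the geodesic between them, applies Corollary \ref{8 delta thin quadrilater} once for $Y$ and once for $Z$, and concludes via quasi-convexity that those two points lie in $Y^{+\alpha+10\delta}\cap Z^{+\alpha+10\delta}$. The only cosmetic difference is that the paper allows the approximating points of $Y$ to be at distance $A+\delta$ rather than $A$, which absorbs the same strict-inequality bookkeeping you mention at the end.
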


\begin{proof}
	Let $x$ and $x'$ be two points of $Y^{+A}\cap Z^{+A}$ and assume that $\distX {x} {x'} \geq 2A + 20 \delta$. We choose
	\begin{enumerate}
		\item two points $t$ and $t'$  of $\geo {x} {x'}$ such that $\distX {x} {t} = \distX {x'} {t'} = A +10 \delta$,
		\item two points $y$ and $y'$ of $Y$ such that $\distX {x}{y}, \distX{x'}{y'} \leq A+\delta$.
	\end{enumerate}
Applying Corollary \ref{8 delta thin quadrilater}, $t$ belongs to the $8 \delta$-neighbourhood of $\geo{y}{y'}$. 
Since $Y$ is $\alpha$-quasi-convex, $\geo{y}{y'}$ lies in the $\alpha$-neighbourhood of $Y$. 
Hence $t$ belongs to $Y^{+\alpha+10\delta}$. 
We prove in the same way that $t$ belongs to $Z^{+ \alpha +10 \delta}$. 
The same fact holds for $t'$. 
Thus
	\begin{displaymath}
		\distX {x}{x'} - 2A - 20 \delta = \distX {t}{t'} \leq \diam \left( Y^{+\alpha + 10 \delta} \cap Z^{+\alpha+10 \delta} \right) 
	\end{displaymath}
	The above inequality is true for all $x, x' \in Y^{+A}\cap Z^{+A}$, hence
	\begin{displaymath}
		\diam\left(Y^{+A}\cap Z^{+A}\right) \leq \diam \left( Y^{+\alpha + 10 \delta} \cap Z^{+\alpha+10 \delta} \right) + 2A + 20 \delta
	\end{displaymath}
\end{proof}

\begin{coro}
\label{limit diameter intersection}
	Let $\omega$ be a non-principal ultra-filter, and $(\delta_n)$ a sequence of real numbers such that $\limo \delta_n = 0$.  For all $n \in \N$, let $\left(X_n, x^0_n\right)$ be a pointed, geodesic, $\delta_n$-hyperbolic space and $Y_n$, $Z_n$ two $10 \delta_n$-quasi-convex subsets of $X_n$. Let $X = \limo \left(X_n, x^0_n\right)$, and $Y= \limo Y_n$, $Z = \limo Z_n$. We have
	\begin{displaymath}
		\diam(Y\cap Z) \leq \limo \diam \left( Y_n^{+20 \delta_n}\cap Z_n^{+20 \delta_n}\right)
	\end{displaymath}
\end{coro}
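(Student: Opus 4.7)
The plan is to apply Proposition \ref{estimation diameter quasi convex part} levelwise in each $X_n$ and then pass to the $\omega$-limit. The whole point is that the error term $2A + 20\delta$ in that proposition becomes negligible when one chooses the parameter $A$ small and uses $\limo \delta_n = 0$.

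First I would fix $\epsilon > 0$ and two points $y, z \in Y \cap Z$. The key observation is that each such point admits a representing sequence that lies in $Y_n^{+\epsilon} \cap Z_n^{+\epsilon}$ $\omega$-as. Indeed, because $y$ belongs to both $Y$ and $Z$ one may write $y = \limo y_n = \limo y'_n$ with $y_n \in Y_n$ and $y'_n \in Z_n$ $\omega$-as; two representations of the same point necessarily satisfy $\limo \distX{y_n}{y'_n} = 0$, so $y_n \in Y_n \cap Z_n^{+\epsilon} \subset Y_n^{+\epsilon} \cap Z_n^{+\epsilon}$ $\omega$-as. The same argument applies to $z$. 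Hence
$$\distX{y_n}{z_n} \leq \diam\bigl(Y_n^{+\epsilon} \cap Z_n^{+\epsilon}\bigr), \quad \omega\text{-as.}$$

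Next I would apply Proposition \ref{estimation diameter quasi convex part} to $X_n$ with $\alpha = 10\delta_n$ (by hypothesis each $Y_n$ and $Z_n$ is $10\delta_n$-quasi-convex) and $A = \epsilon$:
$$\diam\bigl(Y_n^{+\epsilon} \cap Z_n^{+\epsilon}\bigr) \leq \diam\bigl(Y_n^{+20\delta_n} \cap Z_n^{+20\delta_n}\bigr) + 2\epsilon + 20\delta_n, \quad \omega\text{-as.}$$
Chaining the two inequalities and passing to the $\omega$-limit, using $\limo \delta_n = 0$, gives
$$\distX{y}{z} \leq \limo \diam\bigl(Y_n^{+20\delta_n} \cap Z_n^{+20\delta_n}\bigr) + 2\epsilon.$$
Letting $\epsilon \to 0$ and taking the supremum over pairs $y, z \in Y \cap Z$ yields the claim. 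If the sequence $\diam(Y_n^{+20\delta_n} \cap Z_n^{+20\delta_n})$ fails to be $\omega$-essentially bounded, one interprets its $\omega$-limit as $+\infty$ and the inequality is vacuous.

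I expect the only mildly delicate step to be the lifting of a point of $Y \cap Z$ to a representing sequence that simultaneously approximates $Y_n$ and $Z_n$; this is the standard fact that two $\omega$-limit representations of the same point differ by a null sequence, applied symmetrically to the representations coming from $Y$ and from $Z$. Everything else is a mechanical combination of Proposition \ref{estimation diameter quasi convex part} with the continuity of the distance under $\omega$-limits.
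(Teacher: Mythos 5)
Your proof is correct and follows essentially the same route as the paper: lift two points of $Y \cap Z$ to representatives lying in $Y_n \cap Z_n^{+A}$ $\omega$-as, apply Proposition \ref{estimation diameter quasi convex part} with $\alpha = 10\delta_n$ levelwise, take the $\omega$-limit, and let the auxiliary parameter tend to $0$. The only difference is cosmetic: you spell out the justification that two representing sequences of the same limit point differ by a null sequence, which the paper leaves implicit.
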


\begin{proof}
	Let $x$ and $x'$ be two points of $Y\cap Z$. Since $x,x' \in Y$, we can find two sequences $(x_n)$ and $(x'_n)$ such that $x=\limo x_n$, $x'= \limo x'_n$ and $x_n, x'_n \in Y_n$ \oasly. Moreover $x$ and $x'$ belong to $Z$, thus if $A>0$ is given $x_n$ and $x'_n$ belong to $Z_n^{+A}$ \oasly. Using Proposition \ref{estimation diameter quasi convex part}, we  have
	\begin{displaymath}
		\distX {x_n}{x'_n} \leq \diam\left(Y_n^{+A}\cap Z_n^{+A}\right) \leq \diam\left(Y_n^{+20 \delta_n}\cap Z_n^{+20 \delta_n}\right) +2A + 20 \delta_n
	\end{displaymath}
	By taking the $\omega$-limit, we obtain $\distX x {x'} \leq \limo \diam\left(Y_n^{+20 \delta_n}\cap Z_n^{+20 \delta_n}\right) +2A$.
	This inequality is true for all $A>0$ and $x,x' \in Y\cap Z$, thus 
	\begin{displaymath}
		\diam(Y\cap Z) \leq \limo \diam \left( Y_n^{+20 \delta_n}\cap Z_n^{+20 \delta_n}\right)
	\end{displaymath}
\end{proof}

We need in Section \ref{part cone-off} a little stronger condition than the quasi-convexity.
\begin{defi}
	Let $X$ be a $\delta$-hyperbolic space. 
	A subset $Y$ of $X$ is strongly quasi-convex if for all $x,x' \in Y$ there exist $p,p' \in Y$ such that $\distX p x  \leq 10 \delta$ , $\distX {p'}{x'} \leq 10 \delta$ and the path $\geo x p \cup \geo p {p'} \cup \geo {p'}{x'}$ lies in $Y$.
\end{defi}

\rem Since any geodesic triangle of $X$ is $4\delta$-thin, any strongly quasi-convex space is $10 \delta$-quasi-convex.
Given a $10 \delta$-quasi-convex subset $Y$ of $X$, there is a way to find a subset of $X$, a little larger than $Y$, that is strongly quasi-convex. 
To that end, we define the cylinder of a subset.

\begin{defi}
Let $Y$ be a subset of a geodesic $\delta$-hyperbolic space $X$. 
The cylinder of $Y$, denoted by $\cyl(Y)$, is the set of all points which are in the $10 \delta$-neighbourhood of a geodesic of $X$ joining two points of $Y$.
\end{defi}

\begin{lemm}
\label{cylinder quasi-convex}
	Let $Y$ be a $10 \delta$-quasi-convex subset of a geodesic, $\delta$-hyperbolic space $X$. The set $\cyl(Y)$ is contained in  $Y^{+20 \delta}$ and is strongly quasi-convex.
\end{lemm}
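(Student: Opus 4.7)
The plan is to prove the two assertions separately, using only quasi-convexity of $Y$ for the first part and thin-quadrilateral arguments for the second.

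For the inclusion $\cyl(Y) \subseteq Y^{+20\delta}$: take $x \in \cyl(Y)$ and unpack the definition to produce $y_1, y_2 \in Y$ and a point $p \in \geo{y_1}{y_2}$ with $\distX x p \leq 10\delta$. Since $Y$ is $10\delta$-quasi-convex, $p$ lies in $Y^{+10\delta}$, so a point $q \in Y$ within $10\delta$ of $p$ exists; the triangle inequality then gives $\distX x q \leq 20\delta$. This step is essentially immediate.

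For the strong quasi-convexity of $\cyl(Y)$, I would proceed as follows. Given $x, x' \in \cyl(Y)$, choose geodesics $\geo{y_1}{y_2}$ and $\geo{y_1'}{y_2'}$ (with $y_i, y_j' \in Y$) and points $p \in \geo{y_1}{y_2}$, $p' \in \geo{y_1'}{y_2'}$ with $\distX x p \leq 10\delta$ and $\distX{x'}{p'} \leq 10\delta$. These $p, p'$ manifestly lie in $\cyl(Y)$, and the two outer segments $\geo x p$ and $\geo{p'}{x'}$ are trivially inside $\cyl(Y)$ because every point on them sits within $10\delta$ of $p \in \geo{y_1}{y_2}$ or of $p' \in \geo{y_1'}{y_2'}$.

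The heart of the argument is showing that the middle segment $\geo p{p'}$ also lies in $\cyl(Y)$. Here I would look at the geodesic quadrilateral with vertices $p, y_2, y_1', p'$, split it along the diagonal $\geo p{y_1'}$ into two $4\delta$-thin triangles, and conclude that every point $q \in \geo p{p'}$ is within $8\delta$ of the polygonal path $\geo p{y_2} \cup \geo{y_2}{y_1'} \cup \geo{y_1'}{p'}$. Each of these three pieces is contained in a geodesic between two points of $Y$: the first two in $\geo{y_1}{y_2}$ and $\geo{y_2}{y_1'}$ respectively, and the third in $\geo{y_1'}{y_2'}$. Therefore $q$ sits within $8\delta$ of some geodesic between points of $Y$, and since $8\delta \leq 10\delta$, we get $q \in \cyl(Y)$.

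The main obstacle I expect is precisely this last step: one needs the $10\delta$ margin in the definition of the cylinder to be strictly larger than the $8\delta$ slack produced by quadrilateral thinness, which is what makes the inclusion $\geo p{p'} \subseteq \cyl(Y)$ work rather than merely $\geo p{p'} \subseteq \cyl(Y)^{+\text{const}\cdot\delta}$. Once that numerical comparison is in hand, the rest of the argument is essentially a careful bookkeeping of geodesic pieces.
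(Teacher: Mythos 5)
Your proof is correct and follows essentially the same route as the paper: the same triangle-inequality argument for the inclusion $\cyl(Y)\subset Y^{+20\delta}$, and the same decomposition of $\geo{p}{p'}$ via a $8\delta$-thin quadrilateral whose sides are subsegments of geodesics between points of $Y$ (the paper uses the corner $y_2'$ where you use $y_1'$, which is immaterial). The key numerical point you flag, $8\delta\leq 10\delta$, is exactly the one the paper relies on.
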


\begin{proof}
	By definition of quasi-convexity, any geodesic joining two points of $Y$ lies in $Y^{+10 \delta}$. 
	It follows that $\cyl(Y) \subset Y^{+20 \delta}$.
	Let $x$ and $x'$ be two points of $\cyl(Y)$. By definition there exist two points of $Y$, $y_1$ and $y_2$ (\resp $y'_1$ and $y'_2$) such that $x$ (\resp $x'$) belongs to the $10 \delta$-neighbourhood of $\geo {y_1}{y_2}$  (\resp $\geo{y'_1}{y'_2}$). We denote $p$ and $p'$ the respective projections of $x$ and $x'$ on $\geo {y_1}{y_2}$ and $\geo{y'_1}{y'_2}$. 
	\begin{itemize}
		\item By construction, the geodesic segments $\geo x{p}$ and $\geo{p'}{x'}$ are contained in $\cyl(Y)$ and shorter than $10 \delta$.
		\item Since the triangles $\left[ y_2 , p, p'\right]$ and $\left[ y_2 , y'_2, p'\right]$ are $4 \delta$-thin, $\geo{p}{p'}$ stays in the $8\delta$-neighbourhood of $\geo{p}{y_2} \cup \geo{y_2}{y'_2} \cup \geo{y'_2}{p'}$. 
		However these segments are parts of geodesics between two points of $Y$. 
		Thus $\geo{p}{p'} \subset \cyl(Y)$.
	\end{itemize}
	Hence $\geo x p \cup \geo p {p'} \cup \geo {p'} {x'}$ lies in $\cyl(Y)$.
	
\end{proof}

\subsection{Asphericity}
\nota If $X$  is a simplicial complex, we denote by $X^{(k)}$ its $k$-skeleton.

\paragraph{} In this part we prove a version of the famous Rips' theorem: a hyperbolic simplicial complex which is  locally aspherical is globally aspherical.
Let $X$ be a metric space and $d$ a positive number. The Rips' polyhedron of $X$ denoted by $P_d(X)$ is a simplicial complex defined as follows. The simplices of $P_d(X)$ are the finite subsets of $X$ of diameter less than $d$. It is known (see \cite[Section 2.2]{Gro87}) that  if $X$ is geodesic $\delta$-hyperbolic, then for all $d \geq 4 \delta$ the polyhedron $P_d(X)$ is contractible. More precisely, we have the following proposition.

\begin{prop}[cf. {\cite[Chap. 5 Prop 1.1]{CooDelPap90}}]
\label{contraction rips}
	Let $X$ be a geodesic, $\delta$-hyperbolic space. Let $d \geq 4 \delta$ and $n \in \N$. The polyhedron $\ripsn {n+1} X$ is $n$-connected.
\end{prop}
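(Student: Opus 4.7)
The plan is to prove the stronger statement that the whole Rips polyhedron $\rips{X}$ is contractible; the $n$-connectedness of $\ripsn{n+1}(X)$ then follows by cellular approximation, since the inclusion $\ripsn{n+1}(X) \hookrightarrow \rips{X}$ induces an isomorphism on $\pi_k$ for $k \leq n$ (the relative CW pair has cells only in dimensions $\geq n+2$). So it suffices to exhibit an explicit contraction of $\rips{X}$ onto a point.

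To construct such a contraction, I would fix a basepoint $x_0 \in X$ and, for every $x \in X$, a geodesic $\gamma_x$ from $x_0$ to $x$ parameterised by arc length. For each $r \geq 0$, define a map $\pi_r$ on the vertex set of $\rips{X}$ by $\pi_r(x) = \gamma_x(\min(r, |x_0 - x|))$; that is, slide $x$ along $\gamma_x$ toward $x_0$ until its distance from $x_0$ equals $r$, leaving it fixed if it is already closer. Extend $\pi_r$ affinely on each simplex. For $r$ larger than all vertex distances to $x_0$ arising in any given compact subcomplex, $\pi_r$ restricts to the identity there; for $r = 0$, $\pi_r$ collapses everything to $x_0$.

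The heart of the matter is a \emph{sliding lemma}: whenever $x, y \in X$ satisfy $|x - y| \leq d$, one has $|\pi_r(x) - \pi_r(y)| \leq d$ for every $r \geq 0$. This is where the hypothesis $d \geq 4\delta$ is used. The proof is a case analysis on the position of $r$ relative to $|x_0 - x|$ and $|x_0 - y|$; in each case the relevant sliding point is projected across the $4\delta$-thin geodesic triangle $[x_0, x, y]$ (whose thinness follows from $\delta$-hyperbolicity via the companion proposition), and the resulting deviation is controlled by combining thin-triangle estimates with the fact that $\gamma_x$ is arc-length parameterised. The lemma guarantees that $\pi_r$ sends simplices of $\rips{X}$ to simplices, hence is a genuine simplicial self-map for every $r$.

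Finally, I would glue the family $\{\pi_r\}$ into a continuous contraction $H\colon \rips{X} \times [0, \infty) \to \rips{X}$. The key continuity observation is that $\pi_r(x)$ and $\pi_{r'}(x)$ both lie on the geodesic $\gamma_x$ at distance $|r - r'|$ apart, so for $|r - r'| \leq d$ they span a $1$-simplex of $\rips{X}$ and linear interpolation along that edge supplies a continuous bridge. Subdividing the parameter interval into steps of length at most $d$ and interpolating on each step produces a global deformation of $\rips{X}$ onto $\{x_0\}$, proving contractibility. The only real obstacle is the sliding lemma; the rest is bookkeeping and the standard cellular-approximation reduction to the skeleton.
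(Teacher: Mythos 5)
The paper gives no proof of this proposition --- it is quoted from Coornaert--Delzant--Papadopoulos --- so I am comparing your argument with the standard one you are implicitly replacing. Your reduction of $n$-connectedness of $\ripsn{n+1}X$ to contractibility of $\rips X$ via cellular approximation is correct, and contracting toward a basepoint is the right idea. The gap is that your entire proof rests on the ``sliding lemma'', which you assert rather than prove, and which is exactly the point where the naive simultaneous radial flow breaks down. Write $a=\distX{x_0}{x}$, $b=\distX{x_0}{y}$, $c=\distX{x}{y}\leq d$. For $r\leq\min(a,b)$ the four-point inequality gives
\begin{displaymath}
\distX{\gamma_x(r)}{\gamma_y(r)} \;\leq\; 2\bigl(r-\gro{x}{y}{x_0}\bigr)_+ + 4\delta \;\leq\; c - |a-b| + 4\delta,
\end{displaymath}
and for $a\leq r<b$ one similarly gets $\distX{x}{\gamma_y(r)}\leq c-(b-r)+2\delta$. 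When $c$ is close to $d$ and $|a-b|$ (respectively $b-r$) is smaller than the hyperbolicity error, these bounds are $d+O(\delta)$, not $d$; the estimate does not close up, and nothing in your sketch explains how to absorb the error. The classical proof avoids this precisely by moving \emph{only the vertex $y$ of a finite subcomplex that is farthest from $x_0$}, and by a definite step $d/2\geq 2\delta$: the one-sided inequality $\distX{x_0}{v}\leq\distX{x_0}{y}$ forces $\gro{x_0}{v}{y}\geq\frac12\distX{y}{v}$, whence $\distX{y'}{v}\leq\max\bigl(\distX{y}{v}-\tfrac d2+2\delta,\;\tfrac d2+2\delta\bigr)\leq d$. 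Your flow slides pairs of vertices at comparable but unequal distances from $x_0$, for which no such one-sided control is available.

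There is a second, related gap in the gluing step. To interpolate between $\pi_{r'}$ and $\pi_r$ on a simplex $\sigma=\{x_1,\dots,x_k\}$ you need the \emph{whole} set $\{\pi_{r}(x_i)\}\cup\{\pi_{r'}(x_j)\}$ to span a simplex of $\rips X$, i.e. the mixed distances $\distX{\pi_r(x_i)}{\pi_{r'}(x_j)}$ must also be $\leq d$; the triangle inequality only gives $d+|r-r'|$, so subdividing into steps of length $d$ does not suffice, and even arbitrarily fine steps only help if the sliding lemma holds with strict inequality. (In the one-vertex-at-a-time argument this issue disappears: $\sigma\cup\{y'\}$ has a single new vertex and one checks the finitely many new distances directly.) I would advise abandoning the global flow and running the standard induction on finite subcomplexes: every compact subset of $\ripsn{n+1}X$ lies in a finite subcomplex, which one contracts into a single simplex by repeatedly replacing the farthest vertex $y$ by the point $y'\in\geo{x_0}{y}$ with $\distX{y}{y'}=d/2$.
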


Before studying the case of an arbitrary simplicial complex, we prove the following proposition.

\begin{prop}
\label{equivalence homotopy rips}
	Let $X$ be a $n$-dimensional simplicial complex. Let $d>1$. 
	Assume that  for all $r \leq 2(n+1)d$ and for all $x \in X$ the closed ball $\bar B(x,r)$ is contractible in $B(x,r+d)$. Then, there exist two maps:  $f : X \rightarrow \ripsn {n+1}{X^{(0)}}$  and  $g : \ripsn {n+1}{X^{(0)}} \rightarrow X$ such that $g \circ f$ is homotopic to $\id_X$.
\end{prop}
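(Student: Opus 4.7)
The plan is to build $g$ by induction on the skeleton of $\ripsn{n+1}{X^{(0)}}$, to define $f$ simplex-by-simplex as a natural extension of the identity on vertices, and to construct the homotopy $g\circ f\simeq \id_X$ via a third skeletal induction on $X$; the hypothesized contractibility of balls drives all three constructions.

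For $g$, I set $g(v)=v$ on $X^{(0)}$ and proceed inductively with the invariant: for every $k$-simplex $\sigma$ of $\ripsn{n+1}{X^{(0)}}$ there exists a vertex $v^*\in\sigma$ such that $g(\sigma)\subset \bar B(v^*,2kd)$. Given a $(k+1)$-simplex $\tau$ with vertex set $\{v_0,\dots,v_{k+1}\}$ of pairwise distance less than $d$, pick any vertex $v^*$ of $\tau$; for each $k$-face $\sigma_i\subset\tau$ the induction yields a vertex $v^*_i\in\sigma_i$ with $g(\sigma_i)\subset \bar B(v^*_i,2kd)$, and since $\distX{v^*}{v^*_i}<d$ the triangle inequality gives $g(\sigma_i)\subset \bar B(v^*,(2k+1)d)$. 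Thus $g(\partial\tau)\subset \bar B(v^*,(2k+1)d)$, and for $k\le n$ the inequality $(2k+1)d\le 2(n+1)d$ holds, so the hypothesis contracts this ball inside $B(v^*,(2k+2)d)$; I use the contraction to extend $g$ across $\tau$, preserving the invariant at $r_{k+1}=2(k+1)d$.

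The map $f\colon X\to \ripsn{n+1}{X^{(0)}}$ is defined as the linear extension of the identity on vertices across each simplex of $X$: because $X$ is $n$-dimensional and $d>1$, each simplex of $X$ has at most $n+1$ vertices of pairwise distance less than $d$ and therefore corresponds to a simplex of $\ripsn{n+1}{X^{(0)}}$. For the homotopy $H\colon X\times[0,1]\to X$ between $g\circ f$ and $\id_X$, I induct on the skeleton of $X$: the two maps coincide on $X^{(0)}$; on a $k$-simplex $\tau$ of $X$ with a distinguished vertex $w_0$, the already built homotopy on $\partial\tau$ together with $\id_\tau$ and $g\circ f|_\tau$ assemble into a map from $\partial(\tau\times[0,1])$ into a ball $\bar B(w_0,r)$ with $r\le 2(n+1)d$, the bound coming from the invariant built into $g$. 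Contractibility of $\bar B(w_0,r)$ in $B(w_0,r+d)$ then extends the homotopy across $\tau\times[0,1]$.

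The main obstacle is the simultaneous bookkeeping of radii across all three inductions: each contraction step enlarges the ambient ball by an increment of order $d$, and one must verify that after ascending through $n+1$ skeletal levels the radius still stays within $2(n+1)d$, the threshold beyond which the contractibility hypothesis is no longer available. This is exactly the numerology that forces the bound $2(n+1)d$ in the statement and justifies truncating $\rips{X^{(0)}}$ at the $(n+1)$-skeleton.
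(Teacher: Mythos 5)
Your proposal is correct and follows essentially the same route as the paper: $f$ is the simplicial map induced by the identity on vertices, $g$ is built by skeletal induction on the Rips complex using the contractibility of balls with the invariant $g(\sigma)\subset B(v,2kd)$, and the homotopy $g\circ f\simeq \id_X$ is built by a second skeletal induction on $X$ with the same kind of radius control; your "there exists a vertex" form of the invariant is interchangeable with the paper's "for every vertex" form via the triangle inequality, since vertices of a Rips simplex are within distance $d$ of each other. The radius bookkeeping you flag does close up exactly as you claim, staying below the $2(n+1)d$ threshold at every stage.
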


\begin{proof}
	In this proof we denote by $P$ the $(n+1)$-skeleton of the Rips' polyhedron $\rips {X^{(0)}}$.
	We define $f : X^{(0)} \rightarrow P$ by $f(x)=\{x\}$. Let $k\leq n$. If $\sigma$ is a $k$-simplex of $X$, its diameter is less than $d$. 
	Thus the set of its vertices defines a $k$-simplex of $P$. 
	Hence $f$ induces a simplicial map from $X$ to $P$. 
	We now define by induction a map $g : P \rightarrow X$.

		\paragraph{} First we define a map $g^{(0)} : P^{(0)} \rightarrow X$ by $g^{(0)} \left(\{x\}\right) = x$.
		\paragraph{} Assume now  that we have already defined a continuous map $g^{(k)} : P^{(k)} \rightarrow X$ with the following property:  for each $l$-simplex $\sigma$ of $P^{(k)}$, for every vertex $x$ of $\sigma$, $g^{(k)}(\sigma)$ is contained in $B(g^{(k)}(x), 2ld)$. 
		Let $\sigma$ be a $(k+1)$-simplex of $P^{(k+1)}$ whose faces are $\sigma_0, \dots , \sigma_{k+1}$. 
		Choose a vertex $x$ of $\sigma$. 
		The function $g^{(k)}$ maps the border $\partial \sigma = \bigcup_{i=0}^{k+1} \sigma_i$ onto a $k$-sphere of $X$ contained in $B(g^{(k)}(x), (2k+1)d)$. 
		However this sphere is contractible in $B(g^{(k)}(x), 2(k+1)d)$. 
		We  define $g^{(k+1)}(\sigma)$ by choosing a homotopy which contracts $g^{(k)}(\partial \sigma)$ to a point. 
		This defines a continuous map $g^{(k+1)} : P^{(k+1)} \rightarrow X$ which coincides with $g^{(k)}$ on $P^{(k)}$ and satisfies the following property: for all $l$-simplex $\sigma$ of $P^{(k+1)}$, for every vertex $x$ of $\sigma$, $g^{(k)}(\sigma)$ is contained in $B(g^{(k)}(x), 2ld)$. 
		We choose for $g$ the map $g^{(n+1)}$.
		
	\begin{lemm}
		For all $k \leq n+1$ there is a continuous map $H^{(k)} : X^{(k)} \times [0,1] \rightarrow X$ satisfying the following properties: 
		\begin{enumerate}
			\item $\restriction {H^{(k)}}{X^{(k)} \times \{0\}} = \id_{X^{(k)}}$ and   $\restriction {H^{(k)}}{X^{(k)}\times \{1\}} =\restriction {g \circ f}{X^{(k)}}$,
			\item for each $l$-simplex $\sigma$ of $X^{(k)}$, for every vertex $x$ of $\sigma$, $H^{(k)}(\sigma \times [0,1])$ is contained in $B(x,(2l+1)d)$.
		\end{enumerate}
	\end{lemm}
	
	\begin{proof}
		We prove this result by induction on $k$.
		The restriction of $g \circ f$ to $X^{(0)}$ is the identity, thus the proposition is obvious for the 0-skeleton.
		\paragraph{} Assume now that the lemma is true for $k \leq n$. 
		Consider a $(k+1)$-simplex $\sigma$ of $X^{(k+1)}$. 
		We chose a vertex $x$ of $\sigma$. 
		By definition of $g$ the set  $g \circ f (\sigma)$ is contained in $B(x,(2k+2)d)$. 
		Moreover, the induction assumption gives that $H^{(k)}(\partial \sigma \times [0,1]) \subset B(x,(2k+2)d)$. 
		Thus the subset $\sigma \cup H^{(k)}(\partial \sigma \times [0,1]) \cup g \circ f (\sigma)$ is a $(k+1)$-sphere of $X$ contained in $B(x,(2k+2)d)$. 
		This sphere is therefore contractible in $B(x,(2k+3)d)$. 
		By choosing a homotopy which contracts it to a point, we define a map $H^{(k+1)} : \sigma \times [0,1] \rightarrow X$ such that 
			\begin{enumerate}
				\item $\restriction {H^{(k+1)}}{\sigma\times \{0\}} = \id_\sigma$ and  $\restriction {H^{(k+1)}}{\sigma\times \{1\}} = \restriction {g \circ f}{\sigma}$,
				\item $\restriction {H^{(k+1)}}{ \partial \sigma\times [0,1]} = \restriction {H^{(k)}}{ \partial \sigma\times [0,1]}$,
				\item $H^{(k+1)} (\sigma \times [0,1]) \subset B(x, (2k+3)d)$.
			\end{enumerate}
			This defines a map $H^{(k+1)} : X^{(k+1)} \times [0,1] \rightarrow X$ which satisfies the properties of the lemma.
	\end{proof}
	
\paragraph{End of the proof of Proposition \ref{equivalence homotopy rips}}
The map $H^{(n+1)} : X\times [0,1] \rightarrow X$ is a homotopy between $g\circ f $ and $id_X$.
\end{proof}

\begin{theo}
\label{aspherical hyperbolic complex}
	Let $X$ be a $\delta$-hyperbolic, $n$-dimensional, simplicial complex. 
	Assume that for all $r \leq 8(n+1)\delta$ and for all $x \in X$, the ball $B(x,r)$ is homotopic to zero in $B(x,r+4 \delta)$. 
	Then, all homotopy groups of $X$ are trivial. Hence $X$ is contractible.
\end{theo}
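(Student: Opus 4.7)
The plan is to compare $X$ with the $(n+1)$-skeleton of a suitable Rips polyhedron via the two preceding propositions, and then to upgrade the resulting high connectedness to contractibility through Hurewicz and Whitehead.

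First I would apply Proposition~\ref{equivalence homotopy rips} with $d = 4\delta$. With this choice the range $r \leq 2(n+1)d$ becomes $r \leq 8(n+1)\delta$, and the enlarged contraction ball $B(x, r+d)$ becomes $B(x, r+4\delta)$, matching our standing hypothesis exactly. The proposition then yields continuous maps
\begin{displaymath}
f : X \longrightarrow P, \qquad g : P \longrightarrow X, \qquad g \circ f \simeq \id_X,
\end{displaymath}
where $P = \ripsn{n+1}{X^{(0)}}$.

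In parallel, Proposition~\ref{contraction rips} applied with the same $d = 4\delta$ tells us that $P$ is $n$-connected. Combining these two ingredients: for any $k \leq n$ and any continuous map $\alpha : S^k \to X$, the composition $f \circ \alpha$ is null-homotopic in $P$; transporting that null-homotopy by $g$ and using $g \circ f \simeq \id_X$ shows $\alpha$ itself is null-homotopic in $X$. Hence $\pi_k(X) = 0$ for every $k \leq n$, i.e.\ $X$ is $n$-connected.

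To conclude that \emph{every} homotopy group of $X$ vanishes, I would invoke Hurewicz and Whitehead: since $X$ is $n$-connected, Hurewicz gives $H_k(X) = 0$ for $0 < k \leq n$, while $X$ being an $n$-dimensional CW complex forces $H_k(X) = 0$ for $k > n$. Thus $X$ is a simply connected CW complex with trivial reduced homology, and the homological form of Whitehead's theorem yields that $X$ is contractible. The only subtle point I anticipate is that Proposition~\ref{contraction rips} is stated for geodesic spaces whereas $X^{(0)}$ is discrete; this is harmless, because the proof of contractibility of the Rips complex relies only on the four-point Gromov condition, which is inherited by any metric subspace of a $\delta$-hyperbolic space.
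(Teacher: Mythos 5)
Your proposal is correct and follows essentially the same route as the paper's own proof: fix $d = 4\delta$, combine Proposition~\ref{contraction rips} ($n$-connectedness of $\ripsn{n+1}{X^{(0)}}$) with the maps $f$, $g$ and the homotopy $g \circ f \simeq \id_X$ from Proposition~\ref{equivalence homotopy rips} to get $n$-connectedness of $X$, then use $\dim X = n$ together with Hurewicz--Whitehead to conclude. The extra care you take at the end (spelling out the homological Whitehead argument) and your remark about applying the Rips construction to the discrete vertex set are sound refinements of the same argument, not a different approach.
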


\begin{proof}
 We fix $d = 4 \delta$. Using Proposition \ref{contraction rips}, the Rips' polyhedron $P= \ripsn {n+1}{X^{(0)}}$ is $n$-connected. Moreover, the fact that the small balls are aspherical gives two maps $f:X\rightarrow P$ and $g: P \rightarrow X$ such that $g \circ f$ is homotopic to $\id_X$. It follows that $X$ is also $n$-connected. Since $X$ is $n$-dimensional, all the higher homotopy groups of $X$ are trivial by Hurewicz Theorem.
\end{proof}
\section{Cone over a metric space}
\label{part cone}

In this section we prove an asymptotic version of the Berestovskii's theorem concerning the hyperbolicity of a cone with a locally hyperbolic base.
From now on, we fix a positive number $r_0$ whose value will be made precise in Section \ref{part small cancellation}.

\subsection{Definition}

\begin{defi}
	Let $Y$ be a metric space. The cone over $Y$, denoted by $C(Y)$ is the quotient of  $Y\times \left[0, r_0 \right]$ by the equivalence relation defined as follows. Two points $(y_1,r_1)$ and $(y_2,r_2)$ are equivalent if $r_1=r_2=0$ or  $(y_1,r_1)=(y_2,r_2)$.
\end{defi}

\nota The equivalence class of $(y,0)$, called the vertex of the cone, is denoted $v_Y$ (or simply $v$). The equivalence class of any other point $(y,r)$ is still denoted by $(y,r)$.

\subsubsection*{Hyperbolic metric on a cone}

We define a metric on $C(Y)$ as M. Bridson and A. Haefliger do in \cite[Chap I.5]{BriHae99}. If $y$ and $y'$ are two points of $Y$, we consider the angle $\angle {y}{y'}$ defined by 
\begin{math}
	\angle{y}{y'} = \min\left\{ \pi, \frac{\distX{y}{y'}}{\sinh r_0}\right\}
\end{math}.

\begin{prop}[{\cite[Chap. I.5 Prop. 5.9]{BriHae99}}]
	The following formula defines a distance on the cone $C(Y,r_0)$.
	\begin{equation}
		\label{distance cone}
		\distX{(y,r)}{(y',r')} = \arccosh \Big( \cosh r \cosh r' - \sinh r \sinh r' \cos \angle {y}{y'}\Big)
	\end{equation}
\end{prop}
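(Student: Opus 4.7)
The plan is to interpret the formula as the hyperbolic law of cosines in the real hyperbolic plane $\mathbb{H}^2$: given $(y,r), (y',r') \in C(Y)$, pick a basepoint $o\in \mathbb{H}^2$ and two points $p, p'$ at hyperbolic distances $r, r'$ from $o$ with angle $\angle{y}{y'}$ between the rays $op$ and $op'$; the right-hand side of the displayed formula then equals the hyperbolic distance $\distX{p}{p'}$. Symmetry is immediate, and since $\cos\angle{y}{y'}\leq 1$,
\[
\cosh r\cosh r'-\sinh r\sinh r'\cos\angle{y}{y'} \geq \cosh(r-r') \geq 1,
\]
so $\arccosh$ is well defined and the expression is non-negative. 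Vanishing forces $r=r'$ and $\cos\angle{y}{y'}=1$: either $r=r'=0$ and both points equal the vertex $v$, or $\angle{y}{y'}=0$ gives $\distX{y}{y'}=0$, so the two representatives coincide in $C(Y)$.

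The core step is the triangle inequality, for which I would record two preliminary facts. First, the angle function is subadditive,
\[
\angle{y_1}{y_3} \leq \angle{y_1}{y_2}+\angle{y_2}{y_3},
\]
since $s\mapsto \min\{\pi, s/\sinh r_0\}$ is non-decreasing and subadditive on $[0,+\infty)$ and $\distX{\cdot}{\cdot}$ is subadditive on $Y$. Second, for fixed $r,r'\geq 0$ the function $\theta\mapsto \arccosh(\cosh r\cosh r'-\sinh r\sinh r'\cos\theta)$ is non-decreasing on $[0,\pi]$.

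Given three points $(y_i,r_i)$ with $\theta_{ij}=\angle{y_i}{y_j}$, choose $\theta_{12}^\ast\leq\theta_{12}$ and $\theta_{23}^\ast\leq\theta_{23}$ with $\theta_{12}^\ast+\theta_{23}^\ast=\min(\theta_{12}+\theta_{23},\pi)$: take $\theta_{ij}^\ast=\theta_{ij}$ when $\theta_{12}+\theta_{23}\leq\pi$, otherwise cap so the sum equals $\pi$. Place $p_1,p_2,p_3\in\mathbb{H}^2$ at hyperbolic distances $r_i$ from a common origin $o$ along rays making angles $0$, $\theta_{12}^\ast$ and $\theta_{12}^\ast+\theta_{23}^\ast$ at $o$. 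By the law of cosines and monotonicity, $\distX{p_1}{p_2}\leq \distX{(y_1,r_1)}{(y_2,r_2)}$ and $\distX{p_2}{p_3}\leq \distX{(y_2,r_2)}{(y_3,r_3)}$, while subadditivity of $\angle{\cdot}{\cdot}$ combined with monotonicity gives $\distX{p_1}{p_3}\geq \distX{(y_1,r_1)}{(y_3,r_3)}$. The triangle inequality in $\mathbb{H}^2$ then closes the loop.

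The only real subtlety is the case $\theta_{12}+\theta_{23}>\pi$, where the angles must be capped and one must keep careful track of the direction of each monotonicity inequality; beyond this bookkeeping, no further geometric input is needed.
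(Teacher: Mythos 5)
Your argument is correct, and it is essentially the proof the paper is implicitly relying on: the statement is quoted from Bridson--Haefliger (Prop.\ I.5.9), whose proof is exactly this comparison-in-$\mathbb{H}^2$ argument via the hyperbolic law of cosines, subadditivity of the angle $\theta(y,y')=\min\{\pi,\distX{y}{y'}/\sinh r_0\}$, monotonicity in the angle, and the capping at $\pi$ when the two angles sum to more than $\pi$. No gaps.
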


\rems 
\begin{itemize}
	\item The distance on the cone has the following interpretation. Given two points $(y,r)$ and $(y',r')$ of $C(Y)$,  the distance between them is the distance between two points of the hyperbolic disc respectively distant from the centre of $r$ and $r'$, such that the central angle between them is $\angle{y}{y'}$.
	\item It is important to notice that $\distX{(y,r)}{(y',r')} $ is a \textit{continuous} function of $y$, $y'$, $r$ and $r'$.
	\item The cone $C(Y)$ is the ball of centre $v$ and of radius $r_0$ of the space $C_{-1}\left( \frac Y{\sinh r_0}\right)$ defined in \cite[Chap. I.5]{BriHae99}.
\end{itemize}

\begin{prop}[{\cite[Chap. I.5 Prop. 5.10]{BriHae99}}]
\label{geodesic cone BH}
	Let $(y,r)$ and $(y',r')$ be two points of $C(Y,r_0)$.
	\begin{enumerate}
		\item If $r, r' >0$ and $\angle{y}{y'} < \pi$, then there is a bijection between the set of geodesics joining $y$ and $y'$ in $Y$ and the set of geodesics joining $(y,r)$ and $(y',r')$ in $C(Y)$.
		\item In all other cases, there is a unique geodesic joining $(y,r)$ and $(y',r')$.
	\end{enumerate}
\end{prop}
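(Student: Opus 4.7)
The plan is to reduce everything to comparison with the hyperbolic plane $\mathbb{H}_2$. For any pair of points $p=(y,r)$ and $p'=(y',r')$ in $C(Y)$, choose $\tilde p, \tilde p'\in \mathbb{H}_2$ at distances $r, r'$ from a basepoint $\tilde v$ with central angle $\angle{y}{y'}$. The distance formula (\ref{distance cone}) is exactly the hyperbolic cosine law, so $\distX{\tilde p}{\tilde p'}=\distX{p}{p'}$, and the $\mathbb{H}_2$-geodesic from $\tilde p$ to $\tilde p'$ will serve as a blueprint for constructing geodesics in $C(Y)$.

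\textbf{Case (ii):} if $r=0$, the formula with $r=0$ gives $\distX{v}{(z,s)}=s$ for every $(z,s)$; moreover the formula implies $\distX{(z,s)}{(z',s')}\geq |s-s'|$ since $\cos\angle{z}{z'}\leq 1$, so any path from $v$ to $(y',r')$ has length at least the total variation of its radial coordinate, namely $r'$, and the radial path $t\mapsto(y',tr')$ uniquely realises this bound. The case $r'=0$ is symmetric. If $\angle{y}{y'}=\pi$, the formula yields $\distX{p}{p'}=r+r'$, realised by the concatenation $\geo{p}{v}\cup\geo{v}{p'}$; uniqueness follows by showing every geodesic from $p$ to $p'$ must visit $v$, because in the $\mathbb{H}_2$-comparison picture $\tilde p$ and $\tilde p'$ are antipodal across $\tilde v$, forcing any minimising comparison path to pass through $\tilde v$, and the previous subcase then applies to each half.

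\textbf{Case (i):} suppose $r,r'>0$ and $\angle{y}{y'}<\pi$, so $\distX{y}{y'}=\sinh r_0\cdot\angle{y}{y'}$. Given any $Y$-geodesic $c:[0,\distX{y}{y'}]\to Y$ from $y$ to $y'$, let $\tilde\gamma$ be the unique $\mathbb{H}_2$-geodesic from $\tilde p$ to $\tilde p'$, written in polar coordinates $\tilde\gamma(t)=(\theta(t),\rho(t))$ with $\theta$ monotone from $0$ to $\angle{y}{y'}$, and set $\gamma(t)=(c(\sinh r_0\cdot\theta(t)),\rho(t))$. The key observation is that since $c$ is a geodesic, the $Y$-distance between $c(\sinh r_0\theta(s))$ and $c(\sinh r_0\theta(t))$ equals $\sinh r_0\cdot|\theta(t)-\theta(s)|<\pi\sinh r_0$, so their cone-angle is exactly $|\theta(t)-\theta(s)|$, and (\ref{distance cone}) gives $\distX{\gamma(s)}{\gamma(t)}=\distX{\tilde\gamma(s)}{\tilde\gamma(t)}$. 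Hence $\gamma$ is a geodesic of length $\distX{p}{p'}$, and distinct choices of $c$ yield distinct $\gamma$'s, providing the injection.

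The main obstacle is surjectivity. Given a geodesic $\gamma:[0,L]\to C(Y)$ from $p$ to $p'$, the strict inequality $L=\distX{p}{p'}<r+r'$, which holds because $\angle{y}{y'}<\pi$, keeps $\gamma$ away from $v$, so I may write $\gamma(t)=(\eta(t),\rho(t))$ with $\eta$ a continuous path from $y$ to $y'$ in $Y$. Subdividing $[0,L]$ into fine pieces and applying (\ref{distance cone}) on each, I would compare $\gamma$ with a polygonal path $\tilde\gamma_N$ in $\mathbb{H}_2$ whose successive central angles equal $\distX{\eta(t_{i-1})}{\eta(t_i)}/\sinh r_0$: the piece-by-piece equality $\distX{\gamma(t_{i-1})}{\gamma(t_i)}=\distX{\tilde\gamma_N(t_{i-1})}{\tilde\gamma_N(t_i)}$ makes $L$ equal to the $\mathbb{H}_2$-length of $\tilde\gamma_N$, which by the triangle inequality is bounded below by the $\mathbb{H}_2$-distance between its endpoints. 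Passing to the limit and using that this endpoint distance tends to the $\mathbb{H}_2$-distance between $\tilde p$ and a point at central angle equal to the total $Y$-variation of $\eta$ divided by $\sinh r_0$, equality with $L=\distX{p}{p'}$ forces this total variation to be exactly $\distX{y}{y'}$; hence $\eta$ is a $Y$-geodesic, and simultaneously the radial profile of $\gamma$ coincides with $\tilde\gamma$, so $\gamma$ arises from our construction. The delicate point is this limiting angular argument: since $Y$ carries no smooth structure, one must sum cosine-law identities in $\mathbb{H}_2$ carefully enough to track when each inequality becomes sharp, and verify that any strict slack in the $Y$-distances along $\eta$ propagates into strict slack in the cone length.
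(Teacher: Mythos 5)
The paper does not prove this statement; it is quoted directly from Bridson--Haefliger (I.5, Prop.~5.10), so there is no in-text argument to compare yours against. What you have written is essentially a reconstruction of the Bridson--Haefliger proof: the cone distance is by definition the $\mathbb{H}_2$ cosine law, a $Y$-geodesic $c$ spans an isometrically embedded hyperbolic sector whose internal geodesic furnishes the injection $c\mapsto\gamma$, and uniqueness/surjectivity come from developing chains of points of $C(Y)\setminus\{v\}$ into $\mathbb{H}_2$ and exploiting the equality case of the triangle inequality there. The injection step is carried out correctly: since $c$ is a geodesic and $\theta$ is monotone, the cone angle between $c(\sinh r_0\,\theta(s))$ and $c(\sinh r_0\,\theta(t))$ is exactly $|\theta(t)-\theta(s)|$, so the sector over $c$ is isometric to a hyperbolic sector and $\gamma$ is a geodesic.

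The one place where the argument as written has a real hole is the treatment of chains whose total developed angle reaches or exceeds $\pi$, and it occurs twice. First, in the uniqueness claim for $\angle{y}{y'}=\pi$: you assert that any minimising comparison path must pass through $\tilde v$, but a path in $C(Y)\setminus\{v\}$ does not develop into $\mathbb{H}_2$ once its cumulative angle passes $\pi$; one must develop into the universal cover of the punctured hyperbolic disc and prove separately that a path staying at radius $\geq\epsilon>0$ with angular variation $\geq\pi$ has length at least $r+r'+c(\epsilon)$ with $c(\epsilon)>0$ --- that strict excess is what forces every geodesic through $v$. Second, in the surjectivity argument you compare $L$ with the $\mathbb{H}_2$-distance to a point at central angle equal to the total $Y$-variation of $\eta$ divided by $\sinh r_0$, which is meaningless if that quantity exceeds $\pi$; you must first exclude $\Theta_N\geq\pi$ by the same length estimate (it would give $L\geq r+r'$, contradicting $L<r+r'$), and only then use monotonicity of the cosine law in the angle together with $\Theta_N\geq\angle{y}{y'}$ (the truncated angle satisfies the triangle inequality) to squeeze $\Theta_N=\angle{y}{y'}$, conclude that $\eta$ has $Y$-length $\distX y{y'}$, and read off the radial profile from the rigidity of the equality case. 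These points are precisely the content of Lemma I.5.6 of Bridson--Haefliger, on which your proof silently relies; once that lemma is stated and proved, the rest of your argument goes through.
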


\exs 
\begin{enumerate}
	\item If $Y$ is a circle, endowed with its length metric, whose perimeter is $2\pi \sinh r_0$, then the cone $C(Y)$ is the hyperbolic disc of radius $r_0$.
	\item If $Y$ is isometric to a line, then $C(Y)\setminus \{ v \}$ is the universal cover of the punctured hyperbolic disc of radius $r_0$.
\end{enumerate}

\subsubsection*{Relation between the cone and its base}

In order to compare the cone $C(Y)$ and its base $Y$, we consider two maps: 

\begin{displaymath}
	\begin{array} {rccccrccc}
		\iota :	& Y	&\rightarrow	& C(Y)	& \quad	& p :	&	C(Y)\setminus \{v\}	& \rightarrow	&	Y \\
					& y	&\rightarrow	& (y,r_0)	&				&		& (y,r)							&	\rightarrow	& y \\
	\end{array}
\end{displaymath}

\begin{prop}
\label{projection distance cone}
	Let $(y,r)$ and $(y',r')$ be two points of $C(Y)$. Then
		\begin{displaymath}
			2\min\{r,r'\}\frac {\angle{y}{y'}}\pi \leq \distX {(y,r)}{(y',r')} \leq \distX {r}{r'} + \sqrt{\sinh r \sinh r'}\angle{y}{y'}
		\end{displaymath}
		In particular, let $x$ be a point of $C(Y)$ whose distance to $v$ is at least $\frac {r_0}2$.
		Then for every point $x' $ in the ball $B\left(x,\frac {r_0}3 \right)$ we have 
		\begin{math}
			\dist Y {p(x)}{p(x')} \leq \frac {3\pi \sinh r_0}{r_0} \dist {C(Y)}x {x'}
		\end{math}.
\end{prop}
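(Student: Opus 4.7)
My plan is to expand the cone distance formula into a form where the radial and angular contributions separate cleanly, then handle the two bounds and the corollary in sequence.

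First I rewrite formula (\ref{distance cone}) using the identity $\cosh r \cosh r' - \sinh r \sinh r' = \cosh(r-r')$ to obtain
\begin{displaymath}
\cosh\distX{(y,r)}{(y',r')} = \cosh(r-r') + 2\sinh r \sinh r' \sin^2\left(\angle{y}{y'}/2\right).
\end{displaymath}
This form makes visible what the two summands of the upper bound are capturing.

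For the upper bound, by symmetry assume $r \leq r'$ and route through the intermediate point $(y',r)$. The leg $\distX{(y',r)}{(y',r')}$ equals $r'-r$ directly from the cone formula, while with $r = r'$ the identity above collapses to $\sinh(d/2) = \sinh r \sin(\angle y {y'}/2)$. Applying $\operatorname{arcsinh}(u) \leq u$ and $\sin(u) \leq u$ then gives $\distX{(y,r)}{(y',r)} \leq \sinh r \cdot \angle y{y'} \leq \sqrt{\sinh r \sinh r'}\,\angle y{y'}$ (using $r \leq r'$), and the triangle inequality yields the stated upper bound.

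For the lower bound, still with $r \leq r'$, I plan to first prove the monotonicity
\begin{displaymath}
\distX{(y,r)}{(y',r')} \geq \distX{(y,r)}{(y',r)}.
\end{displaymath}
Subtracting the two $\cosh$-formulas gives $\cosh r(\cosh r' - \cosh r) - \sinh r \cos\angle y{y'}(\sinh r' - \sinh r)$; bounding $\cos \angle y{y'} \leq 1$ (legitimate because $\sinh r' - \sinh r \geq 0$) collapses this to $\cosh(r'-r) - 1 \geq 0$. Hence it suffices to prove the lower bound when $r = r'$, i.e.\ to bound $2\operatorname{arcsinh}(\sinh r \sin(\angle y{y'}/2))$ from below by $2r\angle y{y'}/\pi$. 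Two elementary inequalities finish the job: Jordan's inequality $\sin(\angle y{y'}/2) \geq \angle y{y'}/\pi$ on $[0,\pi]$, and the convexity of $t \mapsto \sinh(rt)$ on $[0,1]$ (it vanishes at $0$ and equals $\sinh r$ at $1$, so lies below the chord $t\sinh r$), which is equivalent to $\operatorname{arcsinh}(\sinh r \cdot t) \geq rt$ for $t \in [0,1]$. Apply the second with $t = \sin(\angle y{y'}/2)$, then the first. The monotonicity reduction is the only genuinely non-obvious step; everything else is a direct computation.

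For the ``in particular'' part, since $\distX x v \geq r_0/2$ and $\distX x {x'} < r_0/3$, the reverse triangle inequality gives $\distX{x'}v \geq r_0/6 > 0$, so $p(x')$ is well defined and $\min\{r,r'\} \geq r_0/6$. The lower bound then reads $\distX x{x'} \geq r_0 \angle y{y'}/(3\pi)$. If $\angle y{y'}$ equalled $\pi$ this would force $\distX x{x'} \geq r_0/3$, contradicting $x' \in B(x,r_0/3)$; hence $\angle y{y'} < \pi$ and consequently $\angle y{y'} = \distX y{y'}/\sinh r_0$. Substituting this back gives exactly the claimed Lipschitz estimate $\distX{p(x)}{p(x')} \leq \frac{3\pi \sinh r_0}{r_0}\distX x {x'}$.
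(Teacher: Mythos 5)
Your proof is correct, but it reaches both inequalities by a different route than the paper. The paper works directly with the function $\theta \mapsto \arccosh\bigl(\cosh(r-r') + \sinh r \sinh r'(1-\cos\theta)\bigr)$ and invokes two analytic facts: concavity in $\theta$ (a concave function vanishing-or-positive at $0$ lies above the chord on $[0,\pi]$, and its value at $\pi$ is $r+r' \geq 2\min\{r,r'\}$, giving the lower bound), and the estimate $\arccosh(a+t) \leq \arccosh(a) + \sqrt{2t}$ (giving the upper bound in one line, since $\sqrt{2\sinh r\sinh r'(1-\cos\theta)} = 2\sqrt{\sinh r\sinh r'}\sin(\theta/2) \leq \sqrt{\sinh r\sinh r'}\,\theta$). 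You instead argue geometrically: the upper bound by the triangle inequality through the intermediate point $(y',r)$, and the lower bound by first proving the radial monotonicity $\distX{(y,r)}{(y',r')} \geq \distX{(y,r)}{(y',r)}$ and then handling the equal-radius case with Jordan's inequality and the chord bound $\sinh(rt) \leq t\sinh r$. Both routes are sound; yours is longer but replaces the concavity claim (which the paper states only in the special case $c=1$ and leaves unverified) by fully elementary steps, and the monotonicity lemma you isolate is a reusable fact in its own right. The treatment of the ``in particular'' clause is essentially identical in both proofs, including the observation that $x' \in B(x,r_0/3)$ forces $\angle{y}{y'} < \pi$ so that the angle equals $\distX{y}{y'}/\sinh r_0$.
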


\begin{proof}
	The inequalities follow from the facts that
	\begin{itemize}
		\item the map $t \rightarrow \arccosh\left(1+ a(1-\cos t)\right)$ is concave,
		\item for all $t \geq 0$, $\arccosh(a + t) \leq \arccosh(a) + \sqrt{2t}$.
	\end{itemize} 
	Consider now a point $x=(y,r)$ of  $C(Y)$ whose distance to $v$ is at least $\frac{r_0}2$. If $x'=(y',r')$ belongs to the ball $B\left(x,\frac {r_0}3 \right)$, then $\distX x {x'} < \frac{r_0}3< r < r+r'$. It follows  that $\angle y{y'} < \pi$. 
	Moreover $r' \geq \frac {r_0}6$.
	Using the previous inequality, we obtain
	\begin{math}
		\frac {r_0} {3 \pi \sinh r_0} \distX y {y'} \leq \frac {2\min\{r,r'\}} {\pi \sinh r_0} \distX y {y'} \leq \distX x {x'}
	\end{math}
\end{proof}

\begin{prop}
\label{function mu}
	Let $\mu  : \R^+ \rightarrow \R^+$, be the map defined by $\forall y,y' \in Y$, $\distX{\iota(y)}{\iota(y')} = \mu \left( \distX{y}{y'} \right)$. Then $\mu$ is non decreasing, continuous, concave map satisfying the following properties 
	\begin{enumerate}
		\item $\forall t, t' \geq 0$, $ \mu(t+t') \leq \mu(t) + \mu(t')$ (subadditivity),
		\item $\forall t \geq 0$, $\frac{2 r_0}{\pi \sinh r_0} \min\{\pi \sinh r_0, t \} \leq \mu(t) \leq t$.
	\end{enumerate}
\end{prop}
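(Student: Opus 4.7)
The plan is to extract an explicit formula for $\mu$ from the distance formula (\ref{distance cone}) and then read off each property from that formula, invoking the two analytic facts already stated in the proof of Proposition \ref{projection distance cone}.

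First I would check that $\mu$ is well-defined. The right-hand side of (\ref{distance cone}), applied with $r = r' = r_0$, depends on $y,y'$ only through $\angle{y}{y'}$, which in turn depends only on $\distX{y}{y'}$. This gives the explicit formula
\begin{equation*}
\mu(t) = \arccosh\bigl(\cosh^2 r_0 - \sinh^2 r_0 \cos\theta(t)\bigr), \qquad \theta(t) = \min\bigl\{\pi, t/\sinh r_0\bigr\}.
\end{equation*}
Rewriting $\cosh^2 r_0 - \sinh^2 r_0 \cos\theta = 1 + \sinh^2 r_0(1-\cos\theta)$, and using $\cosh(2r_0)=2\cosh^2 r_0-1$, I see that $\mu$ vanishes at $0$, strictly increases on $[0,\pi\sinh r_0]$ from $0$ to $2r_0$, and is constant equal to $2r_0$ afterwards. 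Continuity and monotonicity are then immediate, as is the fact that $\mu(0)=0$.

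Next I would deduce inequality (ii) directly from Proposition \ref{projection distance cone} applied at $r=r'=r_0$. The lower bound $2\min\{r,r'\}\angle{y}{y'}/\pi$ specialises to $2r_0 \theta(t)/\pi = \frac{2r_0}{\pi\sinh r_0}\min\{\pi\sinh r_0,t\}$, while the upper bound $\distX{r}{r'} + \sqrt{\sinh r\sinh r'}\,\angle{y}{y'}$ specialises to $\sinh r_0 \cdot \theta(t)$, which is at most $t$ for $t\leq\pi\sinh r_0$ and at most $\pi\sinh r_0$; since $2r_0\leq\pi\sinh r_0\leq t$ for $t\geq\pi\sinh r_0$, the bound $\mu(t)\leq t$ holds throughout $\R^+$.

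For concavity I would rely on the first remark of the proof of Proposition \ref{projection distance cone}: the function $\theta \mapsto \arccosh(1 + a(1-\cos\theta))$ is concave on $[0,\pi]$. Since $t \mapsto \theta(t)$ is affine on $[0,\pi\sinh r_0]$, $\mu$ is concave there as a composition, and it is constant (hence concave) on $[\pi\sinh r_0,+\infty)$. The only delicate point is to check that concavity survives the gluing at $t_0=\pi\sinh r_0$: this follows because $\frac{d}{d\theta}[1+\sinh^2 r_0(1-\cos\theta)] = \sinh^2 r_0 \sin\theta$ vanishes at $\theta=\pi$, so the left derivative of $\mu$ at $t_0$ equals $0$, matching the right derivative and ensuring that the (non-increasing) derivative remains non-increasing across $t_0$. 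Finally, subadditivity is a standard consequence of concavity combined with $\mu(0)=0$: writing $t = \frac{t}{t+t'}(t+t') + \frac{t'}{t+t'}\cdot 0$ and symmetrically for $t'$, concavity gives $\mu(t)\geq\frac{t}{t+t'}\mu(t+t')$ and $\mu(t')\geq\frac{t'}{t+t'}\mu(t+t')$, whose sum yields $\mu(t)+\mu(t')\geq\mu(t+t')$.

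The only step that is not purely mechanical is the concavity at the gluing point; this is the expected obstacle, but it is resolved as soon as one observes the vanishing of $\sin\theta$ at $\theta=\pi$.
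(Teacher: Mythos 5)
Your proof is correct and follows the same route as the paper, which simply writes down the explicit formula $\mu(t) = \arccosh\bigl(\cosh^2 r_0 - \sinh^2 r_0 \cos(\min\{\pi, t/\sinh r_0\})\bigr)$ and asserts that all the properties follow from its concavity (illustrated by a figure). You supply the details the paper omits — notably the concavity check at the gluing point $t_0=\pi\sinh r_0$ and the derivation of item (ii), which you obtain from Proposition \ref{projection distance cone} rather than from concavity together with $\mu'(0)=1$ and the chord to $(\pi\sinh r_0, 2r_0)$; both are valid.
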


\begin{proof}
	By construction, the map $\mu$ is defined by 
	\begin{displaymath}
		\mu(t) = \arccosh \left( \cosh^2 r_0 - \sinh^2 r_0 \cos \left( \min \left\{ \pi, \frac t {\sinh r_0}\right\}\right)\right)
	\end{displaymath}
	The properties of $\mu$ follow from its concavity (cf Fig. \ref{graphe-mu}).
	
	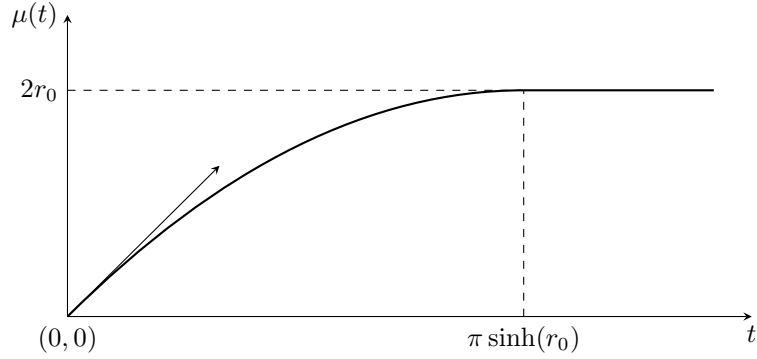
\begin{figure}[h]
	\centering
	\begin{tikzpicture}
		\draw [>=stealth, ->] (0,0) -- (9,0);
		\draw (9,0) node [below] {$t$};
		\draw [>=stealth, ->] (0,0) -- (0,4);
		\draw (0,4) node [left] {$\mu(t)$};
		\draw (0,0) node [left, below] {$(0,0)$};
		\draw [dashed] (6,0) -- (6,3);
		\draw (6,0) node[below] {$\pi \sinh(r_0)$};
		\draw [dashed] (0,3) -- (6,3);
		\draw (0,3) node [left] {$2 r_0$};
		\draw [thick] (6,3) -- (8.5,3);
		\draw [thick] (0,0) .. controls +(2,2) and +(-2,0) .. (6,3);
		\draw [thin,>=stealth, ->] (0,0) -- (2,2);
	\end{tikzpicture}
	\caption{Graph of $\mu$.}
	\label{graphe-mu}
	\end{figure}
	
\end{proof}

\subsection{Cone and hyperbolicity}

\begin{lemm}
\label{inversion cone ultra limit}
	Let $\omega$ be a non-principal ultra-filter.  Let  $\left( Y_n, y^0_n \right)$ be a sequence of pointed metric spaces. 
	We assume that the sequence $\left( \diam( Y_n)\right)$ is bounded.
	The spaces $C\left( \limo \left(Y_n, y^0_n \right)\right)$ and  $\limo \left( C(Y_n), \iota_n(y^0_n) \right)$ are isometric.
\end{lemm}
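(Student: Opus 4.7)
\emph{Plan.} The strategy is to construct an explicit map $\Phi : \limo C(Y_n) \to C(\limo Y_n)$ by taking the $\omega$-limit of the two cone coordinates separately, and to check it is an isometry using the explicit distance formula (\ref{distance cone}) together with the continuity of $\arccosh$, $\cosh$, $\sinh$, $\cos$ and $t \mapsto \min\{\pi, t/\sinh r_0\}$. The hypothesis that $(\diam Y_n)$ is bounded enters only to guarantee that every sequence $(y_n)$ with $y_n \in Y_n$ automatically has an $\omega$-limit in $\limo Y_n$, regardless of how its distance to $y_n^0$ behaves.

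Given a sequence $(x_n) \in \Pi_\omega C(Y_n)$, I first choose representatives $x_n = (y_n, r_n)$ with $y_n \in Y_n$ and $r_n \in [0, r_0]$, using the convention $y_n := y_n^0$ whenever $x_n$ coincides with the apex $v_{Y_n}$. The sequence $(r_n)$ is \oeb\ because it lies in $[0, r_0]$, and $(\distX{y_n^0}{y_n})$ is \oeb\ because $(\diam Y_n)$ is bounded, so $r := \limo r_n$ and $y := \limo y_n$ are well defined, and I set $\Phi(\limo x_n) := (y, r)$. The key computation is that for any two sequences $(x_n) = (y_n, r_n)$ and $(x'_n) = (y'_n, r'_n)$ in $\Pi_\omega C(Y_n)$, formula (\ref{distance cone}) together with the continuity statement above gives
\[
\limo \distX{x_n}{x'_n} \;=\; \arccosh\bigl(\cosh r \cosh r' - \sinh r \sinh r' \cos \angle{y}{y'}\bigr),
\]
whose right-hand side equals $\distX{\Phi(\limo x_n)}{\Phi(\limo x'_n)}$ by (\ref{distance cone}) applied now in $C(\limo Y_n)$. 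Specializing this identity to two sequences representing the same point of $\limo C(Y_n)$ yields well-definedness and injectivity of $\Phi$; keeping the identity as such gives that $\Phi$ is isometric.

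For surjectivity, given $(y, r) \in C(\limo Y_n)$, I pick any representative sequence $(y_n)$ with $y_n \in Y_n$ and $y = \limo y_n$, and set $x_n := (y_n, r) \in C(Y_n)$. The triangle inequality in the cone gives $\distX{x_n}{\iota_n(y_n^0)} \leq 2 r_0$, so $(x_n) \in \Pi_\omega C(Y_n)$, and by construction $\Phi(\limo x_n) = (y, r)$.

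The only point where care is needed is the ambiguity of the angular coordinate at the apex: if $x_n = v_{Y_n}$ then the value $y_n$ is not determined by $x_n$. This is harmless because the very same distance identity shows that when $\limo r_n = 0$ the image $\Phi(\limo x_n)$ is the apex $v_{\limo Y_n}$ irrespective of the chosen $y$; hence setting $y_n := y_n^0$ at the apex points does not affect $\Phi$. Modulo this small subtlety, the entire argument is just the commutation of $\omega$-limits with the continuous formula (\ref{distance cone}).
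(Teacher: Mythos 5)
Your proof is correct and follows essentially the same route as the paper's: the paper defines the inverse map $f : C(\limo Y_n) \to \limo C(Y_n)$, $f(\limo y_n, r) = \limo (y_n, r)$, checks that it preserves distances by the same continuity of formula (\ref{distance cone}), and proves surjectivity by exactly your extraction of $r = \limo r_n$ and $y = \limo y_n$ from the boundedness hypotheses. Your extra care with the apex ambiguity and with well-definedness is a welcome refinement but does not change the argument.
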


\begin{proof}
	Denote by $Y$ the limit space $\limo \left(Y_n, y^0_n \right)$. 
	We define  a map $f : C(Y) \rightarrow \limo C(Y_n)$ by  $f(\limo y_n, r) = \limo (y_n,r)$. Since Formula (\ref{distance cone}), giving the distance in a cone, is continuous, the map $f$ preserves the distances.  Consider now a point $x= \limo (y_n, r_n)$ of $\limo C(Y_n)$. By assumption, the sequences $(r_n)$  and $\left(\distX{y^0_n}{y_n}\right)$ are bounded.  Thus we may consider the real number $r= \limo r_n$ and the point $y =\limo y_n$ of $Y$.
	Furthermore, 
	\begin{displaymath}
		\distX x {f(y,r)} = \limo \distX {(y_n,r_n)}{(y_n,r)} = \limo \distX {r_n}r = 0
	\end{displaymath} 
	It follows that $f(y,r)=x$. 
	Hence $f$ is onto.
\end{proof}

\begin{lemm}
\label{cone over R tree}
	Let $Y$ be a metric space. If every ball of radius $\pi \sinh r_0$ of $Y$ is an $\R$-tree, then the cone $C(Y)$ is CAT(-1). In particular this cone is $\ln 3$-hyperbolic.
\end{lemm}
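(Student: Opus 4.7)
My strategy is to reduce to Berestovskii's theorem (cf.\ \cite[Chap.~II.3]{BriHae99}): the full hyperbolic cone $C_{-1}(Z)$ over any metric space $Z$ is \cat\ if and only if $Z$ is CAT(1). The third remark after Proposition \ref{geodesic cone BH} identifies our $C(Y)$ with the closed ball $\bar B(v,r_0)$ inside $C_{-1}(Y/\sinh r_0)$, and closed balls in a \cat\ space are convex, hence themselves \cat; so it is enough to prove that $Z:=Y/\sinh r_0$ is CAT(1). The $\ln 3$-hyperbolicity will then follow from the standard comparison of a \cat\ space with the hyperbolic plane (cf.\ \cite[Chap.~2]{GhyHar90}).

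To check the CAT(1) condition on $Z$, the only nontrivial requirement is the comparison inequality on geodesic triangles of perimeter strictly less than $2\pi$. Let $T=(z_1,z_2,z_3)$ be such a triangle and write $\ell_{ij}=d_Z(z_i,z_j)$. If some $\ell_{ij}\geq\pi$, the remaining two sides would sum to less than $\pi$, contradicting the triangle inequality $\ell_{ij}\leq\ell_{ik}+\ell_{kj}$. Hence every side of $T$ has length strictly less than $\pi$, and in particular $z_1,z_3\in B(z_2,\pi)$. By hypothesis, after the rescaling by $1/\sinh r_0$, this ball is an $\R$-tree; so inside it the three vertices form a geodesic tripod whose three branches have lengths equal to the $\ell_{ij}<\pi$, and are therefore also geodesic segments of $Z$.

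A geodesic triangle that is a tripod trivially satisfies the CAT($\kappa$) inequality for every $\kappa\in\R$, since each pair of points on the tripod realises a distance at most that of the corresponding pair on any comparison triangle in the model space. Hence $T$ satisfies the CAT(1) inequality, which completes the verification. The only delicate step in the argument is the passage from the local $\R$-tree hypothesis to the global CAT(1) property of $Z$, and this is handled by the observation above relating the perimeter bound to the individual side lengths; everything else reduces to direct citations from \cite{BriHae99}.
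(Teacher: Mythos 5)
Your proof is correct and follows essentially the same route as the paper: both reduce to showing that $Y/\sinh r_0$ is CAT(1) by observing that a geodesic triangle of perimeter less than $2\pi\sinh r_0$ lies in a ball of radius $\pi\sinh r_0$, hence in an $\R$-tree, and is therefore a tripod, after which Berestovskii's theorem yields that the cone is CAT(-1). The only point to make fully explicit is that the half-perimeter bound places every point of each \emph{side} of the triangle (not just the vertices $z_1,z_3$) inside $B(z_2,\pi)$, so that the chosen geodesic sides must coincide with the unique arcs of the tree; your explicit treatment of $C(Y)$ as a convex ball in the full cone $C_{-1}(Y/\sinh r_0)$ is a welcome precision that the paper leaves to a preceding remark.
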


\begin{proof}
	Let $T$ be a geodesic triangle  of $Y$ whose perimeter is less than $2\pi \sinh r_0$.
	It is contained in a ball of radius $\pi \sinh r_0$. 
	It follows that $T$ is 0-thin. 
	Consequently the rescaled space $\frac Y {\sinh r_0}$ is CAT(1) (cf \cite{BriHae99}). 
	Using a Berestovskii's theorem (cf. \cite[Chap. II.3 Th. 3.14]{BriHae99}), the cone $C(Y)$ is CAT(-1). 
	In particular it is $\ln 3$-hyperbolic.
\end{proof}

\begin{prop}
\label{cone over hyperbolic space}
	Let $\epsilon >0$. 
	There exists $\delta >0$ satisfying the following property. Let $X$  be a geodesic space such that each ball of radius $\pi \sinh r_0$ of $X$ is $\delta$-hyperbolic. 
	Let $Y$ be a $10 \delta$-quasi-convex subset of $X$. The cone $C(Y)$ is $(\ln3 + \epsilon)$-hyperbolic.
\end{prop}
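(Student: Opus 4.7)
The plan is to argue by contradiction using ultra-limits, thereby reducing the statement to Lemma \ref{cone over R tree}. Suppose the conclusion fails for some $\epsilon > 0$. Then one can find a sequence $\delta_n \to 0^+$, geodesic spaces $X_n$ in which every ball of radius $\pi \sinh r_0$ is $\delta_n$-hyperbolic, and $10 \delta_n$-quasi-convex subsets $Y_n \subset X_n$, such that for every $n$ some 4-tuple of points of $C(Y_n)$ witnesses that $C(Y_n)$ is not $(\ln 3 + \epsilon)$-hyperbolic. I would choose a base point $y^0_n \in Y_n$ among the projections of these witnesses. Since $C(Y_n)$ has diameter at most $2 r_0$ and only the four witness points actually control the failure, I may intersect $Y_n$ with a large fixed ball around $y^0_n$ without affecting the relevant geometry, and so assume that $\diam Y_n$ is uniformly bounded.

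Set $X := \limo (X_n, y^0_n)$ and $Y := \limo(Y_n, y^0_n) \subset X$. The core of the argument is to verify the hypothesis of Lemma \ref{cone over R tree} for $Y$, namely that every ball of radius $\pi \sinh r_0$ of $Y$ is an $\R$-tree. First, $Y$ is geodesic: given $y = \limo y_n$ and $y' = \limo y'_n$ in $Y$, a geodesic $\gamma_n$ from $y_n$ to $y'_n$ in $X_n$ lies in $Y_n^{+ 10 \delta_n}$ by quasi-convexity, so approximating $\gamma_n$ pointwise by points of $Y_n$ (with error $10 \delta_n$, which vanishes in the $\omega$-limit) produces an $X$-geodesic from $y$ to $y'$ contained in $Y$. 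Second, any 4-tuple contained in a ball of radius $\pi \sinh r_0$ of $Y$ is \oasly\ realised by a 4-tuple of $Y_n$ sitting in a single ball of radius $\pi \sinh r_0$ of $X_n$, which is $\delta_n$-hyperbolic by hypothesis; the Gromov four-point inequality therefore holds with defect $\delta_n$, whose $\omega$-limit has defect $0$. Combined with geodesicity, each such ball of $Y$ is a $0$-hyperbolic geodesic space, hence an $\R$-tree.

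Invoking Lemma \ref{cone over R tree} gives that $C(Y)$ is $\ln 3$-hyperbolic. Because $\diam Y_n$ is bounded, Lemma \ref{inversion cone ultra limit} identifies $C(Y)$ with $\limo C(Y_n)$; and since $\diam C(Y_n) \leq 2 r_0$, Proposition \ref{reverse ultra limit of hyperbolic space} applied with $\delta = \ln 3$ and $\delta' = \ln 3 + \epsilon$ then forces $C(Y_n)$ to be $(\ln 3 + \epsilon)$-hyperbolic \oasly, contradicting the choice of $Y_n$. The delicate step I expect to have to be careful with is the second one above: certifying the $0$-hyperbolicity of a ball of $Y$ really requires that the approximating 4-tuple of $Y_n$ fit into one honest ball of $X_n$ of radius $\pi \sinh r_0$ (possibly after shifting the centre and paying a vanishing error), since the hypothesis on $X_n$ is only local. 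This is also precisely where quasi-convexity intervenes, ensuring that in the limit geodesics of $X$ between points of $Y$ live inside $Y$ so that the Gromov inequalities computed in $X_n$ descend to genuine inequalities in $Y$.
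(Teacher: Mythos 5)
Your overall strategy --- argue by contradiction, pass to an ultra-limit, reduce to Lemma \ref{cone over R tree}, and return via Lemma \ref{inversion cone ultra limit} and Proposition \ref{reverse ultra limit of hyperbolic space} --- is exactly the paper's, and the second half of your argument (geodesicity of the limit via quasi-convexity, local $0$-hyperbolicity, the final contradiction) is sound. The gap is in your reduction to the case where $\diam Y_n$ is uniformly bounded. You argue that, since $\diam C(Y_n)\leq 2r_0$, the four witness points of the failure of $(\ln 3+\epsilon)$-hyperbolicity project into a fixed ball of $Y_n$ around $y^0_n$. But the cone metric saturates as soon as the base distance exceeds $\pi\sinh r_0$: the angle $\angle{y}{y'}$ is capped at $\pi$, so two points of $Y_n$ arbitrarily far apart in $X_n$ are still at cone-distance at most $r+r'\leq 2r_0$. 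Hence the projections of the witnesses need not lie in any ball $B(y^0_n,R)$ with $R$ independent of $n$, and intersecting $Y_n$ with such a ball may simply discard them. Moreover, without uniformly bounded $\diam Y_n$ you cannot invoke Lemma \ref{inversion cone ultra limit}: when $\diam Y_n$ is unbounded, $\limo C(Y_n)$ contains points $\limo (y_n,r_n)$ with $\distX{y^0_n}{y_n}$ unbounded which do not come from $C\left(\limo Y_n\right)$, so the identification of the limit with a cone over (a subset of) an $\R$-tree breaks down and the witness $4$-tuple may not survive into $C\left(\limo Y_n\right)$.

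The paper's device is different and is the one you should adopt: replace the metric on $X_n$ by the truncated metric $\min\left\{\pi\sinh r_0,\ \dist{X_n}{x}{x'}\right\}$, obtaining $\tilde X_n$ and $\tilde Y_n$. Since the cone distance depends on base distances only through $\min\left\{\pi, d/\sinh r_0\right\}$, the cones $C(Y_n)$ and $C(\tilde Y_n)$ are isometric, while now $\diam \tilde Y_n\leq \pi\sinh r_0$ uniformly; your limit argument then runs verbatim on the $\tilde Y_n$. (Incidentally, the genuinely delicate point is this diameter reduction, not the step you flag at the end: the local four-point verification and the containment of limit geodesics in $Y$ are handled by the truncation together with quasi-convexity, essentially as you describe.)
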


\begin{proof}
	Assume that the proposition is false. For all $n \in \N$, we can find 
	\begin{itemize}
		\item a geodesic space $X_n$, whose balls of radius $\pi \sinh r_0$ are $\delta_n$-hyperbolic, with $\delta_n = o(1)$,
		\item a $10 \delta_n$-quasi-convex subset $Y_n$ of $X_n$,
	\end{itemize}
	such that the cone $C(Y_n)$ is not $(\ln3 + \epsilon)$-hyperbolic. We denote by $\tilde X_n$, the space $X_n$ endowed with the following metric
	\begin{displaymath}
		\dist {\tilde X_n}x{x'} = \min \left\{ \pi \sinh r_0, \dist{X_n}x{x'}\right\}
	\end{displaymath} 
	The set $Y_n$ viewed as a subspace of $\tilde X_n$ is denoted by $\tilde Y_n$.
	We choose a non-principal ultra-filter $\omega$, the limit space $\tilde X = \limo \tilde X_n$ and the subspace $\tilde Y = \limo \tilde Y_n$.
	Each ball of radius $\pi \sinh r_0$ of $\tilde Y_n$ is $\delta_n$-hyperbolic. 
	Hence each ball of radius $\pi \sinh r_0$ of $\tilde Y$ is 0-hyperbolic.  
	Moreover $Y_n$ is $10 \delta_n$ quasi-convex. 
	It follows that, for all $y \in \tilde Y$, the set $\tilde Y \cap B(y, \pi \sinh r_0)$ is  an $\R$-tree. 
	Using Lemma \ref{cone over R tree}, $C(\tilde Y)$ is $\ln 3$-hyperbolic. 
	Moreover, the diameter of $\tilde Y_n$ is uniformly bounded. 
	Hence Lemma \ref{inversion cone ultra limit} tells us that $C(\tilde Y)$ and $\limo C(\tilde Y_n)$ are isometric. 
	Thus there exists $n \in \N$ such that $C(\tilde Y_n)$ is $(\ln 3+\epsilon)$-hyperbolic. 
	However $C(Y_n)$ and $C(\tilde Y_n)$ are isometric.
	Contradiction.
\end{proof}

\subsection{Group acting on a cone}

\begin{defi}
	Let $X$ be a metric space and $G$ a group acting on $X$ by isometries. For all $g \in G$ the translation length of $g$, denoted by $[g]_X$ (or simply $[g]$) is 
	\begin{displaymath}
		[g]_X = \inf_{x \in X} \distX{x}{gx}
	\end{displaymath}
	The injectivity radius of $G$ on $X$ is $\rinj GX = \inf_{g \in G\setminus \{1\}} [g]_X$.
\end{defi}

Let $Y$ be a metric space. We fix a group $H$ acting by isometries on $Y$. 
We assume that this action is proper, that is for all $y \in Y$ there exists $r>0$ such that the set $\left\{ h \in H / h. B(y,r) \cap B(y,r) \neq \emptyset \right\}$ is finite. 
We denote by $\bar Y$ the quotient $Y/H$ and by $\bar y$ the image in $\bar Y$ of a point $y \in Y$. 
Since $H$ acts properly on $Y$, the quotient $\bar Y$ may be endowed with a metric defined by
\begin{math}
	\dist{\bar Y} {\bar y} {\bar y'} = \inf_{h \in H} \dist Y {y} {hy'} 
\end{math}

\paragraph{} We extend the action of $H$ to the cone $C(Y)$ by homogeneity: 
If $x=(y,r)$ is a point of $C(Y)$ and $h$ an element of $H$, then $hx$ is defined by $hx = (hy,r)$. 
The group $H$ acts by isometries on $C(Y)$. 
Note that, if $Y$ is not compact, this action is no more proper. 
However the relation
\begin{math}
	\distX {\bar x} {\bar x'} = \inf_{h \in H} \dist {C(Y)} {x} {hx'} 
\end{math}
still defines a metric on the quotient $C(Y)/H$.  Moreover $C(Y/H)$ and $C(Y)/H$ are isometric.

\begin{theo}[First hyperbolicity theorem]
\label{first hyperbolicity theorem}
	Let $\epsilon >0$. 
	There exists $\delta >0$ satisfying the following property. 
	Let $X$ be a $\delta$-hyperbolic, geodesic space and $Y$ a $10 \delta$-quasi-convex subset of $X$. 
	Assume that $H$ is a group acting by isometries on $X$ such that  $H$ stabilizes $Y$. If $\rinj HY > 2 \pi \sinh r_0$, then the space $C(Y)/H$ is $(\ln 3 + \epsilon)$-hyperbolic.
\end{theo}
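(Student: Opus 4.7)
The plan is to argue by contradiction via ultra-limits, in the spirit of Proposition \ref{cone over hyperbolic space}. Recall from the preceding discussion that $C(Y)/H$ and $C(Y/H)$ are isometric, so it is enough to show $C(\bar Y)$ is $(\ln 3 + \epsilon)$-hyperbolic, where $\bar Y := Y/H$. The cone $C(\bar Y)$ has diameter at most $2r_0$ (any two points meet at the vertex), and the cone distance depends on the base only through angles truncated at $\pi$; so replacing $d_{\bar Y}$ by $\min(\pi\sinh r_0, d_{\bar Y})$ leaves $C(\bar Y)$ unchanged and produces a bounded-diameter base $\tilde Y$, well-suited for Lemma \ref{inversion cone ultra limit}.

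Suppose the theorem fails for some $\epsilon > 0$. Then one builds sequences $\delta_n \to 0$, geodesic $\delta_n$-hyperbolic spaces $X_n$, $10\delta_n$-quasi-convex subsets $Y_n \subset X_n$, and groups $H_n$ with $\rinj{H_n}{Y_n} > 2\pi\sinh r_0$, such that $C(\bar Y_n)$ is not $(\ln 3 + \epsilon)$-hyperbolic. Fix a non-principal ultra-filter $\omega$ and basepoints, and set $\tilde Y := \limo \tilde Y_n$. Lemma \ref{inversion cone ultra limit} gives $C(\tilde Y) \cong \limo C(\tilde Y_n) = \limo C(\bar Y_n)$. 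If I can show $C(\tilde Y)$ is $\ln 3$-hyperbolic, Proposition \ref{reverse ultra limit of hyperbolic space} applied to the uniformly bounded sequence $C(\bar Y_n)$ forces $C(\bar Y_n)$ to be $(\ln 3 + \epsilon)$-hyperbolic $\omega$-almost surely, contradicting the construction.

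By Lemma \ref{cone over R tree} it suffices to show that every ball of radius $\pi\sinh r_0$ in $\tilde Y$ is an $\R$-tree. Here the injectivity radius hypothesis is the key input: for any $h \in H_n \setminus \{1\}$ and any $y',y'' \in Y_n$, one has $d(y',hy'') \geq [h]_{Y_n} - d(y',y'') > 2\pi\sinh r_0 - d(y',y'')$. In particular, whenever $d(y',y'') < \pi\sinh r_0$, the identity realises the infimum defining $d_{\bar Y_n}$, so $d_{\bar Y_n}(\bar y',\bar y'') = d(y',y'')$. Thus, on scales below $\pi\sinh r_0$, the projection $Y_n \to \bar Y_n$ is a local isometry, and balls of $\tilde Y_n$ inherit both the $\delta_n$-hyperbolicity of $X_n$ and the $10\delta_n$-quasi-convexity of $Y_n$. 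Passing to the $\omega$-limit exactly as in Proposition \ref{cone over hyperbolic space}, those balls become $0$-hyperbolic and geodesic in the limit, hence $\R$-trees.

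The main obstacle is the scale mismatch: the injectivity bound guarantees the quotient is isometric only on scales strictly less than $\pi\sinh r_0$, whereas the $\R$-tree condition is needed for the full ball of radius $\pi\sinh r_0$. The strict inequality $\rinj{H}{Y} > 2\pi\sinh r_0$ supplies a positive margin so that, in the ultra-limit, a ball of radius $\pi\sinh r_0$ can be exhausted by sub-balls where the quotient remains isometric onto a piece of $Y_n$; since $\delta_n \to 0$ kills the quasi-convexity defect and no two $H_n$-orbits come arbitrarily close within such a ball, the $\R$-tree structure propagates cleanly to the full radius and the contradiction closes.
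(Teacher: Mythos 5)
Your argument is correct and follows essentially the same route as the paper: the paper's entire proof consists of observing that, because of the injectivity radius bound, the pair $\left( X/H, Y/H\right)$ satisfies the hypotheses of Proposition \ref{cone over hyperbolic space}, so that $C(Y/H)$, which is isometric to $C(Y)/H$, is $(\ln 3+\epsilon)$-hyperbolic. Your explicit computation that the projection $Y\rightarrow Y/H$ is isometric at scales below $\pi\sinh r_0$ (at most one $h\in H$ can bring $hy''$ within $\pi\sinh r_0$ of $y'$) is precisely the justification the paper leaves implicit, your re-running of the ultra-limit argument merely inlines the proof of that proposition, and the ``scale mismatch'' you worry about at the end is harmless, since the triangles relevant to Lemma \ref{cone over R tree} have perimeter less than $2\pi\sinh r_0$ and hence sides of length less than $\pi\sinh r_0$, which is exactly the scale at which your local isometry holds.
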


\begin{proof}
	We consider the constant $\delta >0$ given by Proposition \ref{cone over hyperbolic space}.
	Let $X$ be a $\delta$-hyperbolic, geodesic space and $Y$ a $10 \delta$-quasi-convex subset of $X$. 
	Assume that $H$ is a group acting by isometries on $X$ such that  $H$ stabilizes $Y$ and $\rinj HY > 2 \pi \sinh r_0$. 
	The spaces $X/H$ and $Y/H$ satisfy the assumptions of Proposition \ref{cone over hyperbolic space}. It follows that $C(Y/H)$, which is isometric to $C(Y)/H$, is $(\ln 3 + \epsilon)$-hyperbolic.
\end{proof}

\subsection{Contracting balls in a cone}

\cmath{Section \ref{contracting ball in a cone} is new. It explains how contracting small balls in a cone. 
This is used in order to prove Theorem \ref{asphericity universal cover}.}

\label{contracting ball in a cone}
	\paragraph{}
	In this section we assume that $Y$ is a proper metric space.
	The cone over $Y$ is contractible, nevertheless it is not necessarily locally contractible.
	To avoid this problem, we consider the following geometric assumption.
	
	\paragraph{Condition H(l): } For all $y \in Y$, for all $r \in \R_+$, there exists a homotopy 
	\begin{math}
		h : \bar B(y, r) \times [0,1] \rightarrow Y
	\end{math}
	contracting the closed ball $\bar B(y,r) $ to $\{  y \}$ with the following property: for all $y' \in \bar B(y, r)$, for all $t \in [0,1]$, $\distX y{h(y',t)} \leq \distX y{y'} + l$.
	
	\begin{prop}
	\label{contracting ball contained in a cone}
	Let $x$ be a point of the cone $C(Y)$ and $r_1 \in \R^+$.
	If $Y$ satisfies the condition H(l), then the closed ball $\bar B(x,r_1)$ is contractible in $B(x,r_1+l)$
	\end{prop}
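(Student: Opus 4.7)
The plan is to construct the contraction explicitly, using condition H($l$) to handle the angular component in $Y$ together with a radial interpolation along cone generators, then to verify the distance bound by transporting a Lipschitz estimate from the hyperbolic plane.

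If $x=v$ is the vertex, the radial retraction $H((y',r'),t)=(y',(1-t)r')$ contracts $\bar B(v,r_1)$ to $\{v\}$ while staying inside $\bar B(v,r_1)\subset B(v,r_1+l)$, since $d(v,(y',(1-t)r'))=(1-t)r'\leq r_1$ throughout. So assume $x=(y_*,\rho)$ with $\rho>0$. I apply condition H($l$) at $y_*$ with radius $R=\pi\sinh r_0$, producing a homotopy $h:\bar B(y_*,R)\times[0,1]\to Y$ contracting $\bar B(y_*,R)$ to $\{y_*\}$ and satisfying $d_Y(y_*,h(y',t))\leq d_Y(y_*,y')+l$. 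The cone contraction is then defined in two stages: for $t\in[0,1/2]$, set $H((y',r'),t)=(h(y',2t),r')$, which at $t=1/2$ sends every point onto the cone generator through $x$; for $t\in[1/2,1]$, slide along that generator by $H((y',r'),t)=(y_*,(2-2t)r'+(2t-1)\rho)$, arriving at $x$.

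The distance estimate is obtained by reading the cone distance to $x$ as a hyperbolic plane distance. Writing $D(\theta)=\arccosh(\cosh\rho\cosh r'-\sinh\rho\sinh r'\cos\theta)$, one has $d(x,H((y',r'),t))=D(\theta(y_*,h(y',2t)))$ throughout Stage~1. Since $D$ is monotone increasing in $\theta$ and Lipschitz with constant $\sinh r'\leq\sinh r_0$ (the angular arc length of the $r'$-sphere in $\mathbb{H}^2$), and H($l$) gives $\theta(y_*,h(y',2t))-\theta(y_*,y')\leq l/\sinh r_0$ as long as both angles remain $\leq\pi$, we conclude $D(\theta_t)\leq D(\theta_0)+l\leq r_1+l$. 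Stage~2 only decreases the distance to $x$: $d(x,H((y',r'),t))=(2-2t)|r'-\rho|\leq|r'-\rho|\leq d(x,(y',r'))\leq r_1$.

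The main obstacle is the case $\rho<r_1$, where $\bar B(x,r_1)$ contains antipodal points with $\theta(y_*,y')=\pi$ (equivalently $d_Y(y_*,y')\geq\pi\sinh r_0$), so $y'\notin\bar B(y_*,R)$ and the homotopy $h$ does not apply to them. For such points necessarily $r'\leq r_1-\rho$, and one contracts them through the vertex instead: $(y',r')\mapsto(y',(1-2t)r')$ reaches $v$ at $t=1/2$, and Stage~2 then carries $v$ to $x$, with all intermediate cone distances to $x$ bounded by $r_1$. The delicate point is gluing this antipodal construction continuously with the H($l$)-based one at the transition $\theta(y_*,y')\to\pi$; this can be achieved by interpolating between the two Stage-1 definitions via a continuous cutoff of $\theta(y_*,y')$ near $\pi$, checking that the Lipschitz bound on $D$ keeps the trajectory inside the budget $r_1+l$ throughout.
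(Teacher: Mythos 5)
Your main case is essentially the paper's proof: when the vertex lies outside $\bar B(x,r_1)$, every point $(y',r')$ of the ball has $d_Y(y_*,y')\leq\pi\sinh r_0$, so the H($l$) homotopy applies uniformly; you contract horizontally and then collapse the resulting segment of the generator through $x$, and your Lipschitz estimate $|dD/d\theta|\leq\sinh r'$ combined with the angular increment $l/\sinh r_0$ gives exactly the paper's bound $d(x,H(x',t))\leq d(x,x')+l$. That part is correct.

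The gap is in your treatment of the case where the ball contains the vertex but $x\neq v$ (your $0<\rho\leq r_1$). There you propose to run the H($l$) homotopy on the points with $\theta(y_*,y')<\pi$, contract the ``antipodal'' points through the vertex, and glue the two constructions ``via a continuous cutoff of $\theta(y_*,y')$ near $\pi$, checking that the Lipschitz bound keeps the trajectory inside the budget.'' This checking is precisely the content of the claim and is not carried out: the transition region is the set of $y'$ with $d_Y(y_*,y')$ near $\pi\sinh r_0$, on which you would be interpolating between two homotopies with no common value, and you give no argument that the interpolated trajectories stay in $B(x,r_1+l)$. As written this is an unproved assertion, not a proof. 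Moreover the whole difficulty is artificial: the radial contraction $H((y',r'),t)=(y',(1-t)r')$, which you already use for $x=v$, works verbatim for every point of $\bar B(x,r_1)$ whenever $v\in\bar B(x,r_1)$, because convexity of the metric of $\HP 2$ gives
\begin{displaymath}
\distX{x}{\bigl(y',(1-t)r'\bigr)}\leq\max\left\{\distX{x}{(y',r')},\distX{x}{v}\right\}\leq r_1 ,
\end{displaymath}
with no use of H($l$) and no case split on $\theta$. This is how the paper handles that case ($r_1\geq r$), and it contracts the ball inside itself to the single point $v$. Replace your hybrid construction by this observation and your proof is complete; as it stands, the gluing step is a genuine hole.
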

	
	\begin{proof}
		We denote by $(y,r)$ the point $x$ and by $\bar B$ the closed ball $\bar B(x,r_1)$.
		We distinguish two cases.
		\subparagraph{Case 1.} If $r_1\geq r$, then the vertex of the cone $v$ belongs to $\bar B$.
		We consider the following homotopy.
		\begin{displaymath}
			\begin{array}{rccc}
				H : 	& \bar B \times [0,1] 	& \rightarrow 	& C(Y)\\
						& \Big((y',r'), t \Big)		& \rightarrow 	& \Big( y', (1-t)r'\Big)\\
			\end{array}
		\end{displaymath}
		
			It contracts the ball $\bar B$ to the vertex $v$.
			Let $x' = (y',r')$ be a point of $\bar B$.
			The metric in $\HP 2$ is convex. 
			Since the metric on the cone is modelled on the one on $\HP 2$, we have for all $t \in [0,1]$,
			\begin{displaymath}
				\distX x{H(x',t)} = \distX x{\Big(y',(1-t)r'\Big)} \leq \max \left\{ \distX x{x'} , \distX xv\right\} \leq r_1
			\end{displaymath}
			Thus $H$ maps to $\bar B$.
			
		\subparagraph{Case 2.} We assume now that $r_1 <r$.
		It follows that for all $(y',r') \in  \bar B$, $\distX y{y'} \leq \pi \sinh r_0$.
		By assumption, there exists a homotopy $h : \bar B (y, \pi \sinh r_0) \times [0,1] \rightarrow Y$ contracting $\bar B(y, \pi \sinh r_0)$ to $\{y\}$ such that for all $y' \in \bar B(y, \pi \sinh r_0)$, for all $t \in [0,1]$, $\distX y{h(y',t)} \leq \distX y{y'} + l$.
		We consider the following map.
		\begin{displaymath}
			\begin{array}{rccc}
				H : 	& \bar B \times [0,1] 	& \rightarrow 	& C(Y) \\
						& \Big( (y',r'),t \Big)		& \rightarrow 	& \Big (h(y',t),r' \Big) \\
			\end{array}
		\end{displaymath}
		It contracts $\bar B$ to $\{y\} \times [r-r_1 , \min \{r_0, r+r_1\}]$.
		Let $x' = (y',r')$ be a point of $\bar B$ and $t \in [0,1]$.
		By assumption, we have $\theta(y,h(y',t)) \leq \theta(y,y') + \alpha$, where $\alpha = \min\left\{ \frac l{\sinh r_0}, \pi - \theta(y,y')\right\}$.
		The distances between $x$, $x'$ and $H(x',t)$ may be viewed in $\HP 2$.
		Thanks to the triangle inequality, we have:
		\begin{eqnarray*}
			\distX x{H(x',t)}
			& = & \arccosh \left( \cosh r \cosh r' - \sinh r \sinh r' \cos \theta(y,h(y',t))\right) \\
			& \leq & \arccosh \left( \cosh r \cosh r' - \sinh r \sinh r' \cos \theta(y,y)\right) \\
			&		  &+ \arccosh \left( \cosh^2 r' - \sinh^2 r' \cos \alpha\right) \\
			& \leq & \distX x{x'} + \alpha \sinh r'  \leq r_1 +l \\
		\end{eqnarray*}
		Consequently, $H$ maps to $B(x,r_1 +l)$.
		We conclude by noticing that $\{ y \} \times [r-r_1 , \min \{r_0, r+r_1\}]$ is contractible in $B(x,r_1 +l)$.
	\end{proof}
	
	The next lemma explains how to deformation retract a ball of the cone onto its base.
	\begin{prop}
	\label{contracting ball to the base in the cone}
		Let $x=(y,r)$ be a point of the cone $C(Y)$ and $r_1 \in ]r_0-r , r[$.
		Assume that $Y$ satisfies the condition H(l).
		Then there exists a homotopy $H : \bar B(x,r_1) \times [0,1] \rightarrow \bar B (x, r_1 + 2l)$  which contracts $\bar B(x,r_1)$ to a subset of $\iota(Y)$ and such that for all $x' \in \bar B(x,r_1) \cap \iota (Y)$, for all $t \in [0,1]$, $H(x',t) = x'$.
	\end{prop}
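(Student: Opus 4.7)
The plan is to construct the homotopy by pushing each point $(y',r')$ radially outward to $r_0$ while simultaneously applying the $H(l)$-contraction $h$ to its base coordinate, the contraction being modulated so as to vanish on $\iota(Y)$. Before giving the construction, I record two consequences of the hypotheses: since $r_1<r$, the vertex $v\notin\bar B(x,r_1)$, hence every $(y',r')$ in the ball has $r'>0$; and since $r_1>r_0-r$, the point $(y,r_0)$ belongs to $\iota(Y)\cap\bar B(x,r_1)$, so this intersection is nonempty and provides a ``base point'' for the retraction.

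A natural candidate is
\begin{equation*}
H\bigl((y',r'),t\bigr)=\bigl(h(y',\,t\sigma(r')),\;(1-t)r'+t r_0\bigr),
\end{equation*}
where $h$ denotes the $H(l)$-contraction to $\{y\}$ applied in a ball of $Y$ containing $p\bigl(\bar B(x,r_1)\bigr)$, and $\sigma:[0,r_0]\to[0,1]$ is a continuous function vanishing at $r_0$, for instance $\sigma(r')=1-r'/r_0$. By construction $H(\cdot,0)=\id$, the map $H(\cdot,1)$ takes values in $\iota(Y)$, and the conditions $\sigma(r_0)=0$ together with $h(y',0)=y'$ ensure that each point of $\iota(Y)\cap\bar B(x,r_1)$ is fixed throughout.

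The essential point is to verify $\distL{x}{H(x',t)}\leq r_1+2l$, which I would do by arguing as in the proof of Proposition~\ref{contracting ball contained in a cone} and splitting the deformation into its angular and radial parts. The angular shift from $y'$ to $h(y',t\sigma(r'))$ costs at most $l$: using $\distL{y}{h(y',s)}\leq\distL{y}{y'}+l$ one gets $\angle{y}{h(y',s)}\leq\angle{y}{y'}+l/\sinh r_0$, which in the cone translates, via the same hyperbolic trigonometric estimate as in the previous proposition, into an additional distance of at most $(l/\sinh r_0)\sinh r''\leq l$. The radial shift from $r'$ to $r''=(1-t)r'+tr_0$ is then controlled using convexity of the hyperbolic distance along a radial ray, together with the slack $r_0-r'\leq r_0-(r-r_1)<2r_1$ coming from the hypothesis $r_1>r_0-r$; this accounts for the remaining $l$ of the budget.

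The main obstacle is precisely this distance estimate, since in hyperbolic geometry a radial expansion at nonzero angular coordinate can substantially inflate the distance to $x$. The construction must therefore couple the $y$-contraction to the $r$-expansion in the correct way---through a judicious choice of $\sigma$---so that the two sources of distortion combine to yield at most $r_1+2l$. If the direct estimate above falls short, the natural remedy is to split $H$ into two sub-homotopies (first apply $h$ in the base, then expand radially, with $\sigma$ ensuring that $\iota(Y)$ is fixed in each stage), each of which can be bounded separately by the same techniques.
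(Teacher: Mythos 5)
There is a genuine gap, and it sits exactly where you flagged the ``main obstacle''. Your one-step formula $H\bigl((y',r'),t\bigr)=\bigl(h(y',t\sigma(r')),(1-t)r'+tr_0\bigr)$ cannot satisfy the distance bound, for a reason independent of the choice of $\sigma$: since $\sigma(r')<1$ for every $r'>0$, the time argument $t\sigma(r')$ never reaches $1$, and condition H($l$) gives you no control on $\angle{y}{h(y',s)}$ for $s<1$ beyond $\angle y{y'}+l/\sinh r_0$. So at times $t$ close to $1$ a point $(y',r')$ sitting at the ``widest'' part of the ball (angle close to $L(r')=\arccos\bigl(\frac{\cosh r'\cosh r-\cosh r_1}{\sinh r'\sinh r}\bigr)$, with $r'$ well below $r_0$) has been pushed out to radius near $r_0$ while its angle may still be near $L(r')$. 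Since the ball narrows as $r'\to r_0$ (i.e.\ $L(r_0)$ is much smaller than $L(r')$ in general --- this is the content of the paper's Figure 2), the point $(y'',r_0)$ with $\angle y{y''}\approx L(r')$ lies at distance from $x$ exceeding $r_1$ by an amount governed by $r,r_1,r_0$, not by $l$. Your appeal to convexity along the radial ray only bounds the distance by the \emph{endpoint} value $\distX{x}{\iota(y'')}$, which is precisely the quantity that is too large unless the angle has already been contracted to within roughly $L(r_0)$; the ``slack $r_0-r'<2r_1$'' does not help here.

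Your fallback --- split into two sub-homotopies, first contract in the base, then expand radially --- is indeed the paper's proof, but the work lies in making the first stage both (a) strong enough that its image lands in the narrow top of the ball, and (b) trivial on $\iota(Y)$. The paper does this by choosing, via continuity of $L$, a collar $r_2<r_0$ on which $L(r')\leq L(r_0)+l$, and, via compactness, a finite time $t_0$ with $\distX y{h(y',t_0)}\leq L(r_0)$ for all relevant $y'$; the first homotopy then runs $h$ up to time $t_0$ for $r'\leq r_2$ and up to the damped time $\frac{r_0-r'}{r_0-r_2}t_0$ on the collar. Points in the collar are not fully contracted, but they did not need to be, since the ball is already narrow there --- this is what makes the damping compatible with the distance bound (the paper's Lemma \ref{projection of E on the base}: every point of the intermediate set $E$ has angle at most $L(r_0)+2l$, after which the radial push-out is controlled by convexity). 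None of these quantitative choices ($L$, $r_2$, $t_0$, the collar-restricted damping) appear in your proposal, and without them neither stage of the two-step plan is justified.
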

	
	\rem 
	The first idea to prove this proposition is to project the cone onto its base, using the map $p$.
	Nevertheless, this homotopy does not stay in a neighbourhood of $\bar B(x,r_1)$.
	This problem can be observed on Figure \ref{shape of a ball} which represents the shape of $\bar B(x,r_1)$.
	In order to get around this difficulty, we have to proceed in two steps.
	At first, we contract the ball horizontally, using a homotopy of $Y$, then we project it onto the base.
	
	\begin{figure}[h]
	\begin{center}
	\begin{tikzpicture}
		\draw [>=stealth, ->] (0,0) -- (5,0);
		\draw (5,0) node [above] {$\distX y{y'}$};
		\draw [>=stealth, ->] (0,0) -- (0,-5);
		\draw (0,-5) node [left] {$r'$};
		\draw (0,0) node [left, above] {$(0,0)$};
	 	\draw [dashed] (0,-4) -- (4.5,-4);
	 	\draw (0,-4) node [left] {$r_0$};
	 	\draw[fill=gray!20, thick] 
	 						(0,-0.5) 	.. controls +(2,0) and +(0,0.5) .. (4,-1)
	 										.. controls +(0,-0.5) and +(0.7,2.5) .. (0.5,-4)
	 										-- (0,-4);
	 	\draw (1,-1.5) node {$\bar B(x,r_1)$};
	\end{tikzpicture} \newline
	 The points $(y',r')$ which belong to $\bar B(x,r_1)$ lie in the gray part.
	\end{center}
	\caption{Shape of the ball $\bar B(x,r_1)$.}
	\label{shape of a ball}
	\end{figure}

	\begin{proof}	
		We denote by $\bar B$ the closed ball $\bar B(x, r_1)$.
		Since $r_1 < r$, for all points $x'=(y',r')$ of $\bar B$, we have $\distX y{y'} \leq \pi \sinh r_0$.
		By assumption there exists a homotopy $h : \bar B(y, \pi \sinh r_0) \times [0,1] \rightarrow Y$ contracting $\bar B(y, \pi \sinh r_0)$ to $\{y \}$ such that for all $y' \in \bar B (y, \pi \sinh r_0)$,  for all $t \in [0,1]$, $\distX y{h(y',t)} \leq \distX y{y'} + l$.
		
		\paragraph{} Let $x'=(y',r')$ be a point of $C(Y)$.
		There exists a continuous function $L(r')  = \arccos\left( \frac{\cosh r' \cosh r - \cosh r_1}{\sinh r' \sinh r}\right)$ such that $x'$ belongs to $\bar B$ if and only if $\distX y{y'} \leq L(r')$.
		Since $r_1 > r_0-r$, $L(r_0)$ is positive.
		By continuity, there exists $r_2 < r_0$ such that for all $r' \in  ]r_2, r_0[$, $L(r') \leq L(r_0) + l$.
		Since $h$ is continuous on a compact set, there exists $t_0$ such that for all $y' \in \bar B(y, \pi \sinh r_0)$, $\distX y{h(y',t_0)} \leq L(r_0)$.
		We consider now the following map.
		\begin{displaymath}
			\begin{array}{rcccl}
				H : 	& \bar B \times [0,1] 	& \rightarrow 	& C(Y)																					& \\
						& \Big((y',r'),t\Big)		& \rightarrow 	& \big( h\left(y', tt_0 \right), r'\big) 									& \text{ if } r' \leq r_2 \\
						& \Big((y',r'),t\Big)		& \rightarrow 	& \left( h\left(y', \frac{r_0-r'}{r_0-r_2}tt_0\right), r'\right) 	& \text{ if } r' > r_2 \\
			\end{array}
		\end{displaymath}
		The map $H$ is continuous.
		Furthermore, for all $x' \in \bar B \cap \iota(Y)$, for all $t \in [0,1]$, $H(x',t) = x'$.
		As in the previous proposition, we prove that $H$ maps to $B(x,r_1 + l)$.
		We denote by $E$ the image of $\bar B \times \{1\}$ by $H$.
		
		\begin{lemm}
		\label{projection of E on the base}
			The set $E$, image by $H$ of $\bar B \times \{1\}$, has the following property.
			For all $x'=(y',r') \in C(Y)$, if $x'$ belongs to $E$, then $\distX y{y'} \leq L(r_0) + 2l$.
		\end{lemm}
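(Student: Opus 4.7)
The plan is to unwind the definition of $E$ and proceed by a case analysis on the radial coordinate $r'$, using separately the two regimes in the definition of $H$.

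First, I would observe that any point of $E$ is by construction of the form $(h(y',s), r')$ for some $(y',r') \in \bar B$ and some parameter $s \in [0, t_0]$ (with $s$ determined by whether $r' \leq r_2$ or $r' > r_2$). Since the radial coordinate is preserved by $H$, it suffices to estimate $\distX{y}{h(y',s)}$ for the given $s$.

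In the first regime $r' \leq r_2$, the definition forces $s = t_0$, and the choice of $t_0$ gives directly $\distX{y}{h(y',t_0)} \leq L(r_0)$, which is certainly $\leq L(r_0) + 2l$. In the second regime $r' > r_2$, one has $s = \frac{r_0 - r'}{r_0 - r_2} t_0 \in [0, t_0]$, and the key input is the defining property of condition H($l$): regardless of the value of $s$, $\distX{y}{h(y',s)} \leq \distX{y}{y'} + l$. Since $(y',r') \in \bar B$, the description of $\bar B$ in terms of $L$ gives $\distX{y}{y'} \leq L(r')$, and the choice of $r_2$ was made precisely so that $L(r') \leq L(r_0) + l$ on $]r_2, r_0[$. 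Combining these two inequalities yields $\distX{y}{h(y',s)} \leq L(r_0) + 2l$.

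There is no real obstacle: the whole point of introducing $r_2$ and the piecewise definition of $H$ in the preceding construction is to arrange these two estimates, so the lemma is essentially a bookkeeping exercise verifying that the construction behaves as advertised. The only mild subtlety is to remember that the H($l$) bound $\distX{y}{h(y',s)} \leq \distX{y}{y'} + l$ is uniform in $s$, so we do not need to track the interpolation factor $\frac{r_0-r'}{r_0-r_2}$ beyond observing that it stays in $[0,1]$.
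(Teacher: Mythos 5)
Your proof is correct and follows essentially the same route as the paper's: case $r'\leq r_2$ is handled by the choice of $t_0$, and case $r'>r_2$ by combining the uniform H($l$) bound with $\distX{y}{y'}\leq L(r')\leq L(r_0)+l$. The paper's own argument is exactly this two-case bookkeeping, so there is nothing to add.
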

		
		\begin{proof}
			Let $x'=(y',r')$ be a point of $\bar B$.
			If $r' \leq r_2$, then $H(x',1) = (h(y',t_0),r')$.
			By construction of $t_0$, we have $\distX y{h(y',t_0)} \leq L(r_0)$.
			If $r' > r_2$, then $H(x',1) = \left( h \left(y', \frac{r_0-r'}{r_0-r_2} t_0 \right) , r' \right)$.
			By definition of $r_2$ and $h$, we have the following inequalities.
			\begin{displaymath}
				\distX y{h \left(y', \frac{r_0-r'}{r_0-r_2} t_0 \right) } \leq \distX y{y'} + l \leq L(r_0) + 2l
			\end{displaymath}
		\end{proof}
		
		\paragraph{End of the proof of Proposition \ref{contracting ball to the base in the cone}}
		We now consider a second homotopy.
		\begin{displaymath}
			\begin{array}{rccc}
				H' : 	& E \times [0,1] 		& \rightarrow 	& C(Y) \\
						& \Big( (y',r'),t \Big) 	& \rightarrow 	& \left(y',(1-t)r' + tr_0 \right)  
			\end{array}
		\end{displaymath}
		The map $H'$ contracts $E$ to a subset of $\iota(Y)$.
		Let $x'=(y',r')$ be a point of $E \subset B(x,r_1+l)$.
		By Lemma \ref{projection of E on the base}, $\distX y{y'} \leq L(r_0) +2l$.
		Thus $x'$ and $\iota(y')$ are points of $B(x,r_1 + 2l)$.
		The metric in $\HP 2$ is convex. 
		Since the metric on the cone is modelled on the one on $\HP 2$, we have for all $t \in [0,1]$,
		\begin{eqnarray*}
			\distX x{H'(x',t)} 
			& = & \distX x{\Big(y',(1-t)r'+ tr_0\Big)} \\
			& \leq & \max \left\{ \distX x{x'} , \distX x{\iota(y')}\right\} \leq r_1 + 2l \\
		\end{eqnarray*}
		Thus $H'$ maps to $B(x, r_1 + 2l)$.
		Applying successively $H$ and $H'$ provides the homotopy of the proposition.
	\end{proof}
\section{Cone-off over a metric space}
\label{part cone-off}

The goal of this part is to study the large scale geometry of the cone-off. 
We give a detailed proof that under some small cancellation assumptions, the cone-off of a hyperbolic space is locally hyperbolic.
We recall that $r_0$ is a fixed positive number whose value will be made precise in Section \ref{part small cancellation}.
\subsection{Definition}
\label{part definition cone-off}

Let  $X$ be a metric space and  $Y= (Y_i)_{i \in I}$ a collection of  subsets of $X$. For each $i\in I$, we consider the following objects.
\begin{itemize}
	\item $C_i$ is the cone $C(Y_i)$ and $v_i$ its vertex.
	\item $\iota_i : Y_i \rightarrow C_i$ and $p_i : C_i \setminus \{v_i\} \rightarrow Y_i$ are the maps between the cone and its base defined in the previous part.
\end{itemize}

\begin{defi}[Cone-off over a metric space]
	The cone-off over $X$ relatively to $Y$ is the space obtained by gluing each cone $C_i$ on $X$ along $Y_i$ according to $\iota_i$. We denote it by $\dot X(Y)$ (or simply $\dot X$).
\end{defi}

\subsubsection*{Metric on the cone-off}

\paragraph{} In this paragraph, we define a metric on the cone-off such that its restriction to a cone is the metric previously defined. 
To this end, one defines the metric as the lower bound of the length of chains joining two points. 
In this way, we obtain a pseudo-metric. 
The goal here is to prove that it is also positive.

\paragraph{}

We endow $X \sqcup \left( \bigsqcup_{i\in I} C_i\right)$ with the metric induced by $\distV{X}$ and $\distV{C_i}$.
Let $x$ and $x'$ be two points of $\dot X$.
The quantity $\distSC x{x'}$ is the minimal distance between two points of $X \sqcup \left( \bigsqcup_{i\in I} C_i\right)$ whose image in $\dot X$ are respectively $x$ and $x'$.
The value of $\distSC x{x'}$ is different whether $x$ and $x'$ both belong to a cone $C_i$ or not. 
The three possible cases are presented in the next lemma.

\paragraph{}Recall that $\mu$ is the function defined in Proposition \ref{function mu} by
\begin{displaymath}
		\mu(t) = \arccosh \left( \cosh^2 r_0 - \sinh^2 r_0 \cos \left( \min \left\{ \pi, \frac t {\sinh r_0}\right\}\right)\right)
	\end{displaymath}
It has the following interpretation.
If $y$ and $y'$ are two points of $Y_i$, then the distance between $(y,r_0)$ and $(y',r_0)$ in $C_i$ is $\mu\left(\dist X y {y'}\right)$.

\begin{lemm}
\label{some facts about distSC}
Let $x$ and $x'$ be two points of $\dot X$.
\begin{enumerate}
	\item If  there is $i \in I$ such that $x,x' \in C_i$, then $\distSC x{x'} = \dist{C_i}x{x'}$. 
	In particular, if $x,x' \in Y_i$, then $\distSC  x{x'} = \mu(\dist X x{x'})$.
	\item If $x,x' \in X$, but there is no $i \in I$ such that $x,x' \in Y_i$, then $\distSC  x{x'} = \dist X x{x'}$.
	\item In all other cases, we have $\distSC x{x'} = + \infty$.
\end{enumerate}
In particular, for all $x,x' \in X$, $\distSC x{x'} \geq \mu(\dist Xx{x'})$.
\end{lemm}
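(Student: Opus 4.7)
The strategy is a direct case analysis based on the definition of $\distSC{\cdot}{\cdot}$ as a minimum of in-component distances in the disjoint union $X \sqcup \left(\bigsqcup_{i \in I} C_i\right)$. The first step is to describe the fibres of the gluing map $\pi : X \sqcup \left(\bigsqcup_{i\in I} C_i\right) \to \dot X$. Because each cone $C_i$ is attached to $X$ only along its base via $\iota_i$, a point $x \in \dot X$ has a unique preimage in $X$ when it lies in the image of $X$, and a unique preimage in $C_i$ when it lies in the image of $C_i$. Moreover $x$ has more than one preimage precisely when it is a base point, i.e.\ $x \in Y_i$ for some $i$; in that case its preimages are $x \in X$ together with $\iota_j(x) \in C_j$ for every $j$ with $x \in Y_j$.

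Next, I would compute $\distSC{x}{x'}$ by minimising $\dist{X}{\tilde x}{\tilde x'}$ or $\dist{C_i}{\tilde x}{\tilde x'}$ over all pairs $(\tilde x, \tilde x')$ of preimages lying in the same component (pairs from different components contribute $+\infty$). Case~(iii) is exactly the situation where no component contains preimages of both $x$ and $x'$, so the minimum is $+\infty$. In case~(ii), the only common component is $X$, since by hypothesis no $Y_i$ contains both points; the unique admissible pair gives $\dist{X}{x}{x'}$. In case~(i), the cone $C_i$ is a common component and contributes $\dist{C_i}{x}{x'}$; any alternative pair requires both $x$ and $x'$ to be base points, in which case the competing distances are $\dist{X}{x}{x'}$ or $\dist{C_j}{x}{x'} = \mu(\dist{X}{x}{x'}) = \dist{C_i}{x}{x'}$ for other cones. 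Proposition~\ref{function mu} ensures $\mu \leq \id$, hence $\mu(\dist{X}{x}{x'}) \leq \dist{X}{x}{x'}$, so the cone-distance is always the smallest among the candidates and $\distSC{x}{x'} = \dist{C_i}{x}{x'}$, which specialises to $\mu(\dist{X}{x}{x'})$ when $x, x' \in Y_i$.

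The concluding ``in particular'' clause then combines cases (i) and (ii) when $x, x' \in X$: either case~(ii) applies and $\distSC{x}{x'} = \dist{X}{x}{x'} \geq \mu(\dist{X}{x}{x'})$ by $\mu \leq \id$, or $x, x'$ share some $Y_i$ and case~(i) gives $\distSC{x}{x'} = \mu(\dist{X}{x}{x'})$. The whole argument is essentially a bookkeeping exercise; the only real subtlety is to verify that no preimage configuration has been overlooked, and to invoke Proposition~\ref{function mu} to confirm that the cone distance is always the winning candidate when available.
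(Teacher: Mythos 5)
Your case analysis is correct and complete. The paper actually states Lemma \ref{some facts about distSC} without proof, treating it as immediate from the definition of $\distSC{x}{x'}$ as a minimum over preimage pairs in $X \sqcup \left( \bigsqcup_{i\in I} C_i\right)$; your description of the fibres of the gluing map and your use of $\mu \leq \mathrm{id}$ from Proposition \ref{function mu} to see that the cone distance wins whenever both points lie in a common $Y_i$ is exactly the bookkeeping the author leaves to the reader.
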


The quantity $\distSC x{x'}$ does not define a metric. 
It does not indeed satisfy the triangle inequality.
That is why we consider chains of points.

\begin{defi}
	Let $x$ and $x'$ be two points of $\dot X$. 
	\begin{itemize}
		\item A chain between $x$ and $x'$ is a finite sequence $C=(z_1,\dots, z_n)$ of points of $\dot X$, such that $z_1=x$ and $z_m=x'$. 
		\newline
		Its length is 
		\begin{math}
			l(C) = \sum_{j=1}^{m-1} \distSC{z_j}{z_{j+1}}
		\end{math}.
		\item Furthermore we define
		\begin{displaymath}
			\dist{\dot X} x{x'}  = \inf \left\{ l(C)/ C \text{ a chain between $x$ and $x'$}\right\}
		\end{displaymath}
	\end{itemize}
\end{defi}

Obviously, $|\ . \ |_{\dot X}$ is a pseudo-metric.

\begin{lemm}
\label{chain in X}
	Let $x$ and $x'$ be two points of $\dot X$. For all $\epsilon >0$ there is a chain $C=(z_1,\dots, z_m)$ between $x$ and $x'$ satisfying the following.
	\begin{enumerate}
	 	\item $l(C) \leq \dist{\dot X}x{x'} + \epsilon$
	 	\item For all $j \in \intval 2 {m-1}$, $z_j \in X$.
	\end{enumerate}
\end{lemm}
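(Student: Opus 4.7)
The plan is to start with a near-optimal chain and then, one by one, remove every intermediate point that is not in $X$ by a ``shortcut'' step, using only the triangle inequality inside a single cone. Since each shortcut never increases the total length, the desired chain will emerge after finitely many iterations.

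By the definition of $\dist{\dot X} x {x'}$ as an infimum, I pick a chain $C_0 = (z_1, \dots, z_m)$ between $x$ and $x'$ with $l(C_0) \leq \dist{\dot X} x {x'} + \epsilon$, which I may assume finite (otherwise $\dist{\dot X} x {x'} = +\infty$ and there is nothing to prove). Call an index $j \in \intval 2 {m-1}$ \emph{bad} if $z_j \notin X$. If no index is bad, I am done; otherwise fix such a $j$. Then $z_j$ lies in some $C_i \setminus Y_i$. Because $\distSC{z_{j-1}}{z_j}$ and $\distSC{z_j}{z_{j+1}}$ are finite while $z_j \notin X$, cases (2) and (3) of Lemma \ref{some facts about distSC} are excluded, and case (1) forces $z_{j-1}, z_{j+1} \in C_i$.

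I then replace $C_0$ by the shorter chain $C_1 = (z_1, \dots, z_{j-1}, z_{j+1}, \dots, z_m)$. Taking the natural lifts of $z_{j-1}$ and $z_{j+1}$ inside $C_i$ shows $\distSC{z_{j-1}}{z_{j+1}} \leq \dist{C_i}{z_{j-1}}{z_{j+1}}$, and the triangle inequality in the metric space $C_i$ gives
\begin{displaymath}
\dist{C_i}{z_{j-1}}{z_{j+1}} \leq \dist{C_i}{z_{j-1}}{z_j} + \dist{C_i}{z_j}{z_{j+1}} = \distSC{z_{j-1}}{z_j} + \distSC{z_j}{z_{j+1}}.
\end{displaymath}
Hence $l(C_1) \leq l(C_0)$, while the set of bad indices of $C_1$ is the set of bad indices of $C_0$ with $j$ removed (no other point changes its $X$-status). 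Iterating the shortcut at most $m-2$ times yields a chain $C$ between $x$ and $x'$ with $l(C) \leq \dist{\dot X} x {x'} + \epsilon$ and no bad intermediate index, as required. The only delicate point is the observation (drawn directly from Lemma \ref{some facts about distSC}) that the neighbours of a bad intermediate point must sit in the very same cone as that point; the rest is just the triangle inequality inside $C_i$, so I do not expect any real obstacle.
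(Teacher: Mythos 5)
Your proposal is correct and follows essentially the same route as the paper: take a near-optimal chain, observe that an intermediate point outside $X$ lies strictly inside a unique cone $C_i$ together with its two neighbours, and delete it using the triangle inequality in $C_i$, iterating until no such point remains. Your explicit justification (via the finiteness of the $\distSC{}{}$ terms and Lemma \ref{some facts about distSC}) that the neighbours must lie in the same cone is a detail the paper leaves implicit, but the argument is the same.
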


\begin{proof}
	Let $\epsilon >0$. By definition there exists $C = (z_1, \dots z_m)$ a chain between $x$ and $x'$ such that $l(C) \leq \dist{\dot X} x{x'} +\epsilon$.
	Assume that there is $j \in \intval 2 {m-1}$ such that $z_j$ does not belong to $X$. 
	It follows that $z_j$ is strictly contained in a cone, that is there exists $i \in I$ such that  $z_j \in C_i \setminus \iota_i(Y_i)$. 
	In particular there is only one point of $\left(\bigsqcup_{i\in I} C_i \right) \sqcup X$ whose image in $\dot X$ is $z_j$. Using the triangle inequality in $C_i$, we have
\begin{math}
	\distSC {z_{j-1}}{z_{j+1}} \leq \distSC{z_j}{z_{j+1}} + \distSC{z_{j-1}}{z_j}
\end{math}.
Thus the sequence $C'$, obtained by removing the point $z_j$ from $C$, is a chain between $x$ and $x'$ shorter than $C$. 
Hence after removing all points of $C$, which are not in $X$, we obtain a chain satisfying  the properties of the lemma.
\end{proof}

\begin{lemm}
\label{distance coincide on the cone}
	For all $i \in I$, $|\ . \ |_{C_i}$ and $|\ . \ |_{\dot X}$ locally coincide: 
	Let $x = (y,r)$ be a point of  $C_i\setminus \iota_i(Y_i)$.
	If $x'$ is a point of $\dot X$ such that $\dist{\dot X} x {x'} \leq \frac 14 |r_0 - r|$, then $x' \in C_i$ and $\dist{\dot X} x {x'} = \dist{C_i} x {x'}$.
\end{lemm}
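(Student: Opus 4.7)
The plan is to exploit the fact that on $C_i$ the radial coordinate is $1$-Lipschitz (it coincides with the distance to the vertex $v_i$), so any path from $x = (y,r)$ that leaves the interior of the cone must first cross $\iota_i(Y_i)$ and pay at least $r_0 - r$ for that crossing. Since our hypothesis only allows a budget of $\tfrac 14 (r_0 - r)$, no nearly optimal chain from $x$ to $x'$ can reach $X$ or any other cone, and this forces $x'$ itself to lie in $C_i$.

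To make this precise, fix $\epsilon \in \left(0, \tfrac 34 (r_0 - r)\right)$ and, using Lemma \ref{chain in X}, pick a chain $(z_1, \dots, z_m)$ from $x$ to $x'$ of total length at most $\dist{\dot X}{x}{x'} + \epsilon \leq \tfrac 14 (r_0 - r) + \epsilon$, with $z_2, \dots, z_{m-1} \in X$. The heart of the argument is to rule out $m \geq 3$: in that case $z_2 \in X$, while $z_1 = x$ belongs only to $C_i$ (the base $\iota_i(Y_i)$ is excluded by hypothesis, and the vertex $v_i$ is not identified with any point of $X$ or of any other cone). By Lemma \ref{some facts about distSC}, finiteness of $\distSC{z_1}{z_2}$ therefore forces $z_2 \in Y_i$, and then $\distSC{z_1}{z_2} = \dist{C_i}{x}{z_2}$. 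Since $z_2 = \iota_i(p_i(z_2))$ has radial coordinate $r_0$, the $1$-Lipschitz radial estimate gives $\distSC{z_1}{z_2} \geq r_0 - r$. This single term already exceeds the total budget $\tfrac 14 (r_0 - r) + \epsilon < r_0 - r$, contradicting that it is only one summand in the length of the chain.

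Consequently $m = 2$, and the whole chain reduces to $(x, x')$ with $\distSC{x}{x'} \leq \dist{\dot X}{x}{x'} + \epsilon < \infty$. A second appeal to Lemma \ref{some facts about distSC}, again using that $x \notin X$, shows that $x$ and $x'$ must both lie in a common cone, which can only be $C_i$, and that $\distSC{x}{x'} = \dist{C_i}{x}{x'}$. Hence $x' \in C_i$ and $\dist{C_i}{x}{x'} \leq \dist{\dot X}{x}{x'} + \epsilon$. Letting $\epsilon \to 0$ and combining with the trivial reverse inequality $\dist{\dot X}{x}{x'} \leq \distSC{x}{x'} = \dist{C_i}{x}{x'}$ (obtained from the single-edge chain $(x,x')$) yields the desired equality. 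The only delicate point is the bookkeeping of which of the spaces $X$, $C_i$, $C_j$ each $z_k$ inhabits; once one keeps in mind that the identifications in $\dot X$ take place only along the $\iota_j(Y_j)$, the rest is a one-line radial calculation.
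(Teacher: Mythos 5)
Your proof is correct and follows essentially the same route as the paper's: approximate $\dist{\dot X}{x}{x'}$ by a chain whose interior points lie in $X$ (Lemma \ref{chain in X}), rule out chains with $m\geq 3$ points because the first step out of the cone already costs at least $r_0-r$, and then read off the equality from the $m=2$ case. Your justification of the key inequality $\distSC{z_1}{z_2}\geq r_0-r$ via the $1$-Lipschitz radial coordinate is a slightly more explicit version of what the paper asserts directly, but the argument is the same.
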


\begin{proof}
	Let  $i \in I$ and $x=(y,r)$ be a point of $C_i\setminus \iota_i(Y_i)$. Since $x \notin \iota_i(Y_i)$, $r_0-r >0$. 
	Let $x'$ be a point of $\dot X$ such that  $\dist{\dot X} x {x'} \leq \frac 14 (r_0-r)$. 
	Let $\eta \in \left] 0 , \frac 14 (r_0-r) \right[$. 
	Using the previous lemma, there is a chain $C=(z_1,\dots, z_m)$ between $x$ and $x'$ such that $l(C) \leq \dist{\dot X} x {x'} + \eta$ and for all $j \in \intval 2 {m-1}$, $z_j \in X$. 
	Assume that $m \geq 3$. 
	Since $x \in C_i \setminus \iota_i(Y_i)$, we have
	\begin{displaymath}
		r_0 - r \leq \distSC {z_1}{z_2} \leq l(C) \leq \dist{\dot X} x {x'} + \eta \leq \frac 12 (r_0-r)
	\end{displaymath}
	Contradiction.
	Thus $m=2$ and $\distSC x {x'} = l(C) \leq \frac12 (r_0-r)$.  
	Consequently $x'$ belongs to $C_i$ and $\distSC x {x'} = \dist{C_i}x{x'}$.
	Hence for all $\eta \in \left] 0 , \frac 14 (r_0-r) \right[$, we have
	\begin{displaymath}
		\dist{\dot X} x {x'} \leq \dist{C_i} x {x'} \leq \dist{\dot X} x {x'} + \eta
	\end{displaymath}
	It follows that $\dist{\dot X} x {x'} = \dist{C_i} x {x'}$.
\end{proof}

\begin{lemm}
\label{estimation distance in the base}
	For all $x,x' \in X$, we have $\dist{\dot X} x{x'} \geq \mu(\dist X x{x'})$.
\end{lemm}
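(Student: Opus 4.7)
The plan is to reduce to chains whose intermediate vertices all lie in $X$ (using Lemma \ref{chain in X}), and then to bound the length of any such chain below by $\mu(\dist X x {x'})$ by combining the pointwise lower bound $\distSC \cdot \cdot \geq \mu(\dist X \cdot \cdot)$ with the subadditivity and monotonicity of $\mu$ from Proposition \ref{function mu}.

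More precisely, fix $\epsilon > 0$. I would invoke Lemma \ref{chain in X} to produce a chain $C = (z_1, \dots, z_m)$ from $x$ to $x'$ with $l(C) \leq \dist{\dot X}{x}{x'} + \epsilon$ and $z_j \in X$ for every $j$ (the endpoints are already in $X$ by hypothesis). The key observation is then that for any two points $z, z' \in X$, one has $\distSC z {z'} \geq \mu(\dist X z {z'})$: indeed, by Lemma \ref{some facts about distSC}, either $z, z'$ both lie in some $Y_i$, in which case $\distSC z {z'} = \mu(\dist X z {z'})$, or not, in which case $\distSC z {z'} = \dist X z {z'} \geq \mu(\dist X z {z'})$ since $\mu(t) \leq t$ by Proposition \ref{function mu}.

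Applying this pointwise estimate to each pair of consecutive vertices in the chain, summing, and then using subadditivity of $\mu$ and the triangle inequality in $X$, one gets
\begin{displaymath}
l(C) \;\geq\; \sum_{j=1}^{m-1} \mu\bigl(\dist X {z_j}{z_{j+1}}\bigr) \;\geq\; \mu\!\left(\sum_{j=1}^{m-1} \dist X {z_j}{z_{j+1}}\right) \;\geq\; \mu\bigl(\dist X x {x'}\bigr),
\end{displaymath}
where the last inequality uses that $\mu$ is non-decreasing. Thus $\dist{\dot X} x {x'} + \epsilon \geq \mu(\dist X x {x'})$, and letting $\epsilon \to 0$ concludes the proof.

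There is no serious obstacle here: the only subtlety is to make sure the chain can be chosen entirely within $X$, so that Lemma \ref{some facts about distSC} applies to every consecutive pair (otherwise some $\distSC{z_j}{z_{j+1}}$ could equal $+\infty$ and no comparison to $\mu$ would be available); this is precisely the content of Lemma \ref{chain in X}. Once that reduction is made, the estimate is a one-line consequence of the subadditivity and monotonicity of $\mu$.
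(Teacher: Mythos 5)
Your proof is correct and follows essentially the same route as the paper: reduce to a chain with all vertices in $X$ via Lemma \ref{chain in X}, apply the pointwise bound $\distSC{z}{z'} \geq \mu(\dist X z{z'})$ from Lemma \ref{some facts about distSC} to each consecutive pair, and conclude by subadditivity and monotonicity of $\mu$ together with the triangle inequality. No gaps.
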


\begin{proof}
	Let $\epsilon >0$. 
	Using Lemma \ref{chain in X}, there exists a chain $C=(z_1,\dots, z_m)$ between $x$ and $x'$ such that $l(C) \leq \dist{\dot X} x {x'} + \epsilon$ and for all $j \in \intval 1 m$, $z_j \in X$. The subadditivity of $\mu$ gives
	\begin{displaymath}
		\mu(\dist X x {x'}) \leq \sum_{j=1}^{m-1} \mu(\dist X {z_j}{z_{j+1}}) \leq \sum_{j=1}^{m-1} \distSC {z_j}{z_{j+1}} = l(C)
	\end{displaymath}
	Thus for all $\epsilon >0$, we have $\mu(\dist X x {x'}) \leq \dist{\dot X} x {x'} + \epsilon$. It follows that $\dist{\dot X} x{x'} \geq \mu(\dist X x{x'})$.
\end{proof}

\begin{prop}
	$|\ . \ |_{\dot X}$ defines a metric on $\dot X$.
\end{prop}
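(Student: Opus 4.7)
The plan is to check each metric axiom. Symmetry, $\dist{\dot X}{x}{x} = 0$, and the triangle inequality are immediate from the definition as an infimum over chains (concatenating chains gives the triangle inequality, reversing a chain gives symmetry, and the one-element chain $(x)$ gives $\dist{\dot X}{x}{x} = 0$). The only non-trivial property is positivity, so I would focus entirely on showing that $x \neq x'$ implies $\dist{\dot X}{x}{x'} > 0$.

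By construction of $\dot X$ as the quotient of $X \sqcup \bigsqcup_{i\in I} C_i$ obtained by identifying $\iota_i(y)$ with $y$ for $y \in Y_i$, every point of $\dot X$ either lies in (the image of) $X$ or lies in $C_i \setminus \iota_i(Y_i)$ for some $i \in I$. So I would split into two cases.

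\textbf{Case 1: $x$ lies strictly inside some cone,} i.e. $x = (y,r) \in C_i$ with $r < r_0$. Then $\frac{1}{4}(r_0 - r) > 0$, and Lemma \ref{distance coincide on the cone} asserts that any $x' \in \dot X$ with $\dist{\dot X}{x}{x'} \leq \frac{1}{4}(r_0 - r)$ must in fact lie in $C_i$ with $\dist{\dot X}{x}{x'} = \dist{C_i}{x}{x'}$. Since $|\cdot|_{C_i}$ is a bona fide metric (Proposition from Bridson--Haefliger recalled above), $\dist{\dot X}{x}{x'} = 0$ forces $x = x'$ in $C_i$, hence in $\dot X$. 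The same argument, by symmetry of $\dist{\dot X}{\cdot}{\cdot}$, also handles the subcase where instead $x'$ lies strictly inside a cone.

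\textbf{Case 2: both $x$ and $x'$ lie in $X$.} Then Lemma \ref{estimation distance in the base} gives $\dist{\dot X}{x}{x'} \geq \mu(\dist X x{x'})$, and the lower bound $\mu(t) \geq \frac{2 r_0}{\pi \sinh r_0}\min\{\pi \sinh r_0, t\}$ from Proposition \ref{function mu} shows that $\mu$ vanishes only at $0$. Hence $\dist{\dot X}{x}{x'} = 0$ forces $\dist X x{x'} = 0$, so $x = x'$ in $X$ and therefore in $\dot X$.

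I do not anticipate any real difficulty: the preceding lemmas have already isolated the two local obstructions to degeneracy (one coming from the metric on each cone via Lemma \ref{distance coincide on the cone}, one coming from the injection of $X$ via Lemma \ref{estimation distance in the base}). The only point that requires a moment's care is verifying that these two cases genuinely exhaust all pairs $(x,x')$, which is immediate from the pushout description of $\dot X$.
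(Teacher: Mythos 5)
Your proof is correct and follows essentially the same route as the paper: reduce to positivity, then split according to whether one of the points lies strictly inside a cone (invoking Lemma \ref{distance coincide on the cone} and the fact that $|\ .\ |_{C_i}$ is a genuine metric) or both lie in $X$ (invoking Lemma \ref{estimation distance in the base} and the lower bound on $\mu$). No issues.
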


\begin{proof}
The only point to prove is the positivity of $|\ . \ |_{\dot X}$. Consider  two points $x$ and $x'$ of $\dot X$ such that $\dist{\dot X} x{x'} = 0$. There are two cases.
\begin{enumerate}
\item If there is $i \in I$ such that $x \in C_i\setminus \iota_i(Y_i)$ or  $x' \in C_i\setminus \iota_i(Y_i)$, then, using Lemma \ref{distance coincide on the cone}, $x$ and $x'$ both belong to $C_i$. Moreover $\dist{C_i}x {x'} = \dist{\dot X} x {x'} = 0$. Thus $x=x'$.
\item If $x$ and $x'$ are both elements of $X$, then thanks to Lemma  \ref{estimation distance in the base} we have $\mu(\dist X x {x'}) \leq \dist{\dot X} x {x'} = 0$. Hence $\dist X x {x'} = 0$. It follows that $x=x'$.
\end{enumerate}
\end{proof}

\subsubsection*{Projection of the cone-off on its base}

We consider a map $p$ from $\dot X \setminus \{v_i, i\in I\}$ onto $X$ whose restriction to a cone $C_i\setminus\{v_i\}$ is $p_i$  and whose restriction to $X$ is the identity.

\begin{prop}
\label{comparison distance base cone-off}
	Consider a point $x$ of $\dot X$ such that the distance between $x$ and any vertex of $\dot X$ is at least $\frac{r_0}2$. 
	For all $x' \in B\left(x,\frac{r_0}3 \right)$, we have $\dist X {p(x)}{p(x')} \leq \frac {3 \pi \sinh r_0} {r_0} \dist {\dot X} x {x'}$.
\end{prop}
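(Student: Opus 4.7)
The natural strategy is to reduce the statement to the single-cone case, Proposition~\ref{projection distance cone}, by using the chain characterization of the cone-off metric. Let $x$ satisfy the hypothesis and let $x'\in B(x,r_0/3)$. For any $\epsilon>0$ small enough that $\dist{\dot X}{x}{x'}+\epsilon<r_0/3$, Lemma~\ref{chain in X} provides a chain $C=(z_1,\dots,z_m)$ with $z_1=x$, $z_m=x'$, all intermediate points in $X$, and $l(C)\le \dist{\dot X}{x}{x'}+\epsilon$. The idea is to estimate $\distX{p(z_j)}{p(z_{j+1})}$ separately for each pair and then apply the triangle inequality in $X$.

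The estimates come from a case analysis based on Lemma~\ref{some facts about distSC}, and this is also where the main work lies. First, if a consecutive pair $z_j,z_{j+1}$ lies in $X$ with no common $Y_i$, then $\distSC{z_j}{z_{j+1}}=\distX{z_j}{z_{j+1}}$ and the estimate is trivial. Second, if both lie in some $Y_i$, then $\distSC{z_j}{z_{j+1}}=\mu\!\left(\distX{z_j}{z_{j+1}}\right)$. Since this quantity is at most $l(C)<r_0/3<2r_0$, Proposition~\ref{function mu} forces $\distX{z_j}{z_{j+1}}<\pi\sinh r_0$, hence
\[
\mu\!\left(\distX{z_j}{z_{j+1}}\right)\ \ge\ \tfrac{2r_0}{\pi\sinh r_0}\,\distX{z_j}{z_{j+1}},
\]
which yields $\distX{z_j}{z_{j+1}}\le \tfrac{\pi\sinh r_0}{2r_0}\distSC{z_j}{z_{j+1}}\le \tfrac{3\pi\sinh r_0}{r_0}\distSC{z_j}{z_{j+1}}$.

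The only remaining case concerns the endpoint segments. If, say, $z_1=x\in C_i\setminus\iota_i(Y_i)$, then $\distSC{z_1}{z_2}$ is finite only when $z_2\in\iota_i(Y_i)$, in which case it equals $\dist{C_i}{z_1}{z_2}\le l(C)<r_0/3$. The hypothesis on $x$ guarantees that $z_1$ has distance $\ge r_0/2$ from the vertex $v_i$, so Proposition~\ref{projection distance cone} applies inside $C_i$ and gives
\[
\distX{p(z_1)}{p(z_2)}\ =\ \distX{p_i(z_1)}{z_2}\ \le\ \tfrac{3\pi\sinh r_0}{r_0}\distSC{z_1}{z_2}.
\]
The symmetric argument handles $z_{m-1},z_m$ when $x'\in C_j\setminus\iota_j(Y_j)$.

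Summing all segment estimates via the triangle inequality in $X$, and noting that $p(z_j)=z_j$ whenever $z_j\in X$, one obtains
\[
\distX{p(x)}{p(x')}\ \le\ \tfrac{3\pi\sinh r_0}{r_0}\,l(C)\ \le\ \tfrac{3\pi\sinh r_0}{r_0}\bigl(\dist{\dot X}{x}{x'}+\epsilon\bigr),
\]
and letting $\epsilon\to 0$ concludes the proof. The main obstacle is simply the careful case analysis of segment types, together with checking that the chain-length bound $l(C)<r_0/3$ is exactly what is needed to make Proposition~\ref{projection distance cone} applicable at the endpoints and to keep $\mu$ in its bi-Lipschitz regime in the middle.
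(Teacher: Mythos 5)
Your proof is correct and follows essentially the same route as the paper: reduce to a chain with interior points in $X$ via Lemma~\ref{chain in X}, bound the interior segments using the lower bound $\distSC{z_j}{z_{j+1}}\geq\mu\left(\dist X{z_j}{z_{j+1}}\right)$ together with the properties of $\mu$, handle the endpoint segments inside a cone via Proposition~\ref{projection distance cone}, and sum with the triangle inequality before letting $\epsilon\to 0$. The only cosmetic difference is that you split the interior segments into two cases where the paper applies the single inequality $\distSC{z_j}{z_{j+1}}\geq\mu\left(\dist X{z_j}{z_{j+1}}\right)$ uniformly; both yield the same constant.
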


\begin{proof}
Let $\epsilon \in ]0 ,\frac{r_0}2[$. 
Consider a point $x'$ of $B\left(x,\frac{r_0}3 \right)$. 
Using Lemma \ref{chain in X}, there exists a chain $C = (z_1, \dots, z_m)$ such that for all $j$ that belongs to $\intval 2 {m-1}$, $z_j \in X$ and $l(C) \leq \dist {\dot X} x {x'} + \epsilon \leq r_0$.
We chose $j \in \{2, \dots , m-1\}$.
Lemma \ref{function mu} gives 
\begin{eqnarray*}
	r_0 \geq l(C) \geq \distSC {z_j}{z_{j+1}} & \geq & \mu (\dist X {z_j}{z_{j+1}} ) \\
	&\geq & \frac {2 r_0}{\pi \sinh r_0} \min \left\{\pi \sinh r_0, \dist X {z_j}{z_{j+1}} \right\}\\
\end{eqnarray*}
Thus $\dist X  {p(z_j)}{p(z_{j+1})} = \dist X {z_j}{z_{j+1}} \leq \frac {3\pi \sinh r_0} {r_0} \distSC {z_j}{z_{j+1}}$

If $x=z_1$ is a point of $X$ the same proof gives $\dist X  {p(z_1)}{p(z_2)} \leq \frac {3\pi \sinh r_0} {r_0} \distSC {z_1}{z_2}$. 
On the other hand, if $x$ belongs to a cone $C_i$, then Lemma \ref{projection distance cone} gives the same inequality.  
In the same way, we obtain 
\begin{displaymath}
	\dist X  {p(z_{m-1})}{p(z_m)} \leq \frac {3 \pi \sinh r_0} {r_0} \distSC {z_{m-1}}{z_m}
\end{displaymath}
Consequently, we have
\begin{eqnarray*}
	\dist X {p(x)}{p(x')} \leq \sum_{j=1}^{m-1} \dist X  {p(z_j)}{p(z_{j+1})} & \leq & \frac {3 \pi \sinh r_0} {r_0} \sum_{j=1}^{m-1}  \distSC {z_j}{z_{j+1}}\\
	&= & \frac {3 \pi \sinh r_0} {r_0}  l(C)\\
	&\leq &\frac {3 \pi \sinh r_0} {r_0} \left(\dist {\dot X} x {x'} + \epsilon \right)\\
\end{eqnarray*}
Hence $\dist X {p(x)}{p(x')} \leq \frac {3 \pi \sinh r_0} {r_0} \dist {\dot X} x {x'}$.
\end{proof}

\subsection{Uniform approximation of the distance on the cone-off}

In order to study the ultra limit of a sequence of cone-off spaces, we need to approximate the distance between two points of $\dot X$ by a chain such that the number of points involved in the chain only depends on the error and not on the base space $X$.
This point was already noted by M. Gromov in \cite{Gro01}.
More precisely, in this section we prove the following result:

\begin{prop}
\label{uniform approximation distance cone off}
	Let $ A\geq 1$. 
	There exists a constant $M$, depending only on $A$ and not on $r_0$, with the following property. 
	Let $\epsilon \in ]0, 1[$,  $X$  be a metric space and $Y=(Y_i)_{i \in I}$ a collection of subsets of  $X$. 
	Let $x$ and $x'$ be two points of the cone-off $\dot X(Y)$ such that $\dist{\dot X} x {x'} \leq A$.
	There exists a chain $C$ between $x$ and $x'$ with less than $\frac M{\sqrt \epsilon}$ points and such that $l(C) \leq \dist{\dot X} x {x'} + \epsilon$.
\end{prop}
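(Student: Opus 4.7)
My approach is to take a near-optimal chain and reduce it to one with $O(A^{3/2}/\sqrt{\epsilon})$ points by a combination of merging and ``un-coning'' steps.

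I start with a chain $C_0 = (z_1, \ldots, z_{m_0})$ between $x$ and $x'$ satisfying $l(C_0) \leq \dist{\dot X}{x}{x'} + \epsilon/4 \leq 2A$; by Lemma \ref{chain in X} its intermediate points lie in $X$, so every segment is of one of two types: either an \emph{$X$-segment} whose cost is $\dist X{z_j}{z_{j+1}}$, or a \emph{cone segment} through some $Y_i$ whose cost is $\mu(\dist X{z_j}{z_{j+1}})$. First, I simplify exhaustively using two mergers that do not increase $l(C)$. By the subadditivity of $\mu$ (Proposition \ref{function mu}), three consecutive points lying in a common $Y_i$ and linked by cone segments through $Y_i$ reduce to two; by the triangle inequality in $X$, two consecutive $X$-segments merge to one. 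After these, consecutive segments have distinct types.

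Next, I fix a threshold $\ell_0 \asymp \sqrt{\epsilon/A}$ and ``un-cone'' every cone segment whose cost is at most $\ell_0$, replacing it by the direct $X$-segment between its endpoints. The extra cost per replacement is $t - \mu(t)$ with $t = \dist X{z_j}{z_{j+1}}$. The key estimate is $t - \mu(t) \leq C\,\mu(t)^3$ with $C$ uniform in $r_0$. Granting this, and summing over the replaced segments using $\mu(t_k) \leq \ell_0$ together with $\sum_k \mu(t_k) \leq l(C_0) \leq 2A$, one has
\begin{displaymath}
\sum_{\text{short}} (t_k - \mu(t_k)) \leq C\, \ell_0^2 \sum_k \mu(t_k) \leq 2 C\, \ell_0^2\, A,
\end{displaymath}
which is at most $\epsilon/2$ once the constant hidden in the choice of $\ell_0$ is fixed appropriately.

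After un-coning, I re-apply the merger of consecutive $X$-segments. Each surviving cone segment in the final chain has cost strictly greater than $\ell_0$, so their number is bounded by $l(C_0)/\ell_0 + O(1) = O(A/\ell_0) = O(A^{3/2}/\sqrt{\epsilon})$. Since segment types alternate, the total number of chain points is at most $M/\sqrt{\epsilon}$ for an $M$ depending only on $A$ (of order $A^{3/2}$), and the total chain length is bounded by $l(C_0) + \epsilon/2 \leq \dist{\dot X}{x}{x'} + \epsilon$.

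The main technical obstacle is establishing the cubic estimate $t - \mu(t) \leq C\,\mu(t)^3$ with $C$ independent of $r_0$. From the defining formula for $\mu$ one obtains, in the non-saturated regime, $t = 2\sinh(r_0)\cdot \arcsin\bigl(\mu(t)/(2\sinh r_0)\bigr)$, whence $t - \mu(t) = \mu(t) \cdot [\arcsin(u)/u - 1]$ with $u = \mu(t)/(2\sinh r_0)$, which only gives $t - \mu(t) = O(\mu(t)^3/\sinh^2 r_0)$ and degenerates as $r_0 \to 0$. The resolution requires a case split: for $r_0$ bounded below the estimate is uniform; for small $r_0$ one instead exploits the inequality $\mu(t) \leq 2r_0$ to bound cone-segment costs directly and to obtain an a priori cap on the number of cone segments that can appear in a chain of length $\leq 2A$, thereby recovering the uniform $r_0$-independence of $M$. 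Carefully reconciling the two regimes is the most delicate step.
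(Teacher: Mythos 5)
Your combinatorial scheme is sound but genuinely different from the paper's. The paper does not merge and un-cone: it extracts a subchain greedily, jumping at each step to the farthest subsequent point within $X$-distance $\eta$ of the current one, bounds the loss in length by $m\eta^3$ using $\mu(t)\geq t-t^3$, and bounds $m$ by noting that two consecutive jumps cannot both have $X$-length below $\eta/2$; with $\eta\asymp\sqrt{\epsilon/A}$ this yields the same $M\asymp A^{3/2}$ as your threshold argument. Two repairs to your version. First, exhaustive merging does not make types alternate: two cone segments through \emph{distinct} $Y_i$'s cannot be merged, since subadditivity of $\mu$ dominates $\distSC{z_{j-1}}{z_{j+1}}$ only when all three points share a common $Y_i$ (otherwise that quantity is the full $X$-distance, possibly much larger than $\mu(t_1)+\mu(t_2)$). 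What survives is only that no two $X$-segments are adjacent; that is all your count actually needs, so this is cosmetic. Second, ``replacing a cone segment by the direct $X$-segment'' is not an operation on chains --- by Lemma \ref{some facts about distSC} the cost between two points of a common $Y_i$ is forced to equal $\mu$ of their $X$-distance --- but as bookkeeping for an upper bound on the length of the extracted subsequence it is legitimate.

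The step you defer, however, is not delicate: it is impossible, because the statement fails as $r_0\to0$. Take $X=\R$ and $Y_i=[ih,(i+1)h]$ with $h=\pi\sinh r_0$; then $\dist{\dot X}{0}{L}=\frac{2r_0}{\pi\sinh r_0}L$ (attained by the chain through all multiples of $h$), while any chain with $m$ points has length at least $L-(m-1)h$, so approximating the distance within $\epsilon$ forces $m\geq 1+\big((1-\frac{2}{\pi})L-\epsilon\big)/(\pi\sinh r_0)\to\infty$. Your proposed fix is also backwards: $\mu(t)\leq 2r_0$ is an \emph{upper} bound on the cost of a cone segment, so it caps nothing; bounding the number of such segments would require a lower bound on each cost. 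The honest resolution is that ``not on $r_0$'' must be read with $r_0$ bounded away from $0$ (as it always is in the paper, where $r_0>10^6\ln3$): the paper's own key inequality $\mu(t)\geq t-t^3$ already fails for small $r_0$, since at $t=\pi\sinh r_0$ the deficit $t-\mu(t)=\pi\sinh r_0-2r_0$ is of order $t$, not $t^3$. Once $r_0\geq1$ one has $t-\mu(t)\leq\frac{t^3}{24\sinh^2 r_0}+\frac{t^3}{24}\leq t^3$, the segments you un-cone then satisfy $\mu(t)\geq t/2$ and hence $t-\mu(t)\leq t^3\leq 8\mu(t)^3$, and the rest of your argument closes.
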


\begin{proof}
	Let $\epsilon \in ]0,1[$. 
	Let  $x$ and $x'$ be two points of  $\dot X$ such that $\dist{\dot X} x {x'} \leq A$. 
	Using Lemma \ref{chain in X}, there is a chain $C = (z_1, \dots, z_n)$ between $x$ and $x'$ such that $l(C) \leq \dist{\dot X} x {x'} + \frac 12 \epsilon$ and for all $j \in \intval 2 {n-1}$, $z_j \in X$.
	We fix $\eta \in ]0,1[$, and construct a subchain of $C$ between $x$ and $x'$, denoted by $C_\eta = (z_{j_1}, \dots, z_{j_m})$, as follows.
	\begin{enumerate}
		\item Put $j_1 = 1$ and $j_2 = 2$.
		\item Let $k \geq 2$. We construct $j_{k+1}$ from $j_k$.
		\begin{itemize}
			\item If $j_k < n-1$ and $\dist X {z_{j_k}}{z_{j_k +1}} > \eta$, then $j_{k+1} = j_k +1$.
			\item If $j_k < n-1$ and $\dist X {z_{j_k}}{z_{j_k +1}} \leq \eta$, then $j_{k+1}$ is the largest  $j \in \{j_k+1, \dots, n-1\}$ such that $\dist X {z_{j_k}}{z_j} \leq \eta$
			\item If $j_k = n-1$, then $j_{k+1}=n$ and the process stops.
		\end{itemize}
	\end{enumerate}
	
	This construction removes from $C$ the small parts of the chain which may be contained in a cone. We prove now that it hardly changes the length of the chain.
	
\begin{lemm}[Comparison between the two chains]
	\label{comparison between chains}
	The chains $C_\eta$ and $C$ satisfy the following inequality:
	\begin{math}
		l(C_\eta) \leq l(C) + m \eta^3
	\end{math}, where $m$ is the number of points in $C_\eta$.
\end{lemm}

\begin{proof}
	We consider an integer $k \in  \{ 1 , \dots , m-2\}$.  There are two cases
\paragraph{First case: }
	Assume that $\dist X {z_{j_k}}{z_{j_{k+1}}} \leq \eta$. 
	The function $\mu$ given by Proposition \ref{function mu} has the following property:
	\begin{math}
		\forall t \in [0, \pi \sinh r_0], \ \mu(t) \geq t - t^3
	\end{math}. 
	Thus using the subadditivity of $\mu$ we obtain
	\begin{eqnarray*}
		\sum_{j = j_k}^{j_{k+1} - 1} \distSC{z_j}{z_{j+1}} & \geq & \sum_{j = j_k}^{j_{k+1} - 1} \mu(\dist X {z_j}{z_{j+1}}) \\
		& \geq & \mu (\dist X {z_{j_k}}{z_{j_{k+1}}}) \\
		& \geq & \dist X {z_{j_k}}{z_{j_{k+1}}} - \dist X {z_{j_k}}{z_{j_{k+1}}}^3\\
	\end{eqnarray*}
	Thus we have 
	\begin{math}
		\sum_{j = j_k}^{j_{k+1} - 1} \distSC{z_j}{z_{j+1}}  \geq \distSC {z_{j_k}}{z_{j_{k+1}}} - \eta^3
	\end{math}
\paragraph{Second case:} Assume that $\dist X {z_{j_k}}{z_{j_{k+1}}} > \eta$. By construction, we have $j_{k+1} = j_k +1$. Hence the last inequality remains true.

\paragraph*{}After summing over $k$ these inequalities, we finally obtain $l(C) \geq l(C_\eta) - m\eta^3$.
\end{proof}

\begin{lemm}[Estimation of $m$]
\label{estimation m}
	If $\eta \leq \frac 14$, then $m$, the number of points in the chain $C_\eta$, is less than $100 \frac A \eta$.
\end{lemm}

\begin{proof}
	Let $k \in \{2, \dots, m-3\}$.
	The two inequalities $\dist X {z_{j_k}}{z_{j_{k+1}}} \leq \frac 12 \eta$ and $\dist X {z_{j_{k+1}}}{z_{j_{k+2}}} \leq \frac 12 \eta$ cannot be both true. Indeed, if it was the case, $j_{k+1}$ will not be the largest  $j \in \{j_k+1, \dots, n-1\}$ such that $\dist X {z_{j_k}}{z_j} \leq \eta$.
	 Assume that $\dist X {z_{j_k}}{z_{j_{k+1}}} > \frac 12 \eta$ 
	 (the other case is symmetric).
	 Using the same estimation of $\mu$ as the one in the previous lemma, we obtain
	 \begin{displaymath}
			\distSC {z_{j_k}}{z_{j_{k+1}}} 
			\geq \mu(\dist X {z_{j_k}}{z_{j_{k+1}}}) 
			\geq \mu\left( \frac 12 \eta \right)
			\geq \frac 12 \eta - \frac 18 \eta^3
	 \end{displaymath}
	 Thus $\distSC {z_{j_k}}{z_{j_{k+1}}} + \distSC {z_{j_{k+1}}}{z_{j_{k+2}}}  \geq  \frac 12 \eta - \frac 18 \eta^3$.
	 After summing over $k$, the previous lemma gives
	 \begin{displaymath}
	 	\left\lfloor \frac {m-4}2 \right\rfloor \left(  \frac 12 \eta - \frac 18 \eta^3 \right)
	 	\leq l(C_\eta)
	 	\leq l(C) + m \eta^3
	 	\leq \dist{\dot X} x {x'} + \frac 12 \epsilon + m\eta^3
	 \end{displaymath}
	 We have the following inequality
	 \begin{displaymath}
			m\left(4 -17 \eta^2\right) \leq 50 \frac A \eta 
	 \end{displaymath}
	 Hence if $\eta \leq \frac 14$, then $4-17 \eta^2 \geq \frac 12$. 
	 It follows that $m$ must be bounded by $100 \frac A \eta$.
\end{proof}

\paragraph{End of the proof of Proposition \ref{uniform approximation distance cone off}}
Combining the two previous lemmas, we obtain 
\begin{displaymath}
	l(C_\eta) \leq l(C) +m \eta^3 \leq \dist{\dot X} x {x'} + \frac 12 \epsilon + 100A \eta ^2.
\end{displaymath}
If we chose $\eta = \frac 1 {10}\sqrt{\frac \epsilon{2 A}}$, then we have $l(C_\eta) \leq \dist{\dot X} x {x'} + \epsilon$. Moreover the number $m$ of points of $C_\eta$ is less than $1000A \sqrt{\frac{2A}{\epsilon}}$.
\end{proof}

\subsection{Contracting balls of the cone-off}
\label{section contracting balls of the cone-off}
In this section $X$ is a proper, geodesic, $\delta$-hyperbolic space.
We consider a collection $Y=(Y_i)_{i \in I}$ of closed strongly quasi-convex subsets of $X$.
We assume that $X$ satisfies the condition H(l), i.e.

\paragraph{Condition H(l) :}For all $x \in X$ and $r \in \R^+$, there exists a homotopy $h : \bar B (x, r) \times [0,1] \rightarrow X$ which contracts the closed ball $\bar B(x,r)$ to $\{x\}$ such that for all $x' \in \bar B(x,r)$, for all $t \in [0,1]$, $\distX x{h(x',t)} \leq \distX x{x'} +l$.

\paragraph{}We also assume that the $Y_i$'s satisfy the same condition.

\begin{prop}
\label{contracting ball in the cone-off}
	Let $x$ be a point of $\dot X$ and $r \in \R^+$.
	We assume that for all $i \in I$, $\distX x{v_i} > r$.
	Then the closed ball $\bar B(x,r)$ is contractible in $B(x, r+3l)$.
\end{prop}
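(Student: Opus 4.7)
The argument mimics the two-stage construction of Proposition \ref{contracting ball to the base in the cone}: first retract the portions of $\bar B(x,r)$ lying inside each cone onto the base $X$, then contract the resulting subset of $X$ using condition H(l). The $3l$ in the bound decomposes as $2l$ (cone retraction) $+\,l$ (contraction in $X$). We distinguish two cases according to whether $x$ lies in $X$ or strictly inside some cone.

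\textbf{Case 1: $x \in X$.} For each index $i \in I$ such that $C_i \cap \bar B(x,r)$ is not contained in $Y_i$, the hypothesis $\distX x{v_i} > r$ ensures that $v_i \notin \bar B(x,r)$. We then adapt the construction of Proposition \ref{contracting ball to the base in the cone}: use condition H(l) on $Y_i$ to horizontally contract the angular coordinate of points of $\bar B(x,r) \cap C_i$, then apply the radial projection onto $Y_i$. Each such cone-wise homotopy maps $\bar B(x,r) \cap C_i$ into $\iota_i(Y_i) \subseteq X$ while remaining within $\bar B(x,r+2l)$ and fixing the points already in $\iota_i(Y_i)$. Gluing these with the identity on $\bar B(x,r) \cap X$ produces a continuous homotopy
\begin{displaymath}
H^{(1)} : \bar B(x,r) \times [0,1/2] \rightarrow \bar B(x,r+2l)
\end{displaymath}
whose image at time $1/2$ lies in $X$ and is contained in a closed $X$-ball $\bar B_X(x,R)$ with $R \leq r+2l$. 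Applying condition H(l) on $X$ to this $X$-ball yields a homotopy $H^{(2)}$ on $[1/2,1]$ contracting it to $\{x\}$ inside $\bar B_X(x,R+l) \subseteq \bar B(x,r+3l)$. Concatenating $H^{(1)}$ and $H^{(2)}$ gives the required contraction.

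\textbf{Case 2: $x \in C_i \setminus \iota_i(Y_i)$ for some $i$.} Write $x=(y,\rho)$; since $\distX x{v_i}=\rho > r$, we have $\rho > r$. If $r \leq \frac{1}{4}(r_0-\rho)$, then by Lemma \ref{distance coincide on the cone}, $\bar B(x,r)$ coincides with the cone ball $\bar B_{C_i}(x,r)$, and Proposition \ref{contracting ball contained in a cone} provides the desired contraction within $\bar B(x,r+l) \subseteq \bar B(x,r+3l)$. Otherwise, Proposition \ref{contracting ball to the base in the cone} applies inside $C_i$ (the range $r \in \left]r_0-\rho,\rho\right[$ being guaranteed by the hypotheses) to deformation-retract the portion of $\bar B(x,r)$ lying in $C_i$ onto a subset of $\iota_i(Y_i)$, within $\bar B(x,r+2l)$ and fixing $\iota_i(Y_i)$; we then proceed as in Case 1 for the remaining cones and the final contraction in $X$.

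\textbf{Main obstacle.} The delicate point is verifying at the end of the first stage that the image $H^{(1)}(\bar B(x,r) \times \{1/2\})$ sits inside an \emph{$X$-ball} of radius at most $r+2l$, not merely inside a $\dot X$-ball of this radius, since condition H(l) on $X$ is stated in terms of balls of the metric of $X$ (whereas $X$-distances can be substantially larger than $\dot X$-distances, because shortcuts through cones shorten $\dot X$-distances). This forces the cone-wise retraction to collapse the $Y_i$-projection of each cone-piece to a set of small $X$-diameter; the required control exploits the explicit distance formula (\ref{distance cone}) together with the subadditivity of $\mu$ from Proposition \ref{function mu}. A parallel continuity check is needed at the interface between the cone-pieces and $X$.
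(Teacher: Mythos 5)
Your proposal follows essentially the same route as the paper: the paper's proof likewise retracts each piece $\bar B(x,r)\cap C(Y_i)$ onto $Y_i$ via Proposition \ref{contracting ball to the base in the cone} (staying in $B(x,r+2l)$ and fixing the points of $Y_i$), glues these homotopies with the identity on $\bar B(x,r)\cap X$, and then contracts the resulting subset of $B(x,r+2l)\cap X$ inside $B(x,r+3l)$ using condition H(l), the only case distinction being whether $\bar B(x,r)$ is entirely contained in a single cone (in which case Proposition \ref{contracting ball contained in a cone} applies directly). The ``main obstacle'' you flag --- that condition H(l) is phrased for balls of the metric of $X$ rather than of $\dot X$ --- is a genuine subtlety, but the paper's own proof passes over it in a one-line final step, so your treatment is no less complete than the original.
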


\begin{proof}
	If there exists $i \in I$, such that $\bar B(x,r)$ is contained in a cone $C(Y_i)$, then we apply Proposition \ref{contracting ball contained in a cone}.
	Otherwise we proceed as follows.
	Let $i \in I$.
	Assume that $\bar B(x,r) \cap C(Y_i)  \neq \emptyset$.
	By Proposition \ref{contracting ball to the base in the cone}, there exists a homotopy $H_i : \bar B(x,r) \cap C(Y_i) \times [0,1] \rightarrow B(x, r+2l)$ satisfying the following properties.
	\begin{enumerate}
		\item $H_i$ contracts $\bar B(x,r) \cap C(Y_i)$ to a subset of $Y_i$.
		\item For all $x' \in \bar B(x,r)  \cap Y_i$, for all $t \in [0,1]$ $H_i(x',t)=x'$.
	\end{enumerate}
	Thus we may define a map $H : \bar B(x,r) \times [0,1] \rightarrow B(x,r+2l)$ such that its restriction to $\bar B(x,r) \cap C(Y_i) \times [0,1]$ is $H_i$, and for all $x' \in \bar B(x,r) \cap X$, for all $t \in [0,1]$, $H(x',t)=x'$.
	It follows that $H$ contracts $\bar B(x,r)$ to a subset of $B(x,r+2l)\cap X$.
	By condition H(l), this set is contractible in $B(x,r+3l)$.
\end{proof}

\subsection{Hyperbolicity of the cone-off  over an $\R$-tree}

\begin{prop}
\label{hyperbolicity cone-off arbitrary tree}
	Let $X$ be an $\R$-tree and $Y=(Y_i)_{i\in I}$  a collection of subtrees of $X$ such that two distinct elements of $Y$ share no more than one point.
	The cone-off $\dot X(Y)$ is $\ln 3$-hyperbolic.
\end{prop}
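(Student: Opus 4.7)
The plan is to verify the $4$-point $\ln 3$-hyperbolicity condition for $\dot X$ directly, exploiting its tree-like decomposition.

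First I would establish two preliminary facts. Since each $Y_i$ is a subtree of the $\R$-tree $X$, every ball of radius $\pi \sinh r_0$ inside $Y_i$ is itself an $\R$-tree. Lemma \ref{cone over R tree} then implies that each cone $C_i = C(Y_i)$ is CAT($-1$), hence $\ln 3$-hyperbolic. Secondly, using the fact that $X$ is a tree and that distinct $Y_i$, $Y_j$ share at most one point, I would show that each $C_i$ embeds isometrically into $\dot X$: any chain leaving $C_i$ and coming back must traverse $\iota_i(Y_i)$ twice, and the tree structure lets us project such a detour back onto $\iota_i(Y_i)$ without increasing length.

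Given four points $a, b, c, d \in \dot X$, I would apply Lemma \ref{chain in X} to approximate their pairwise $\dot X$-distances by chains whose intermediate points lie in $X$. Only finitely many cones $C_{i_1}, \dots, C_{i_k}$ are involved in these chains, which reduces the problem to a finite combinatorial situation encoded by a finite bipartite tree whose vertices are those cones and the connecting pieces of $X$. Each distance then decomposes as a sum of $d_X$-contributions (for pieces outside the $Y_i$'s) and cone shortcuts $\mu(d_X)$ (for pieces inside some $Y_i$), where $\mu$ is the function from Proposition \ref{function mu}.

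Using the $4$-point identity in the tree $X$, the concavity and subadditivity of $\mu$, and the $\ln 3$-hyperbolicity of each individual cone, I would verify
\[
\dist{\dot X}{a}{c} + \dist{\dot X}{b}{d} \leq \max\Big\{ \dist{\dot X}{a}{b} + \dist{\dot X}{c}{d},\ \dist{\dot X}{a}{d} + \dist{\dot X}{b}{c} \Big\} + 2 \ln 3.
\]
The main obstacle is the case analysis for how the four points distribute among $X$ and the cones; the trickiest situation is when pairs of points lie in distinct cones whose bases meet at a single shared point, since the $\mu$-modification of the tree metric need not be a tree metric itself. There one must invoke the $\ln 3$-hyperbolicity of each cone traversed and carefully combine cone and tree contributions, using the tree-like arrangement of the $Y_i$'s (pairwise intersections being single points) to ensure that the cone defects do not accumulate beyond $\ln 3$ in total.
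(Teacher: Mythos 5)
Your reduction to a finite configuration and your identification of the relevant structural fact (distinct $Y_i$'s meet in at most one point, so the cones and the leftover pieces of $X$ are arranged in a tree-like pattern glued along single points) both parallel the paper, which first treats the case of a finite simplicial tree with a finite family of subtrees and then passes to the general case by approximating the six pairwise distances between $x,y,z,t$ with finite chains. The gap is in your last step. The entire content of the proposition is the claim that the ``cone defects do not accumulate beyond $\ln 3$ in total,'' and your proposal offers no mechanism for this: it restates the goal rather than proving it. The constant $\ln 3$ leaves no slack at all --- each individual cone is already exactly $\ln 3$-hyperbolic --- so any argument that sums a defect contribution per cone traversed, or per application of the four-point inequality, produces $k\ln 3$ for $k$ cones and fails. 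Likewise, concavity and subadditivity of $\mu$ control lengths of chains but say nothing by themselves about how the four-point defects of the separate pieces interact.

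What fills the gap is to stop working with the four-point inequality piece by piece and instead observe that, in the finite case, $\dot X$ is obtained by gluing finitely many CAT($-1$) spaces (the cones $C(Y_i)$, which are CAT($-1$) by Lemma \ref{cone over R tree}, and the closures of the components of $X$ not covered by any $Y_i$, which are $\R$-trees) along subsets that are single points. The Bridson--Haefliger gluing theorem then gives that the whole space is CAT($-1$), hence $\ln 3$-hyperbolic globally, with no accumulation to control; this is exactly Lemma \ref{hyperbolicity cone-off finite tree} in the paper. (An elementary substitute, if you want to avoid CAT($-1$): a direct computation with Gromov products shows that the wedge of two $\delta$-hyperbolic spaces at a single point is again $\delta$-hyperbolic with the \emph{same} constant, e.g.\ for $x,y$ in one piece and $z,t$ in the other one gets $\gro xzt=\gro pzt$ and $\gro xyt=\dist{}{t}{p}+\gro xyp$, after which the inequality is immediate; iterating over the finite tree of pieces gives the result. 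But this induction over the gluing pattern is precisely the missing idea, and it must be stated and carried out --- it is not a consequence of the hyperbolicity of the individual cones plus properties of $\mu$.) Your preliminary claim that each $C_i$ embeds isometrically in $\dot X$ is correct and provable via the subadditivity of $\mu$, but it is not needed once the gluing argument is in place.
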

\rem In fact $\dot X(Y)$ is a CAT(-1) space, but we shall not use this point.

\paragraph{}This result is a consequence of the more particular case where $X$ is a finite simplicial tree.
\begin{lemm}
\label{hyperbolicity cone-off finite tree}
	Consider a \emph{finite simplicial} tree $X$ and a \emph{finite} collection $Y=(Y_i)_{i\in I}$ of subtrees of $X$ such that  two distinct elements of $Y$ share no more than one point. 
	
	The cone-off $\dot X(Y)$ is CAT(-1). In particular it is $\ln 3$-hyperbolic. 
\end{lemm}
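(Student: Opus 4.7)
I propose to argue by induction on $|I|$. When $|I|=0$ the cone-off is just the finite simplicial tree $X$, an $\R$-tree, which is $0$-hyperbolic geodesic and hence CAT$(-1)$. For the inductive step, write $Y'=(Y_1,\dots,Y_{n-1})$ and set $\dot X_{n-1}=\dot X(Y')$; by hypothesis this is CAT$(-1)$. View $\dot X_n = \dot X(Y)$ as obtained from $\dot X_{n-1}$ by gluing the cone $C(Y_n)$ along the identification $Y_n=\iota_n(Y_n)$. The cone $C(Y_n)$ is itself CAT$(-1)$ by Berestovskii's theorem, exactly as in the proof of Lemma~\ref{cone over R tree}: any geodesic triangle in the subtree $Y_n$ is a tripod, so the rescaled space $Y_n/\sinh r_0$ is CAT$(1)$.

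A first observation is that the subtree $Y_n$, viewed inside $\dot X_{n-1}$, still carries its original tree metric. For $y,y'\in Y_n$, any chain passing through a previously attached cone $C(Y_i)$ has to enter and exit $Y_i$ at points of $Y_i$; but $Y_n\cap Y_i$ contains at most one point by hypothesis, so any such excursion is a strict detour from the tree geodesic. After gluing $C(Y_n)$, however, the identified image of $Y_n$ in $\dot X_n$ acquires the shorter metric $\mu\circ\distV X$ inherited from the cone. In particular the identification $Y_n\leftrightarrow\iota_n(Y_n)$ is \emph{not} an isometry of tree metrics, and moreover $\iota_n(Y_n)$ fails to be convex in $C(Y_n)$ (geodesics between two rim points cut through the cone's interior). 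Consequently Reshetnyak's standard gluing theorem cannot be applied verbatim.

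To finish the argument I would invoke the Cartan--Hadamard criterion for CAT$(-1)$: the space $\dot X_n$ is simply connected (since $X$ is a tree and every attached cone is contractible, the union is contractible by a standard Mayer--Vietoris argument), so it suffices to check that $\dot X_n$ is locally CAT$(-1)$. This is immediate at points away from $Y_n$, where a neighbourhood lies entirely in $\dot X_{n-1}$ or in $C(Y_n)$. The main obstacle is at a point $y\in Y_n$: one must show that the link at $y$, obtained by gluing the tree-like link of $y$ in $\dot X_{n-1}$ with the half-circle link of $\iota_n(y)$ in $C(Y_n)$ along the two ``along-$Y_n$'' directions, is CAT$(1)$. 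Here the combinatorial hypothesis that two distinct subtrees share at most one point is essential: it forces the other branches of $\dot X_{n-1}$ at $y$ to remain separated (in the link) from the into-cone directions of $C(Y_n)$, so the link decomposes as a disjoint gluing without extra identifications that could create short loops.
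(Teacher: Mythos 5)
Your set-up is correct and your diagnosis of the difficulty is exactly right — the cone $C(Y_n)$ is attached to $\dot X_{n-1}$ along a set that is convex on the tree side but not on the cone side, and the identification $Y_n\leftrightarrow\iota_n(Y_n)$ is not even an isometry, so Reshetnyak's gluing theorem does not apply. But the step you then label ``the main obstacle'' is the entire content of the lemma, and you only assert it. To run Cartan--Hadamard you must actually verify that the link of every point $y\in Y_n$ in $\dot X_n$ is CAT(1): at a branch point of $Y_n$, at a point lying on several $Y_i$'s (where the link must accommodate rim directions and interior directions of two different cones), and at a point of $X$ where branches not contained in $Y_n$ come in, the link is a nontrivial metric graph and one must rule out geodesic loops of length $<2\pi$. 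Even your claim that ``the other branches remain separated in the link'' is a statement about Alexandrov angles in the chain metric that needs proof (for instance, that $d(u,w)=d(u,y)+d(y,w)$ whenever $u$ lies on a branch of $X$ at $y$ outside $Y_n$ and $w$ lies in the interior of the cone). As written, the induction does the easy bookkeeping and leaves the hard local computation untouched, so there is a genuine gap.

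The paper's proof avoids the link analysis entirely by choosing a different decomposition: rather than gluing $C(Y_n)$ onto $X$ along the subtree $Y_n$, it writes $\dot X(Y)$ as the union of the cones $C(Y_i)$ and of the closures of the components of $X\setminus\bigcup_{i}Y_i$. Each of these finitely many pieces is CAT(-1) (the cones by Lemma \ref{cone over R tree}, the remaining pieces because they are trees), and — this is where the hypothesis enters — any two pieces meet in at most one point: two cones meet in at most the single point of $Y_i\cap Y_j$, and the closure of a complementary component meets a $Y_i$ in at most one point, since the arc joining two such intersection points would lie both in $Y_i$ and in the closure of the complement. A single point is a complete convex subspace, so \cite[Chap. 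II.11 Th. 11.1]{BriHae99} applies directly and finitely many such gluings yield a CAT(-1) space. (One must still check that this gluing metric coincides with the chain metric $|\ .\ |_{\dot X}$, i.e.\ that on each $Y_i$ the rim metric $\mu(\dist{X}{y}{y'})$ supersedes the tree metric and that excursions through other cones never shorten paths — your own computation in the second paragraph, using that $[p,q]\cap[y,y']$ is at most a point, is exactly the argument needed.) If you want to keep your inductive scheme, replace the local-to-global step by this point-gluing argument; otherwise the induction is not needed at all.
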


\begin{proof}
	Each $Y_i$ is a tree, thus all the cones $C(Y_i)$ are CAT(-1) (cf. Lemma \ref{cone over R tree}). Consequently the cone-off $\dot X$ is obtained by gluing a finite number of CAT(-1)-spaces that share no more than one point. These spaces are the cones and the remaining parts of $X$ on which no cone is glued. It follows that $\dot X$ is CAT(-1) (cf \cite[Chap II.11 Th. 11.1]{BriHae99}).
\end{proof}

\begin{proof}[Proof of Proposition \ref{hyperbolicity cone-off arbitrary tree}]
	Let $x$, $y$, $z$ and $t$ be four points of $\dot X$. For all $n \in \N$, we can find a finite simplicial subtree $X_n$ of $X$ and a finite collection $Y^n$ of subtrees of $X_n$ such that 
	\begin{itemize}
	\item two distinct elements of $Y^n$ share no more than one point,
	\item $x,y,z,t$ belong to $\dot X_n(Y^n)$,
	\item for all $u,v \in \{ x,y,z,t\}$, we have $\displaystyle \lim_{n \rightarrow + \infty} \dist{\dot X_n(Y_n)} uv  = \dist {\dot X(Y)} uv$.
	\end{itemize}
	This can be done in the following way.
	Let $n \in \N$.
	For all pair of points in $\{x,y,z,t\}$ we consider a chain which approximates the distance between them, with an error smaller than $\frac 1n$.
	These chains contain a finite number of points.
	Thus we choose $X_n$ and $Y^n$ such that the chains have the same length in $\dot X(Y)$ and $\dot X_n(Y^n)$. 
	Since $\dot X_n$ is $\ln3$-hyperbolic (see Lemma \ref{hyperbolicity cone-off finite tree}), $x,y,z,t$ satisfy in these spaces the hyperbolicity condition. After taking the limit, we obtain in $\dot X$ the inequality
	\begin{math}
		\gro x z t \geq \min \left\{ \gro x y t , \gro y z t \right\} - \ln3
	\end{math}.
\end{proof}

\subsection{Hyperbolicity of the cone-off over a hyperbolic space}

In this section, we generalize the previous proposition when the base $X$ is a hyperbolic space. Let $\delta$ be a positive number. We consider a geodesic, $\delta$-hyperbolic space $X$ and a collection $Y=(Y_i)_{i \in I}$ of  closed $10 \delta$-quasi-convex subsets of $X$. In order to estimate the hyperbolicity of $\dot X(Y)$, we define a constant which controls how much two elements of $Y$ overlap.

\begin{defi}
	The largest piece of $Y$, denoted by $\Delta(Y)$ is the quantity
	\begin{displaymath}
		\Delta(Y)  = \sup_{i \neq j} \diam \left( Y_i^{+ 20 \delta} \cap Y_j^{+ 20 \delta} \right)
	\end{displaymath}
\end{defi}

Assume that $X$ is an $\R$-tree, so that $\delta=0$. Then $\Delta(Y)$ is zero if and only if two distinct elements of $Y$ share no more than one point.

\begin{theo}[Second hyperbolicity theorem]
\label{second hyperbolicity theorem}
	Let $\epsilon >0$. There exist $\delta, \Delta >0$ satisfying the following properties. Consider a geodesic, $\delta$-hyperbolic space $X$ and a collection $Y=(Y_i)_{i \in I}$ of closed, $10 \delta$-quasi-convex subsets of $X$, such that $\Delta(Y) \leq \Delta$. If $x_0$ is a point of the cone-off $\dot X(Y)$ whose distance to any vertex is greater than $\frac {r_0}2$, then the ball $B\left(x_0, \frac {r_0}9\right)$ is $(\ln 3 + \epsilon)$-hyperbolic.
\end{theo}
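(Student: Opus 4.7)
The plan is to argue by contradiction, following the same asymptotic strategy used in the proof of Proposition \ref{cone over hyperbolic space}. Suppose the statement fails for some $\epsilon > 0$. Then for every $n \in \N$ we can produce a geodesic $\delta_n$-hyperbolic space $X_n$ with $\delta_n \to 0$, a family $Y_n = (Y_{n,i})_{i \in I_n}$ of closed $10\delta_n$-quasi-convex subsets with $\Delta(Y_n) \to 0$, and a point $x_n^0 \in \dot X_n(Y_n)$ at distance at least $r_0/2$ from every vertex, such that the ball $B(x_n^0, r_0/9)$ is not $(\ln 3 + \epsilon)$-hyperbolic. Fix a non-principal ultrafilter $\omega$, and form the pointed ultralimits $X = \limo(X_n, p(x_n^0))$ and $\dot X^\infty = \limo(\dot X_n, x_n^0)$. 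By the corollary following Proposition \ref{ultra limit of hyperbolic spaces}, $X$ is an $\R$-tree; moreover, for each orbit of indices one obtains a limit subset $Y_i = \limo Y_{n,i(n)} \subset X$, and these are subtrees of $X$.

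The first key step is to check that the limit family $Y = (Y_i)$ satisfies the hypothesis of Proposition \ref{hyperbolicity cone-off arbitrary tree}, namely that two distinct subtrees share at most one point. This is exactly the content of Corollary \ref{limit diameter intersection}: since $\Delta(Y_n) \to 0$, we get $\diam(Y_i \cap Y_j) \leq \limo \diam(Y_{n,i}^{+20\delta_n} \cap Y_{n,j}^{+20\delta_n}) = 0$ whenever the two indices are not identified. Hence Proposition \ref{hyperbolicity cone-off arbitrary tree} applies and $\dot X(Y)$ is $\ln 3$-hyperbolic.

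The second and more delicate step, which is the main obstacle, is to compare $\dot X^\infty$ with $\dot X(Y)$ on a neighbourhood of the base point. The natural map sending $(\limo y_n, r) \in \dot X(Y)$ to the point of $\dot X^\infty$ represented by $(y_n, r) \in \dot X_n$ is obviously $1$-Lipschitz (chains in $\dot X(Y)$ lift to chains in each $\dot X_n$). The reverse inequality is the delicate point: one must approximate the distance $\dist{\dot X_n}{x_n}{x'_n}$ by a chain whose number of points is bounded independently of $n$, so that the chain passes to the $\omega$-limit. This is precisely what Proposition \ref{uniform approximation distance cone off} provides: given an error $\epsilon'$, a chain of at most $M/\sqrt{\epsilon'}$ points realizes the distance up to $\epsilon'$. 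Taking the $\omega$-limit of these finitely many chain points yields a chain in $\dot X(Y)$ of the same length, so the comparison map is an isometry in restriction to the $r_0/9$-ball around the base point (one must also verify that the hypothesis $\distX{x_n^0}{v_{n,i}} > r_0/2$ passes to the limit to ensure no vertex enters the ball and distorts the analysis).

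Once the local isometry between $B(x^0, r_0/9) \subset \dot X^\infty$ and the corresponding ball in $\dot X(Y)$ is established, the conclusion is immediate: this ball is $\ln 3$-hyperbolic in $\dot X(Y)$, hence also in $\dot X^\infty$. On the other hand, its diameter is bounded by $2r_0/9$, and by Proposition \ref{reverse ultra limit of hyperbolic space} the balls $B(x_n^0, r_0/9)$ must then be $(\ln 3 + \epsilon)$-hyperbolic $\omega$-almost surely, contradicting our choice of the sequence. The main technical effort therefore concentrates entirely on the commutation of cone-off and ultralimit, which rests on the uniform chain approximation; everything else is a transcription of the scheme used for the cone.
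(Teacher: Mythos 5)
Your proposal is correct and follows essentially the same route as the paper: contradiction via a sequence of counterexamples, passage to the ultralimit where the base becomes an $\R$-tree whose limit subtrees meet in at most one point (Corollary \ref{limit diameter intersection}), $\ln 3$-hyperbolicity of the cone-off over the tree (Proposition \ref{hyperbolicity cone-off arbitrary tree}), a local isometry between the cone-off over the ultralimit and the ultralimit of the cone-offs built from the uniform chain approximation (Proposition \ref{uniform approximation distance cone off}), and the conclusion via Proposition \ref{reverse ultra limit of hyperbolic space}. You have correctly identified the genuinely delicate step (commuting cone-off and ultralimit, which requires the uniformly bounded chains) exactly as the paper does.
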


\rem This theorem is an extension of Proposition \ref{hyperbolicity cone-off arbitrary tree} for spaces that may be viewed as $\delta$-perturbed $\R$-trees. 
Thus it is possible to prove that $\dot X(Y)$ satisfies the CAT(-1)-condition with a small error, that depends only on $\delta$ and $\Delta$.
M. Gromov introduced in \cite{Gro01} the notion of ${\text{CAT(-1,}\epsilon\text{)}}$-spaces that formalizes this idea. 
It was also developed in \cite{DelGro08}.
Since we are only interested in the hyperbolicity of $\dot X(Y)$, we will not use it.

\paragraph{} In \cite[Hyperbolic coning lemma]{Gro01}, M. Gromov gave a quantitative statement of this result.
We propose here a detailed proof, that provides a qualitative version of the theorem.
The strategy is the following. 
Assuming that this theorem is false gives a family $X_n$ of $\delta_n$-hyperbolic counter-examples with $\delta_n$ tending to zero. 
Taking the limit gives the cone off over an $\R$-tree which we already know to be $\ln 3$-hyperbolic. 
This is a contradiction according to Corollary \ref{reverse ultra limit of hyperbolic space}. 
The point is to construct a local isometry between the cone-off over the ultra-limit of $(X_n)$ and the ultra-limit of the cones-off over $X_n$.

\begin{proof}
	Assume that the theorem is false. Then for all $n \in \N$, we can find 
	\begin{enumerate}
		\item a geodesic, $\delta_n$-hyperbolic space $X_n$, with $\delta_n = o(1)$,
		\item a collection $Y_n = (Y_{n,i_n})_{i_n \in I_n}$ of closed, $10 \delta_n$-quasi-convex subsets of $X_n$, with $\Delta(Y_n) = o(1)$,
		\item a point $x^0_n \in \dot X_n(Y_n)$ whose distance to any vertex is greater than $\frac {r_0}2$  and such that the ball $B\left(x^0_n, \frac{r_0}8\right)$ is not $(\ln3 + \epsilon)$-hyperbolic.
	\end{enumerate}
	
	We fix a non-principal ultra-filter $\omega$ in order to study the limit space $\limo \left(\dot X_n, x^0_n\right)$. 
	First we consider several objects. 
	\begin{itemize}
		\item $x^0 = \limo x^0_n$
		\item $X = \limo \left(X_n, p_n\left(x^0_n\right) \right)$ (Recall that $p_n$ is the projection from $\dot X_n$ onto $X_n$.)
		\item $I = \Pi_{n \in \N} I_n/\sim $ where $\sim$ is the equivalence relation defined by $i\sim j$ if $i_n = j_n$, \oasly.
		\item If $i$ is a sequence of $\Pi_{n \in \N} I_n$, we define $Y_i=\limo Y_{n,i_n}$.
	\end{itemize}
	
\begin{lemm}
	Let $i=(i_n)$ and $j=(j_n)$ be two sequences of $ \Pi_{n \in \N} I_n$. If $i_n = j_n$, \oasly, then $Y_i = Y_j$ otherwise, $\diam \left(Y_i \cap Y_j\right)=0$.
\end{lemm}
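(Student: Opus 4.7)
The plan is to split into two cases according to whether $i_n = j_n$ holds $\omega$-almost surely or not, and in each case reduce directly to definitions already set up.

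First case: suppose $i_n = j_n$, $\omega$-as. Then the sequences $(Y_{n,i_n})$ and $(Y_{n,j_n})$ coincide $\omega$-as. Since the ultra-limit of a sequence of subsets depends only on its $\omega$-almost sure values (an element $\limo y_n$ lies in $\limo Y_{n,i_n}$ iff $y_n \in Y_{n,i_n}$, $\omega$-as), we get $Y_i = Y_j$ directly from the definition of $\limo Y_{n,i_n}$.

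Second case: suppose the previous does not hold. Because $\omega$ takes values in $\{0,1\}$, the negation means that $i_n \neq j_n$, $\omega$-as. For every such $n$, the definition of the largest piece $\Delta(Y_n)$ gives
\begin{displaymath}
\diam\left(Y_{n,i_n}^{+20\delta_n} \cap Y_{n,j_n}^{+20\delta_n}\right) \leq \Delta(Y_n).
\end{displaymath}
Each $Y_{n,i_n}$ is $10\delta_n$-quasi-convex in the $\delta_n$-hyperbolic space $X_n$, so the hypotheses of Corollary \ref{limit diameter intersection} are satisfied with the sequence $(\delta_n)$ tending $\omega$-to $0$. Applying it to the ultra-limits $Y_i = \limo Y_{n,i_n}$ and $Y_j = \limo Y_{n,j_n}$ yields
\begin{displaymath}
\diam(Y_i \cap Y_j) \leq \limo \diam\left(Y_{n,i_n}^{+20\delta_n} \cap Y_{n,j_n}^{+20\delta_n}\right) \leq \limo \Delta(Y_n) = 0,
\end{displaymath}
the last equality using the standing assumption $\Delta(Y_n) = o(1)$. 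Hence $\diam(Y_i \cap Y_j) = 0$.

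There is no real obstacle here: the whole statement is an immediate consequence of the way ultra-limits of subsets are defined together with Corollary \ref{limit diameter intersection}. The only subtle point is the dichotomy in the first paragraph, which relies on $\omega$ being $\{0,1\}$-valued so that either $\{n : i_n = j_n\}$ or its complement has $\omega$-measure one.
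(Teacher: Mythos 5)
Your proof is correct and follows exactly the same route as the paper: the first case is immediate from the definition of the $\omega$-limit of subsets, and the second case combines the bound $\diam\bigl(Y_{n,i_n}^{+20\delta_n}\cap Y_{n,j_n}^{+20\delta_n}\bigr)\leq\Delta(Y_n)$ with Corollary \ref{limit diameter intersection} and $\Delta(Y_n)=o(1)$. Your explicit remark on the dichotomy coming from $\omega$ being $\{0,1\}$-valued is a point the paper leaves implicit, but the argument is the same.
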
	

\begin{proof}
	If  $i_n = j_n$, \oasly, the equality $Y_i = Y_j$ follows from the definition of the $\omega$-limit of a sequence of subsets.  In the other case, we have $i_n \neq j_n$, \oasly. Hence $\diam \left( Y_{n,i_n}^{+ 20 \delta_n} \cap Y_{n,j_n}^{+ 20 \delta_n}\right) \leq \Delta_n$, \oasly.
	Thus Corollary \ref{limit diameter intersection} gives that  $\diam \left(Y_i \cap Y_j\right) \leq \limo \diam \left( Y_{n,i_n}^{+ 20 \delta_n} \cap Y_{n,j_n}^{+ 20 \delta_n}\right) = 0$. 
\end{proof}
	
Thanks to the previous lemma, we may consider the collection $Y= (Y_i)_{i\in I}$.

\begin{lemm}
	The cone-off $\dot X(Y)$ is $\ln 3$-hyperbolic.
\end{lemm}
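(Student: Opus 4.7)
The plan is to reduce the statement to Proposition \ref{hyperbolicity cone-off arbitrary tree}, which handles exactly the situation of a cone-off over an $\R$-tree relative to a family of subtrees whose pairwise intersections contain at most one point. To apply it, I need three ingredients: (i) $X$ is an $\R$-tree, (ii) each $Y_i$ is a subtree of $X$, and (iii) distinct $Y_i$ and $Y_j$ share no more than one point.

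Ingredient (i) is essentially free. Each $X_n$ is geodesic and $\delta_n$-hyperbolic with $\limo \delta_n = 0$, so the Corollary following Proposition \ref{ultra limit of hyperbolic spaces}, applied to the sequence $\left(X_n, p_n(x^0_n)\right)$, tells me that $X = \limo X_n$ is a geodesic $0$-hyperbolic space, hence an $\R$-tree. Ingredient (iii) is precisely what the preceding lemma delivers: whenever $i \ne j$ in $I$, the diameter of $Y_i \cap Y_j$ vanishes, so the two subsets meet in at most one point.

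The real work is ingredient (ii). I want to show that each $Y_i$ is geodesically convex in the $\R$-tree $X$, which then forces $Y_i$ to be itself an $\R$-tree, i.e.\ a subtree. The key fact to establish is that the ultralimit of the $10\delta_n$-quasi-convex subsets $Y_{n,i_n}$ is $0$-quasi-convex in $X$. Given two points $x = \limo x_n$ and $x' = \limo x'_n$ of $Y_i$ (with $x_n, x'_n \in Y_{n,i_n}$ \oasly), I consider the geodesics $\gamma_n = \geo{x_n}{x'_n}$ in $X_n$; for each parameter $s$, the sequence $\gamma_n(s)$ defines a point $\gamma(s) \in X$, and by uniqueness of geodesics in the $\R$-tree $X$ these points trace out the geodesic $\geo{x}{x'}$. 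Quasi-convexity gives $\distL{\gamma_n(s)}{Y_{n,i_n}} \leq 10 \delta_n$, and since $\limo 10\delta_n = 0$ we conclude $\gamma(s) \in Y_i$. Hence $Y_i$ is convex, i.e.\ a subtree, and Proposition \ref{hyperbolicity cone-off arbitrary tree} applies directly to conclude that $\dot X(Y)$ is $\ln 3$-hyperbolic.

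The main subtlety is the passage from quasi-convexity in the $X_n$'s to strict convexity in the limit; this hinges on the fact that geodesics in an ultralimit of geodesic spaces lift to sequences of geodesics in the approximating spaces, which is a standard but essential tool. Once this is in place, the rest is a transparent application of the already-proved tree case.
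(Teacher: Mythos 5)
Your proof is correct and follows the same route as the paper: the paper's own argument is exactly this reduction to Proposition \ref{hyperbolicity cone-off arbitrary tree}, citing the corollary on ultralimits of $\delta_n$-hyperbolic spaces for $X$ being an $\R$-tree and the preceding lemma for the intersection condition. The only difference is that you spell out the (correct) verification that the ultralimit of the $10\delta_n$-quasi-convex sets $Y_{n,i_n}$ is convex in the limit tree, a step the paper asserts in one sentence.
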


\begin{proof}
	Since for all $n \in \N$, $X_n$ is geodesic and $\delta_n$-hyperbolic with $\delta_n = o(1)$, $X$ is an $\R$-tree. 
	Furthermore, any $Y_{n,i_n}$ is a $10 \delta_n$-quasi-convex subset of $X_n$. 
	Thanks to the previous lemma, $Y$ is a collection of subtrees such that two distinct elements of $Y$ share no more than one point. 
	Applying Proposition \ref{hyperbolicity cone-off arbitrary tree}, $\dot X(Y)$ is $\ln 3$-hyperbolic.
\end{proof}

The next step is to produce a local isometry between $\dot X(Y)$, the cone-off over $\limo X_n$, and $\limo \left(\dot X_n, x^0_n\right)$.
For this purpose we consider the following maps.
\begin{displaymath}
\begin{array}{rccccrccc}
	\psi :	& X			& \rightarrow	& \limo \dot X_n	& \quad	& \psi_{i} : 	& C(Y_{i})						& \rightarrow	& \limo \dot X_n \\
			&	\limo x_n	& \rightarrow	& \limo x_n			&			&					& \left(\limo y_n,r\right)	&	\rightarrow	& \limo (y_n,r)		\\
\end{array}
\end{displaymath}

These maps induce an function $\dot \psi$ from $\dot X$ to $\limo \dot X_n$, such that its restriction to $X$ (\resp $C(Y_{i})$) is $\psi$ (\resp $\psi_{i}$). At first, we prove that this map is 1-lipschitz, then we prove that it induces a local isometry.

\begin{lemm}
	Let $x$ and $x'$ be two points of $\dot X$. We have $\distSC x {x'} \geq \distX {\dot \psi (x)}{\dot \psi(x')}$.
\end{lemm}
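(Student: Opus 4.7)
The plan is to run through the three cases of Lemma \ref{some facts about distSC}, which describe exactly when $\distSC x{x'}$ is finite and how it is computed. The case $\distSC x{x'} = +\infty$ is trivial, so the work lies in the two finite cases, where the key point in each case is that $\dist{\dot X_n}{\cdot}{\cdot}$ is the \emph{infimum over chains} in $\dot X_n$, so it is bounded above by any of the ``direct'' distances (in $X_n$, or in a single cone $C(Y_{n,i_n})$).

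\textbf{Step 1: both points in $X$.} Write $x = \limo x_n$ and $x' = \limo x'_n$ with $x_n, x'_n \in X_n$. By Lemma \ref{some facts about distSC}, $\distSC x {x'} = \dist X x {x'} = \limo \dist{X_n}{x_n}{x'_n}$. The two-term sequence $(x_n, x'_n)$ is a chain in $\dot X_n$, so $\dist{\dot X_n}{x_n}{x'_n} \leq \dist{X_n}{x_n}{x'_n}$. Passing to the $\omega$-limit and using the definition of $\dot \psi$ on $X$,
\begin{displaymath}
	\distX{\dot \psi(x)}{\dot \psi(x')} = \limo \dist{\dot X_n}{x_n}{x'_n} \leq \limo \dist{X_n}{x_n}{x'_n} = \distSC x {x'}.
\end{displaymath}

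\textbf{Step 2: both points in the same cone $C(Y_i)$.} Write $x = (y,r)$ and $x' = (y',r')$ with $y = \limo y_n$, $y' = \limo y'_n$, $y_n, y'_n \in Y_{n,i_n}$ \oasly. By Lemma \ref{some facts about distSC}, $\distSC x {x'} = \dist{C(Y_i)}{x}{x'}$, which by Formula (\ref{distance cone}) depends only on $r$, $r'$ and the angle $\angle y{y'}$, a continuous function of $\dist X y {y'} = \limo \dist{X_n}{y_n}{y'_n}$. In $\dot X_n$ the pair $(y_n,r), (y'_n, r')$ lies in the cone $C(Y_{n,i_n})$, so again the two-term chain gives
\begin{displaymath}
	\dist{\dot X_n}{(y_n,r)}{(y'_n,r')} \leq \dist{C(Y_{n,i_n})}{(y_n,r)}{(y'_n,r')}.
\end{displaymath}
The right hand side is the value of the same continuous formula applied to $r$, $r'$ and $\angle {y_n}{y'_n}$. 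Taking the $\omega$-limit and using continuity (and that $\limo \angle{y_n}{y'_n} = \angle{y}{y'}$), we obtain
\begin{displaymath}
	\distX{\dot \psi(x)}{\dot \psi(x')} = \limo \dist{\dot X_n}{(y_n,r)}{(y'_n, r')} \leq \dist{C(Y_i)}{x}{x'} = \distSC x {x'}.
\end{displaymath}

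\textbf{Step 3: conclusion.} If none of the above cases applies, $\distSC x {x'} = +\infty$ by Lemma \ref{some facts about distSC}, and the inequality is trivial. Combining the three cases proves the lemma. I expect no real obstacle: the only delicate point is to make sure, in the cone case, that the sequences chosen to represent $x$ and $x'$ indeed satisfy $y_n, y'_n \in Y_{n,i_n}$ \oasly, which is guaranteed by the very definition $Y_i = \limo Y_{n,i_n}$ and the definition of $\psi_i$.
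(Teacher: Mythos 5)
Your proof is correct and follows essentially the same route as the paper: the same three-case split from Lemma \ref{some facts about distSC}, the same observation that the chain-infimum metric on $\dot X_n$ is dominated by the direct distance in $X_n$ or in a single cone, and the same passage to the $\omega$-limit using continuity of the cone distance formula. (Just make sure the heading of your Step 1 is read as the lemma's case (ii), i.e.\ both points in $X$ with no common $Y_i$, since the case where they share a $Y_i$ belongs to your Step 2.)
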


\begin{proof}
	We distinguish three cases.
	\begin{enumerate}
		\item Assume that there is $i \in I$ such that  $x, x' \in C(Y_{i})$. Then we can write $x = (y,r)$ and $x'=(y',r')$ where $y = \limo y_n$ and $y' = \limo y'_n$ are two points of $Y_{i}$. In this situation we have
		\begin{eqnarray*}
			\distSC x {x'} & = & \dist{C(Y_{i})} x {x'} \\
			& = & \arccosh\left( \cosh r \cosh r' - \sinh r \sinh r' \cos \angle y {y'} \right)\\
		\end{eqnarray*}
		By continuity, it gives.
		\begin{eqnarray*}
			\distSC x {x'}	& =		& \limo \arccosh\left( \cosh r \cosh r' - \sinh r \sinh r' \cos \angle {y_n} {y'_n} \right) \\
									& =		& \limo \dist{C(Y_{n,i_n})} {(y_n,r)}{(y'_n,r')} \\
									& \geq	& \limo \dist{\dot X_n} {(y_n,r)}{(y'_n,r')} \\
									& = 		&  \distX {\dot \psi (x)}{\dot \psi(x')}
		\end{eqnarray*}
		\item Assume $x=\limo x_n$ and $x' = \limo x'_n$ belong to $X$, but there is no $i \in I$ such that $x,x' \in Y_{i}$.
		In this case $\distSC x {x'} = \dist X x {x'} = \limo \dist{X_n} {x_n}{x'_n}$. 
		However, for all $n \in \N$, we have $\dist{X_n} {x_n}{x'_n} \geq \dist{\dot X_n} {x_n}{x'_n}$. 
		Thus $\distSC x {x'} \geq \limo \dist{\dot X_n} {x_n}{x'_n} = \distX {\dot \psi (x)}{\dot \psi(x')}$.
		\item In all other cases, $\distSC x {x'} = + \infty$. There is nothing to prove.
	\end{enumerate}
\end{proof}

\begin{coro}
	The map $\dot \psi : \dot X \rightarrow \limo \dot X_n$ is 1-Lipschitz, where $\dot X$ is the cone-off over $\limo X_n$.
\end{coro}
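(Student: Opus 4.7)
The claim is the direct 1-Lipschitz upgrade of the pointwise inequality $\distSC{x}{x'} \geq \distX{\dot\psi(x)}{\dot\psi(x')}$ just established. So the plan is very short: pass from the pseudo-distance $\distSCV$ on $\dot X$ to the genuine metric $\distV{\dot X}$ by using that the latter is the infimum of $\distSCV$-lengths of chains, and invoke the triangle inequality in $\limo \dot X_n$.

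More precisely, first I would fix $x, x' \in \dot X$ and $\epsilon > 0$, and pick a chain $C = (z_1, \dots, z_m)$ between $x$ and $x'$ with $l(C) \leq \distL{\dot X}{x}{x'} + \epsilon$ (which exists by definition of $\distV{\dot X}$, and which we can even assume has its interior points in $\limo X_n$ via Lemma \ref{chain in X}, though this is unnecessary here). Applying the previous lemma to each consecutive pair gives $\distSC{z_j}{z_{j+1}} \geq \distX{\dot\psi(z_j)}{\dot\psi(z_{j+1})}$, so summing over $j$ yields
\begin{displaymath}
l(C) \;\geq\; \sum_{j=1}^{m-1} \distX{\dot\psi(z_j)}{\dot\psi(z_{j+1})} \;\geq\; \distX{\dot\psi(x)}{\dot\psi(x')},
\end{displaymath}
where the last inequality is simply the triangle inequality in $\limo \dot X_n$ along the image chain. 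Hence $\distL{\dot X}{x}{x'} + \epsilon \geq \distX{\dot\psi(x)}{\dot\psi(x')}$, and letting $\epsilon \to 0$ finishes the argument.

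There is essentially no obstacle: all the real work was done in the preceding lemma, where one had to handle the three cases (two points in the same cone, two points in $X$ but not sharing a common $Y_i$, and everything else returning $+\infty$) and exploit the continuity of the cone distance formula to push the values into the $\omega$-limit. The present statement is just the standard passage from a pseudo-distance defined by single-step comparisons to the induced chain-distance. It is also worth noting that the only role played by $\dot X$ being the cone-off over the ultra-limit, rather than some other domain, is to ensure that the pointwise inequality applies to every pair of consecutive chain points, which is exactly what the previous lemma provides.
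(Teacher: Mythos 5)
Your proof is correct and is essentially the paper's own argument: take a chain whose length approximates $\dist{\dot X}{x}{x'}$, apply the pointwise inequality of the preceding lemma to each consecutive pair, and conclude by the triangle inequality in $\limo \dot X_n$ (the paper takes the infimum over all chains directly rather than via an $\epsilon$-approximant, which is the same thing). The only nitpick is notational: at this stage the relevant quantity is the chain-infimum metric $\dist{\dot X}{x}{x'}$, not the length metric $d$ introduced later, but this does not affect the argument.
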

	
\begin{proof}
	Let $x$ and $x'$ be two points of $\dot X$. Consider a chain $C=(z_1,\dots z_m)$ of points of $\dot X$ between $x$ and $x'$. Using the previous lemma, we have
	\begin{displaymath}
		\distX {\dot \psi (x)}{\dot \psi (x')} 
		\leq \sum_{j=1}^{m-1} \distX {\dot \psi (z_j)}{\dot \psi (z_{j+1})} 
		\leq \sum_{j=1}^{m-1} \distSC {\dot \psi (z_j)}{\dot \psi (z_{j+1})} 
		= l(C)
	\end{displaymath}
	After taking the infimum over all chains between $x$ and $x'$, we obtain $\distX {\dot \psi (x)}{\dot \psi (x')}  \leq \dist{\dot X} x {x'}$.
\end{proof}

We now construct a partial inverse function of $\dot \psi$.
\begin{lemm}
	There is a map $\dot \phi : B\left(x^0, \frac {r_0}3\right) \subset \limo \dot X_n \rightarrow \dot X$ such that 
	$\dot \phi$ induces a bijection onto the ball $B\left(\dot \phi(x^0), \frac {r_0}3\right)$, whose inverse is $\dot \psi$.
\end{lemm}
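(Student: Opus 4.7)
The plan is to define $\dot\phi$ by a case analysis on a representative sequence of the limit point, then verify that the definition is independent of the representative and that both compositions $\dot\psi\circ\dot\phi$ and $\dot\phi\circ\dot\psi$ are the identity.

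Given $x = \limo x_n \in B(x^0, r_0/3)$, since $\dist{\dot X_n}{x^0_n}{v_{n,i}} \geq r_0/2$ for every vertex $v_{n,i}$ of $\dot X_n$ and the sequence $(x_n)$ is $\omega$-eventually within $r_0/3$ of $x^0_n$, each $x_n$ stays at distance at least $r_0/6$ from every vertex $\omega$-almost surely. The projection $p_n(x_n)$ is therefore defined, and Proposition~\ref{comparison distance base cone-off} gives
\begin{displaymath}
    \dist{X_n}{p_n(x^0_n)}{p_n(x_n)} \leq \frac{3\pi\sinh r_0}{r_0}\dist{\dot X_n}{x^0_n}{x_n},
\end{displaymath}
so $(p_n(x_n))$ is $\omega$-essentially bounded in $X_n$.

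For each $n$, the point $x_n$ lies either in $X_n$ (Case~A) or in the open cone $C(Y_{n,i_n})\setminus\iota_n(Y_{n,i_n})$ for a uniquely determined index $i_n \in I_n$ (Case~B); by the ultrafilter, one alternative holds $\omega$-almost surely. In Case~A I would set $\dot\phi(x) = \limo x_n \in X \subset \dot X$. In Case~B, writing $x_n = (y_n, r_n)$, I would let $i \in I$ be the class of $(i_n)$, set $y = \limo y_n \in Y_i$ and $r = \limo r_n \in [0, r_0]$, and put $\dot\phi(x) = (y, r) \in C(Y_i) \subset \dot X$.

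The delicate point is independence from the chosen representative. A transition between Cases~A and B can occur only when $r_n$ is $\omega$-close to $r_0$, in which case $(y_n, r_n)$ and $\iota_n(y_n)$ have the same $\omega$-limit, and the two recipes agree under the identification $\iota_i(y) = y$ in $\dot X$. The subtler situation is two representatives $(x_n), (x'_n)$ both in Case~B with index sequences that are not $\omega$-equivalent. Here I would invoke the uniform chain approximation of Proposition~\ref{uniform approximation distance cone off} to realize $\dist{\dot X_n}{x_n}{x'_n}$ by a chain of length uniformly bounded in $n$; since the two open cones are disjoint, any such chain must cross the base along $Y_{n,i_n}^{+20\delta_n} \cap Y_{n,i'_n}^{+20\delta_n}$, whose diameter is at most $\Delta(Y_n) = o(1)$. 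Passing to the $\omega$-limit then forces $Y_i = Y_{i'}$ and $y = y'$, resolving the ambiguity. The inverse relations follow by direct computation: in Case~B, $\dot\psi((y,r)) = \limo(y_n, r)$ coincides with $\limo(y_n, r_n) = x$ because $\dist{\dot X_n}{(y_n, r)}{(y_n, r_n)} = |r - r_n|$ has $\omega$-limit zero, Case~A is trivial, and since $\dot\psi$ is 1-Lipschitz it sends $B(\dot\phi(x^0), r_0/3)$ into $B(x^0, r_0/3)$, so the symmetric verification of $\dot\phi\circ\dot\psi$ on that ball yields the claimed bijection. I expect the main obstacle to be precisely the well-definedness argument in the differing-index case, which is where the small cancellation hypothesis $\Delta(Y_n) \to 0$ enters in an essential way, ensuring that $i \in I$ is an invariant of the limit point rather than of the representative.
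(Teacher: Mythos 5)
Your construction of $\dot\phi$ is the same as the paper's: project via $p_n$, use Proposition \ref{comparison distance base cone-off} to get $\omega$-essential boundedness of $\left(\dist{X_n}{p_n(x^0_n)}{p_n(x_n)}\right)$, and split according to whether $x_n$ lies in a cone or in $X_n$ $\omega$-almost surely; the paper then simply asserts that the required properties hold, so your attempt to verify well-definedness and the inverse relations goes beyond what is written there. Most of that verification is fine, but the subcase you single out as delicate --- two representatives lying $\omega$-almost surely in cones with non-equivalent index sequences --- is resolved incorrectly. From the fact that a short chain crosses $Y_{n,i_n}^{+20\delta_n}\cap Y_{n,i'_n}^{+20\delta_n}$ you conclude that ``$Y_i=Y_{i'}$'', but the relevant lemma of the paper only yields $\diam\left(Y_i\cap Y_{i'}\right)=0$, i.e.\ the limit trees share at most one point; distinct cones over such sets still have disjoint interiors in $\dot X$, so this does not identify the two candidate images. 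The correct (and simpler) resolution does not use $\Delta(Y_n)\to 0$ at all: if $x_n=(y_n,r_n)$ with $\limo r_n<r_0$, then by the argument of Lemma \ref{distance coincide on the cone} any chain leaving $C(Y_{n,i_n})$ has length at least $r_0-r_n$, which is bounded away from $0$, so a second representative at distance tending to $0$ must lie in the \emph{same} cone $\omega$-almost surely and the differing-index situation cannot occur; while if $\limo r_n=r_0$ the limit lands on $\iota_i(Y_i)$, which is identified in $\dot X$ with the point $\limo p_n(x_n)$ of $X$, and that point is representative-independent by Proposition \ref{comparison distance base cone-off}. With that subcase repaired your argument is complete; the hypothesis $\Delta(Y_n)\to 0$ is needed elsewhere (to make the limit collection a family of trees meeting in at most one point), not for the well-definedness of $\dot\phi$.
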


\begin{proof}
	Let $x = \limo x_n$ be a point of $B\left(x^0, \frac {r_0}3\right)$. 
	By construction, the distance between $x^0_n$ and any vertex of $\dot X_n$ is greater than $\frac {r_0}2$. 
	Thus applying Lemma \ref{comparison distance base cone-off}, we have 
	\begin{displaymath}
	\dist {X_n} {p_n(x^0_n)}{p_n(x_n)} \leq \frac{3 \pi \sinh r_0}{r_0} \dist {\dot X_n} {x^0_n}{x_n}
	\end{displaymath}
	It follows that $\left( \dist {X_n} {p_n(x^0_n)}{p_n(x_n)} \right)$ is \oeb. Hence $\limo p_n(x_n)$  defines  a point  in $X$. We now distinguish two cases.
	\begin{enumerate}
		\item If there is $i \in I$ such that $x_n$ belongs to $C(Y_{n,i_n})$ \oasly, then $x_n$ can be written $x_n = (p_n(x_n), r_n)$ \oasly. 
		Since $(r_n)$ is bounded, we may consider $r = \limo r_n$. 
		We define $\dot \phi (x)$ as the point  $(\limo p_n(x_n), r)$  of $C(Y_{i})$.
		\item If $x_n$ belongs to $X_n$ \oasly, then we define  $\dot \phi (x)$ as the point  $\limo p_n(x_n)$  of $X$.
	\end{enumerate}
	
	The properties of $\dot \phi$ are satisfied.
\end{proof}

\begin{lemm}
	Let $x = \limo x_n$ and $x' = \limo x'_n$ be two points of $B\left(x^0, \frac {r_0}3\right)$ such that $\left(\distSC {x_n}{x'_n}\right)$ is  \oeb.
	We have $\limo \distSC {x_n}{x'_n} \geq \distSC {\dot \phi (x)}{\dot \phi (x')}$.
\end{lemm}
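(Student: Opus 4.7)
The idea is to argue by case analysis, driven by the fact that $\omega$-almost surely $\distSC{x_n}{x'_n}<+\infty$. Combined with Lemma \ref{some facts about distSC}, this forces $\omega$-almost surely one of the two following situations (after choosing, when necessary, representatives that realize the minimum in the definition of $\distSC$):
\begin{enumerate}
\item there exists a sequence $(i_n)$ of indices with $x_n,x'_n\in C(Y_{n,i_n})$;
\item $x_n,x'_n\in X_n$ and the two points do not lie in a common $Y_{n,i_n}$.
\end{enumerate}
These two cases may \emph{a priori} overlap when $x_n,x'_n\in\iota_{n,i_n}(Y_{n,i_n})$, but both options then give the same point of $\dot X_n$, so a coherent choice can be made $\omega$-almost surely.

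\paragraph{Case (i).}
Write $x_n=(p_n(x_n),r_n)$ and $x'_n=(p_n(x'_n),r'_n)$. Since $x,x'\in B(x^0,r_0/3)$ and $x^0_n$ is at distance greater than $r_0/2$ from every vertex, Proposition \ref{comparison distance base cone-off} ensures that the sequences $(\dist{X_n}{p_n(x^0_n)}{p_n(x_n)})$ and $(\dist{X_n}{p_n(x^0_n)}{p_n(x'_n)})$ are \oeb, while $r_n,r'_n\in[0,r_0]$. Setting $i=[(i_n)]\in I$, $r=\limo r_n$, $r'=\limo r'_n$, the definition of $\dot\phi$ yields $\dot\phi(x)=(\limo p_n(x_n),r)$ and $\dot\phi(x')=(\limo p_n(x'_n),r')$ in $C(Y_i)$. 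Because the metric on $Y_i\subset X=\limo X_n$ is exactly the $\omega$-limit of the metrics on $Y_{n,i_n}\subset X_n$, and because formula (\ref{distance cone}) is continuous in all of its arguments, passing to the $\omega$-limit gives
\begin{displaymath}
\limo\distSC{x_n}{x'_n}
=\limo\dist{C(Y_{n,i_n})}{x_n}{x'_n}
=\dist{C(Y_i)}{\dot\phi(x)}{\dot\phi(x')}
=\distSC{\dot\phi(x)}{\dot\phi(x')},
\end{displaymath}
so the inequality of the lemma is actually an equality in this case.

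\paragraph{Case (ii).}
Here $\dot\phi(x)=\limo x_n=x$ and $\dot\phi(x')=\limo x'_n=x'$ as points of $X\subset\dot X$, and
\begin{math}
\limo\distSC{x_n}{x'_n}=\limo\dist{X_n}{x_n}{x'_n}=\dist X x{x'}
\end{math}. Applying Lemma \ref{some facts about distSC} inside $\dot X$, $\distSC{\dot\phi(x)}{\dot\phi(x')}$ equals either $\dist X x{x'}$ or $\mu(\dist X x{x'})$, and Proposition \ref{function mu} gives $\mu(t)\leq t$. In either sub-case the desired inequality follows.

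\paragraph{Main obstacle.}
The delicate point is not the estimates themselves, which reduce to continuity of formula (\ref{distance cone}) and to the bound $\mu\leq\mathrm{id}$, but rather the combinatorial bookkeeping around boundary points $x_n\in\iota_{n,i_n}(Y_{n,i_n})$ admitting two distinct representations in the disjoint sum $X_n\sqcup\bigsqcup_{i_n}C(Y_{n,i_n})$. One has to check that the representative minimizing $\distSC{x_n}{x'_n}$ on each side is compatible with the choice of representative used in the definition of $\dot\phi$, so that the $\omega$-limit of the former matches the $\distSC$-value of the latter. Once this compatibility is established, the case split is clean and each branch is a direct continuity/monotonicity argument.
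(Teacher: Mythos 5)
Your proof is correct and takes essentially the same route as the paper: the same two-case split coming from Lemma \ref{some facts about distSC} (both points in a common cone, handled by continuity of formula (\ref{distance cone}) under the $\omega$-limit; both points in $X_n$, handled via the ultralimit metric and $\mu(t)\leq t$). The only differences are cosmetic: you note an equality in the cone case where the paper conservatively writes an inequality, and you make explicit the $\mu(t)\leq t$ step and the bookkeeping at boundary points that the paper leaves implicit.
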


\begin{proof}
 We distinguish two cases.
 \begin{enumerate}
 	\item If there is $i\in I$ such that $x_n$ and $x'_n$ belong to $C(Y_{n,i_n})$ \oasly, then $\distSC {x_n}{x'_n} = \dist {C(Y_{n,i_n})} {x_n}{x'_n}$ \oasly. By continuity we have
 	\begin{displaymath}
		\limo \dist {C(Y_{n,i_n})} {x_n}{x'_n} = \dist{C(Y_{i})} {\dot \phi (x)}{\dot \phi (x')} \geq \distSC  {\dot \phi (x)}{\dot \phi (x')}
 	\end{displaymath}
 	Thus $\limo \distSC {x_n}{x'_n} \geq \distSC  {\dot \phi (x)}{\dot \phi (x')}$.
 	
 	\item If $x_n, x'_n \in X_n $ \oasly, but there is no $i \in I$ such that $x_n$ and $x'_n$ belongs to $C(Y_{n,i_n})$ \oasly, then $\distSC {x_n}{x'_n} = \dist {X_n} {x_n}{x'_n}$ \oasly. In this case we have
 	\begin{displaymath}
		\limo \dist {X_n}{x_n}{x'_n}  = \dist X  {\dot \phi (x)}{\dot \phi (x')} \geq \distSC  {\dot \phi (x)}{\dot \phi (x')}
 	\end{displaymath}
 	Thus $\limo \distSC {x_n}{x'_n} \geq \distSC  {\dot \phi (x)}{\dot \phi (x')}$.
 \end{enumerate}
\end{proof}

\paragraph{}
The proof of the next corollary uses the uniform approximation of the distance on the cone-off. 
Indeed, if $x = \limo x_n$ and $x' = \limo x'_n$ are two points of $B\left(x^0, \frac{r_0}9\right)$, we can find for each $n$ a chain $C_n$ of $\dot X_n$ that approximates $\distX {x_n}{x'_n}$ with a given error. 
However it is difficult to give a sense to the $\omega$-limit of $C_n$ if the number of points of $C_n$ is not uniformly bounded.
That is why we previously proved Proposition \ref{uniform approximation distance cone off}.

\begin{coro}
\label{local isometry cone off}
	The restriction of  $\dot \phi$ to the ball $B\left(x^0, \frac{r_0}9\right)$ is 1-Lipschitz
\end{coro}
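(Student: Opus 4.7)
The plan is to show, for each $\epsilon > 0$, that $|\dot\phi(x) - \dot\phi(x')|_{\dot X} \leq |x - x'|_{\limo \dot X_n} + \epsilon$, by transporting a near-optimal chain from $\dot X_n$ into $\dot X$ through the partial inverse $\dot\phi$ and then summing via the previous lemma. The subtle point, flagged in the paragraph preceding the corollary, is that the $\omega$-limit of a sequence of chains only makes sense when their cardinalities are uniformly bounded; this is exactly what Proposition \ref{uniform approximation distance cone off} is there to provide.

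Fix $x = \limo x_n$ and $x' = \limo x'_n$ in $B(x^0, r_0/9)$ and choose $\epsilon > 0$ strictly smaller than $r_0/9 - \max\{|x - x^0|_{\limo \dot X_n}, |x' - x^0|_{\limo \dot X_n}\}$. Since $|x_n - x'_n|_{\dot X_n}$ has $\omega$-limit less than $2 r_0/9$, the sequence is $\omega$-essentially bounded by some constant $A$. Proposition \ref{uniform approximation distance cone off} then yields, $\omega$-almost surely, a chain $C_n = (z_{1,n}, \dots, z_{m_n,n})$ in $\dot X_n$ from $x_n$ to $x'_n$ of length $l(C_n) \leq |x_n - x'_n|_{\dot X_n} + \epsilon$ and with $m_n \leq M/\sqrt\epsilon$. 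Because $m_n$ ranges over a finite set of integers, the non-principal ultrafilter $\omega$ selects a single value $m$ with $m_n = m$ $\omega$-almost surely (padding shorter chains by repetition if necessary).

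For each $j \in \{1, \dots, m\}$ the triangle inequality $|z_{j,n} - x^0_n|_{\dot X_n} \leq l(C_n) + |x_n - x^0_n|_{\dot X_n}$ passes to the $\omega$-limit and, by the choice of $\epsilon$, gives $|z_j - x^0|_{\limo \dot X_n} < r_0/3$, where $z_j := \limo z_{j,n}$. In particular each $z_j$ lies in the domain of $\dot\phi$, with $\dot\phi(z_1) = \dot\phi(x)$ and $\dot\phi(z_m) = \dot\phi(x')$. Each single-step distance $\|z_{j,n} - z_{j+1,n}\|$ is bounded by $l(C_n)$ and hence $\omega$-essentially bounded, so the previous lemma applies to each pair $(z_j, z_{j+1})$ and yields $\|\dot\phi(z_j) - \dot\phi(z_{j+1})\| \leq \limo \|z_{j,n} - z_{j+1,n}\|$. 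Thus $(\dot\phi(z_1), \dots, \dot\phi(z_m))$ is a chain in $\dot X$ from $\dot\phi(x)$ to $\dot\phi(x')$. Summing these $m-1$ inequalities and commuting the finite sum with the $\omega$-limit bounds its length by $\limo l(C_n) \leq |x - x'|_{\limo \dot X_n} + \epsilon$. The desired estimate follows by definition of the cone-off distance, and letting $\epsilon \to 0$ concludes.

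The single non-routine step is the opening one: without Proposition \ref{uniform approximation distance cone off}, the cardinality $m_n$ of an optimizing chain could grow with $n$, and there would be no way to attach to each chain position a well-defined point of the ultralimit. Once uniform boundedness is granted, everything else is triangle-inequality bookkeeping keeping the intermediate $z_j$'s within $B(x^0, r_0/3)$, and a step-by-step application of the previously established estimate on single-step distances under $\dot\phi$.
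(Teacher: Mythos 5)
Your proof is correct and follows essentially the same route as the paper's: Proposition \ref{uniform approximation distance cone off} supplies near-optimal chains with a uniformly bounded number of points, each position passes to the ultralimit inside $B\left(x^0, \frac{r_0}3\right)$, and the preceding lemma is applied step by step before summing and letting $\epsilon \to 0$. Your extra care in using the ultrafilter to fix a single cardinality $m$ (with padding) and your slightly different choice of $\epsilon$ are harmless refinements of what the paper does implicitly.
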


\begin{proof}
	Consider two points $x= \limo x_n$ and $x'= \limo x'_n$ of $B\left(x^0, \frac{r_0}9\right)$.
	Let $\epsilon >0$ such that $\distX x{x'} + \epsilon < \frac {r_0}9$. 
	By Proposition \ref{uniform approximation distance cone off}, there is a number $m$ depending only of $r_0$ such that for all $n \in \N$, there is a chain $C_n = (z^1_n, \dots, z^m_n)$ between $x_n$ and $x'_n$ with $l(C_n) \leq \dist {\dot X_n} {x_n}{x'_n} + \epsilon  < \frac {r_0}9 $.
	Notice that for all $1\leq j \leq m$, $\dist {\dot X_n}{x^0_n}{z^j_n} \leq \dist {\dot X_n}{x^0_n}{x_n} + l(C_n) < \frac {r_0}3$. 
	Thus we can consider the points $z^j = \limo z^j_n$.
	The previous lemma gives
	\begin{eqnarray*}
		\dist{\dot X}{\dot \phi(x)}{\dot \phi(x')} 
		& \leq & \sum_{j=1}^{m-1} \distSC {\dot \phi(z^j)}{\dot \phi(z^{j+1})}\\
		& \leq & \limo l(C_n) 
		\leq \limo \dist {\dot X_n}{x_n}{x'_n} + \epsilon
	\end{eqnarray*}
	Hence for all $\epsilon >0$, we have $\dist{\dot X}{\dot \phi(x)}{\dot \phi(x')}  \leq \distX x {x'} +\epsilon$. 
	Thus $\dot \phi$ is 1-Lipschitz.
\end{proof}

\begin{coro}
	The map $\dot \phi$ induces an isometry from the ball $B\left(x^0, \frac{r_0}9\right)$ onto its image.
\end{coro}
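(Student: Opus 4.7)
The strategy is to combine three ingredients already assembled in the preceding lemmas: (a) the global map $\dot\psi : \dot X \to \limo \dot X_n$ is 1-Lipschitz, (b) $\dot\psi$ and $\dot\phi$ are mutually inverse bijections between $B(x^0, r_0/3) \subset \limo \dot X_n$ and $B(\dot\phi(x^0), r_0/3) \subset \dot X$, and (c) the restriction of $\dot\phi$ to $B(x^0, r_0/9)$ is 1-Lipschitz, as just established in Corollary \ref{local isometry cone off}.

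Concretely, I would fix two points $x, x' \in B(x^0, r_0/9)$ and first invoke (c) to obtain
\begin{displaymath}
	\dist{\dot X}{\dot\phi(x)}{\dot\phi(x')} \leq \distX{x}{x'}.
\end{displaymath}
Because $\dot\phi$ is 1-Lipschitz on this ball and fixes $x^0 \mapsto \dot\phi(x^0)$, the images $\dot\phi(x)$ and $\dot\phi(x')$ land in $B(\dot\phi(x^0), r_0/9)$, which is well inside the region where (b) makes $\dot\psi$ an honest inverse of $\dot\phi$. Then from (a) I would derive the reverse inequality
\begin{displaymath}
	\distX{x}{x'} = \distX{\dot\psi(\dot\phi(x))}{\dot\psi(\dot\phi(x'))} \leq \dist{\dot X}{\dot\phi(x)}{\dot\phi(x')},
\end{displaymath}
and the two bounds combine to give the required isometry onto the image.

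There is no genuine obstacle here; the statement is a packaging lemma expressing the elementary fact that a 1-Lipschitz map with a 1-Lipschitz one-sided inverse is an isometry onto its image. The substantive work — uniform approximation of the cone-off distance by chains of bounded cardinality (Proposition \ref{uniform approximation distance cone off}) and the consequent passage to the $\omega$-limit — has already been carried out in the previous two lemmas.
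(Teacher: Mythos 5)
Your argument is exactly the paper's: $\dot\phi$ is a 1-Lipschitz bijection from $B\left(x^0, \frac{r_0}{9}\right)$ onto its image whose inverse $\dot\psi$ is also 1-Lipschitz, so it preserves distances. The extra remark that the images land inside the region where $\dot\psi$ genuinely inverts $\dot\phi$ is a small bookkeeping point the paper leaves implicit; otherwise the two proofs coincide.
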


\begin{proof}
	We already know that $\dot \phi$ is a 1-Lipschitz bijection from $B\left(x^0, \frac{r_0}9\right)$ onto its image. However its inverse function $\dot \psi$ is also 1-Lipschitz. Hence $\dot \phi$ preserves the distances.
\end{proof}

\paragraph{End of the proof of the theorem}
We have just proved that $B\left(x^0, \frac{r_0}9\right)$ is isometric to a subset of $\dot X(Y,r_0)$. 
Since $\dot X(Y,r_0)$ is $\ln 3$-hyperbolic, so is $B\left(x^0, \frac{r_0}9\right) = \limo B\left(x^0_n,\frac{r_0}9\right)$. 
Consequently there exists $n \in \N$ such that $B\left(x^0_n,\frac{r_0}9\right)$ is $(\ln3+ \epsilon)$-hyperbolic. 
Contradiction.
	
\end{proof}

\subsection{Length structure on the cone-off}

	\paragraph{} In order to apply the Cartan-Hadamard theorem, we need a length structure on the cone-off. 
	But the metric $|\ . \ |_{\dot X}$ is not necessary a length metric.
	We study here the difference between $|\ . \ |_{\dot X}$ and the length metric $d_{\dot X}$ induced by $|\ . \ |_{\dot X}$.
	We will see that $d_{\dot X}$ hardly changes the geometry of $\dot X$.
	Thus if $(\dot X, |\ . \ |_{\dot X})$ is hyperbolic, then so is ($\dot X,d_{\dot X})$.
	
	\paragraph{} From now on, $X$ is a geodesic, $\delta$-hyperbolic space, and $Y=(Y_i)_{i \in I}$ a collection of strongly quasi-convex subsets of $X$. 
	We recall that a strongly quasi-convex set $Y_i$ satisfies the following property: 
	for all $x, x' \in Y_i$ there exist $y, y' \in Y_i$ such that the path $\geo{x}{y} \cup \geo{y}{y'} \cup \geo{y'}{x'} \subset Y_i$ and $\distX{x}{y}, \distX{x'}{y'} \leq 10 \delta$. 
	In particular this condition is satisfied if $Y_i$ is a cylinder (see Proposition \ref{cylinder quasi-convex}). 
	$\dot X(Y)$ is the cone-off constructed as in Subsection \ref{part definition cone-off}. 
	
	\begin{lemm}
		Let $i \in I$. For all $x, x' \in C(Y_i)$, we have $\distLbis{\dot X}{x}{x'} \leq \dist{C(Y_i)}{x}{x'} + 40 \delta$.
	\end{lemm}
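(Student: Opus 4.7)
My plan is to exhibit, for any $x,x' \in C(Y_i)$, a continuous path in $\dot X$ from $x$ to $x'$ whose length is at most $\dist{C(Y_i)}{x}{x'} + 40\delta$. Since the length metric $\distLbis{\dot X}{\cdot}{\cdot}$ is by definition the infimum of path-lengths, this gives the bound. The key enabling fact is Lemma~\ref{some facts about distSC}: $|\cdot|_{\dot X}$ and $|\cdot|_{C(Y_i)}$ coincide on $C(Y_i)$, so any curve that stays inside the cone has the same length when measured with either metric. The obstruction forcing the $40\delta$-error is that $C(Y_i)$ need not be geodesic: by Proposition~\ref{geodesic cone BH}, a cone geodesic between $(y,r)$ and $(y',r')$ with $\theta(y,y')<\pi$ exists only if $y$ and $y'$ are joined by a geodesic lying \emph{inside} $Y_i$, and strong quasi-convexity supplies such a geodesic only up to a $10\delta$-error at each endpoint.

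Write $x=(y,r)$ and $x'=(y',r')$. Using the strong quasi-convexity of $Y_i$, pick $p,p'\in Y_i$ with $\dist{X}{y}{p}\leq 10\delta$ and $\dist{X}{y'}{p'}\leq 10\delta$ such that the three $X$-geodesic segments $\geo{y}{p}$, $\geo{p}{p'}$ and $\geo{p'}{y'}$ all lie in $Y_i$. Each of the pairs $(y,p)$, $(p,p')$ and $(p',y')$ is then joined by a genuine geodesic inside $Y_i$, so Proposition~\ref{geodesic cone BH} produces cone geodesics in $C(Y_i)$ from $(y,r)$ to $(p,r)$, from $(p,r)$ to $(p',r')$, and from $(p',r')$ to $(y',r')$. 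Concatenating them gives a continuous path $\Gamma$ in $C(Y_i)\subset\dot X$ from $x$ to $x'$ of cone-length
\begin{displaymath}
L(\Gamma)=\dist{C(Y_i)}{(y,r)}{(p,r)}+\dist{C(Y_i)}{(p,r)}{(p',r')}+\dist{C(Y_i)}{(p',r')}{(y',r')}.
\end{displaymath}

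To bound $L(\Gamma)$, note that the formula (\ref{distance cone}) is monotone non-decreasing in the common radial coordinate on $[0,r_0]$: rearranging, the inequality $\dist{C(Y_i)}{(a,r)}{(b,r)}\leq\dist{C(Y_i)}{(a,r_0)}{(b,r_0)}$ reduces to $(\sinh^2 r_0-\sinh^2 r)(1-\cos\theta)\geq 0$ using $\cosh^2 r_0-\cosh^2 r=\sinh^2 r_0-\sinh^2 r$. Hence $\dist{C(Y_i)}{(y,r)}{(p,r)}\leq\mu(\dist{X}{y}{p})\leq\dist{X}{y}{p}\leq 10\delta$ by Proposition~\ref{function mu}, and similarly $\dist{C(Y_i)}{(p',r')}{(y',r')}\leq 10\delta$. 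For the middle piece, the triangle inequality in $C(Y_i)$ combined with these two bounds gives $\dist{C(Y_i)}{(p,r)}{(p',r')}\leq\dist{C(Y_i)}{x}{x'}+20\delta$. Summing the three contributions yields $L(\Gamma)\leq\dist{C(Y_i)}{x}{x'}+40\delta$. The hard part is not any of these estimates individually but the bookkeeping that $L(\Gamma)$ really is the length of $\Gamma$ in $(\dot X, |\cdot|_{\dot X})$: because $\Gamma$ lies in $C(Y_i)$ and the two metrics agree there, the sup-defined length in $\dot X$ equals the cone-length, and we conclude $\distLbis{\dot X}{x}{x'}\leq L(\Gamma)\leq\dist{C(Y_i)}{x}{x'}+40\delta$.
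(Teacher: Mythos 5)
Your proof is correct and follows essentially the same route as the paper: use strong quasi-convexity to replace the endpoints by nearby points of $Y_i$ joined by geodesics lying inside $Y_i$, lift these to cone geodesics via Proposition~\ref{geodesic cone BH}, and concatenate, with the two $10\delta$ end-corrections and the $20\delta$ triangle-inequality correction for the middle piece accounting for the $40\delta$. The only difference is that you justify the bound $\dist{C(Y_i)}{(y,r)}{(p,r)}\leq 10\delta$ by checking monotonicity of the cone distance in the radial coordinate, a step the paper asserts without comment.
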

	
	\begin{proof}
		We denote by $x=(y,r)$ and $x'=(y',r')$ two points of the cone $C(Y_i)$. We have assumed that $Y_i$ was strongly quasi-convex, thus we can find two points $z$ and $z'$ in $Y_i$ such that the geodesics $\geo {y}{z}$, $\geo{z}{z'}$ and $\geo{z'}{y'}$ are contained in $Y_i$ and $\dist X{y}{z}, \dist X{y'}{z'}\leq 10 \delta$. 
		
		By Proposition \ref{geodesic cone BH}, we can find a geodesic $c_0$ (\resp $c$, $c'$) between $(z,r)$ and $(z',r')$ (\resp between $(y,r)$ and $(z,r)$,  between $(y',r')$ and $(z',r')$). 
		Since $\dist X{y}{z} ,\dist X{y'}{z'}\leq 10 \delta$, we have 
		\begin{displaymath}
		\dist{C(Y_i)}{(y,r)}{(z,r)}, \dist{C(Y_i)}{(y',r')}{(z',r')}  \leq 10 \delta
		\end{displaymath}
		It follows that $\dist{C(Y_i)}{(z,r)}{(z',r')}\leq \dist{C(Y_i)}{x}{x'} +20 \delta$. 
		Thus by composing the geodesics $c$, $c_0$ and $c'$, we obtain a path from $x$ to $x'$ whose length is no more than $\dist{C(Y_i)}{x}{x'} + 40 \delta$.
	\end{proof}
	
	\begin{coro}
	\label{comparison length metric norm}
		For all $x,x' \in \dot{X}$, we have $\distL x{x'} \leq \distSC x {x'}+ 40 \delta$.
	\end{coro}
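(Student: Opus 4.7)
The strategy is to convert a nearly-optimal chain into an honest rectifiable path. Fix $\epsilon > 0$; Lemma \ref{chain in X} furnishes a chain $C = (z_1, \dots, z_m)$ between $x$ and $x'$ with $l(C) \leq \distSC{x}{x'} + \epsilon$ and all interior vertices in $X$. I would replace each discrete step $(z_j, z_{j+1})$ by an explicit curve $\gamma_j \subset \dot X$ and concatenate the pieces, thereby exhibiting a path whose $\dot X$-length bounds $\distL{x}{x'}$ from above.

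Lemma \ref{some facts about distSC} partitions each step into two cases: either $z_j, z_{j+1}$ both lie in a common cone $C(Y_i)$, or they both lie in $X$ without any common $Y_i$ containing them. In the second case I take $\gamma_j$ to be an $X$-geodesic, which already sits inside $X \subset \dot X$ and has $\dot X$-length at most $\dist{X}{z_j}{z_{j+1}} = \distSC{z_j}{z_{j+1}}$, introducing no overhead. In the first case I appeal to the preceding lemma: its proof uses the strong quasi-convexity of $Y_i$ to detour the projections of $z_j, z_{j+1}$ to neighbouring points of $Y_i$ through which a genuine cone geodesic of $C(Y_i)$ passes, producing $\gamma_j \subset C(Y_i)$ of length at most $\dist{C(Y_i)}{z_j}{z_{j+1}} + 40\delta = \distSC{z_j}{z_{j+1}} + 40\delta$.

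Concatenating the $\gamma_j$ gives a continuous path whose total length is bounded by $l(C) + 40\delta \cdot k$, where $k$ counts those segments falling into the first case. The main technical point, and the one I would have to be careful with, is keeping $k \leq 1$. Because the chain from Lemma \ref{chain in X} has all interior vertices in $X$, the only segments that are genuinely forced into the first case come from a cone-point endpoint $z_1$ or $z_m$; an intermediate pair $(z_j, z_{j+1})$ that happens to lie in a common $Y_i$ can instead be bridged by the $X$-geodesic, and the resulting loss $\dist{X}{z_j}{z_{j+1}} - \mu\bigl(\dist{X}{z_j}{z_{j+1}}\bigr)$ is controlled by Proposition \ref{function mu} (concavity of $\mu$ together with $\mu(t) \leq t$) and can be absorbed into the tolerance $\epsilon$ by refining the chain further. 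After this reduction only a single $40\delta$ overhead is paid, and letting $\epsilon \to 0$ gives the conclusion.
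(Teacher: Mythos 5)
There is a genuine gap, and it stems from a misreading of what the corollary asserts. The quantity $\distSC x{x'}$ on the right-hand side is the \emph{one-step} gluing pseudo-distance of Lemma \ref{some facts about distSC}, not the chain-infimum $\dist{\dot X}{x}{x'}$. By that lemma it is finite in exactly two situations: either $x,x'$ lie in a common cone $C(Y_i)$ (so $\distSC x{x'}=\dist{C(Y_i)}x{x'}$ and the preceding lemma gives $\distL x{x'}\leq \dist{C(Y_i)}x{x'}+40\delta$ directly), or $x,x'$ both lie in $X$ with no common $Y_i$ (so $\distSC x{x'}=\dist Xx{x'}$ and an $X$-geodesic, viewed as a path in $\dot X$, gives $\distL x{x'}\leq \dist Xx{x'}$ with no overhead); otherwise $\distSC x{x'}=+\infty$ and there is nothing to prove. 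No chain is needed at all: your detour through Lemma \ref{chain in X} replaces the statement to be proved by the strictly stronger claim $\distL x{x'}\leq \dist{\dot X}x{x'}+40\delta$, which is not what is asserted here and is not obtainable by your method.

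The concrete failure is the claim that the number $k$ of segments incurring the $40\delta$ detour can be kept at most $1$. First, both endpoints $x$ and $x'$ may lie in distinct cones, already forcing $k=2$. More seriously, an intermediate pair $z_j,z_{j+1}\in Y_i$ contributes $\distSC{z_j}{z_{j+1}}=\mu\bigl(\dist X{z_j}{z_{j+1}}\bigr)$ to $l(C)$, and the gap $\dist X{z_j}{z_{j+1}}-\mu\bigl(\dist X{z_j}{z_{j+1}}\bigr)$ is \emph{not} small: $\mu$ saturates at $2r_0$ while the $X$-distance is unbounded, and refining the chain does not shrink this loss because replacing the pair by sub-steps only changes which points are compared, not the fact that the chain is charged $\mu(t)$ while your $X$-geodesic has length $t$. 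So each such pair forces either its own $40\delta$ cone detour or an uncontrolled error; the per-segment overheads accumulate as $40\delta\cdot m$. This accumulation is precisely why the paper, when it does compare $d$ with $|\ .\ |_{\dot X}$ in Proposition \ref{comparison length metric metric}, must first bound the number of chain points uniformly via Proposition \ref{uniform approximation distance cone off} and even then only obtains an error of $\delta+40M(A)\sqrt\delta$, not $40\delta$. To repair your write-up, drop the chain entirely and argue by the three cases of Lemma \ref{some facts about distSC}.
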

	
	\begin{proof}
	Let $x$ and $x'$ be two points of $X$. We distinguish three cases.
	\begin{itemize}
		\item If there exists $i \in I$ such that $x,x' \in C(Y_i)$, then $\distSC x {x'} = \dist{C(Y_i)}x{x'}$. Thus the inequality is given by the previous lemma.
		\item If $x,x' \in X$, but there is no  $i \in I$ such that $x,x' \in C(Y_i)$, then $\distSC x {x'} = \dist Xx{x'}$. There is a geodesic of $X$ between $x$ and $x'$. It gives a path in $\dot X$, whose length is no more than $\dist Xx{x'}$. It follows that $\distL xy \leq \distSC x {x'}$.
		\item In all the other cases, $\distSC x {x'} = + \infty$. There is nothing to prove.
	\end{itemize}
	\end{proof}
	
\begin{prop}
	\label{comparison length metric metric}
	Let $A \geq 1$ and $\eta >0$.
	There exists a constant $\delta >0$ depending only on $A$ and $\eta$ with the following property.
	 Let $X$ be a geodesic, $\delta$-hyperbolic space, and $Y=(Y_i)_{i \in I}$ a collection of strongly quasi-convex subsets of $X$.
	 The identity map from $\left(\dot X,|\ .\ |\right)$ onto $\left(\dot X, d\right)$ induces a $(1,\eta)$-quasi-isometry on any ball of radius $A$.
\end{prop}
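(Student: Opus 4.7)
The plan is to combine Corollary~\ref{comparison length metric norm} (which controls $d$ in terms of the local pseudo\-metric $\|\cdot\|$ with an additive error of $40\delta$) with the uniform chain approximation of Proposition~\ref{uniform approximation distance cone off}. First, I would record the trivial direction: $|x-x'|_{\dot X} \leq d(x,x')$, because any continuous path from $x$ to $x'$ can be partitioned into pieces producing chains whose total $\|\cdot\|$-lengths approach the path's length from below, so $d(x,x') \geq |x-x'|_{\dot X}$. Hence the claim reduces to the upper bound $d(x,x') \leq |x-x'|_{\dot X} + \eta$ for $x,x'$ in a common ball of radius $A$.

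For such $x,x'$ one has $|x-x'|_{\dot X} \leq 2A$. Apply Proposition~\ref{uniform approximation distance cone off} with $\epsilon = \eta/2$ to obtain a chain $C = (z_1,\ldots,z_m)$ between $x$ and $x'$ with at most $m \leq M/\sqrt{\eta/2}$ points (where $M$ depends only on $A$) and satisfying $l(C) \leq |x-x'|_{\dot X} + \eta/2$. For each consecutive pair, Corollary~\ref{comparison length metric norm} applied to $(z_j,z_{j+1})$ gives $d(z_j, z_{j+1}) \leq \distSC{z_j}{z_{j+1}} + 40\delta$, so the triangle inequality for $d$ yields
\[
d(x,x') \leq \sum_{j=1}^{m-1} d(z_j,z_{j+1}) \leq l(C) + 40(m-1)\delta \leq |x-x'|_{\dot X} + \frac{\eta}{2} + 40(m-1)\delta.
\]

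It then suffices to choose $\delta \leq \eta / (80m)$ so that $40(m-1)\delta \leq \eta/2$, giving $d(x,x') \leq |x-x'|_{\dot X} + \eta$. Since the bound on $m$ depends only on $A$ and $\eta$ (via Proposition~\ref{uniform approximation distance cone off}), the resulting $\delta$ also depends only on $A$ and $\eta$, as required. The key subtlety is that Corollary~\ref{comparison length metric norm} introduces a fixed $40\delta$ error per chain segment, so for chains of unbounded length no uniform $\delta$ would suffice; the whole purpose of Proposition~\ref{uniform approximation distance cone off} is precisely to guarantee a chain of size controlled by $A$ and $\eta$ alone (independently of the ambient space $X$), which is what makes the aggregated error $40m\delta$ shrinkable by a suitable choice of $\delta$.
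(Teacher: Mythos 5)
Your proof is correct and follows essentially the same route as the paper: a uniform chain from Proposition~\ref{uniform approximation distance cone off} combined with the $40\delta$ per-link bound of Corollary~\ref{comparison length metric norm}, then shrinking $\delta$ so the aggregated error is at most $\eta$. The only immaterial difference is that you fix $\epsilon=\eta/2$ in the chain approximation so that $m$ depends only on $A$ and $\eta$, whereas the paper takes $\epsilon=\delta$ and bounds the total error by $\delta+40M(A)\sqrt{\delta}$; just note that Proposition~\ref{uniform approximation distance cone off} requires $\epsilon<1$, so take $\epsilon=\min\{\eta/2,1/2\}$ in case $\eta$ is large.
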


\begin{proof}
	Let $x$ and $x'$ be two points of $\dot X$ such that $\dist{\dot X}x{x'} \leq 2A$. Using Proposition \ref{uniform approximation distance cone off} there exists a constant $M(A)$, an integer $m\leq \frac{M(A)}{\sqrt \delta}$ and a chain $C=\left(z_1,\dots,z_m\right)$ between $x$ and $x'$ such that $l(C) \leq \dist{\dot X}x{x'} + \delta$. Thanks to Corollary \ref{comparison length metric norm} we have
	\begin{displaymath}
		\distL x{x'}  
		\leq \sum_{j=1}^{m-1} \distL {z_j}{z_{j+1}} 
		\leq  l(C) + 40m\delta
		\leq \dist{\dot X}x{x'} + \delta + 40M(A)\sqrt \delta
	\end{displaymath}
	Thus we have
	\begin{displaymath}
		\dist{\dot X}x{x'} \leq \distL x{x'} \leq \dist{\dot X}x{x'} + \delta + 40M(A)\sqrt \delta
	\end{displaymath}
	Consequently, if $\delta$ is small enough, the identity map from $\left(\dot X,|\ .\ |\right)$ onto $\left(\dot X, d\right)$ induces a $(1,\eta)$-quasi-isometry on any ball of radius $A$.
\end{proof}

\rem This last proposition combined with Proposition \ref{quasi-isometry hyperbolicity} tells us that if $\left(\dot X,|\ .\ |\right)$ is locally hyperbolic, then so is $\left(\dot X, d\right)$.

\section{Small cancellation theory}
\label{part small cancellation}

\subsection{Orbifold}

In this subsection, we introduce vocabulary concerning orbifolds. For more details about these objects see \cite[Part III $\mathcal{G}$]{BriHae99}

\subsubsection*{Definition and length structure}

\begin{defi}[Rigidity]
The action of a group $G$ on a topological space $X$ is rigid if it satisfies the following property : for all $g \in G$ if there is an open $U \subset X$ such that $\restriction gU = \id_U$ then $g=1$.
\end{defi}

\begin{defi}[Orbifold]
	Let $Q$ be a topological space. We say that $Q$ is an orbifold if there exists a collection $\left( U_i, \varphi_i\right)_{i\in I}$, where $U_i$ is a topological space and $\varphi_i$ a continuous map from $U_i$ into $Q$ satisfying the following properties.
	
	\begin{enumerate}
		\item $Q = \bigcup_{i\in I} \varphi_i(U_i)$.
		\item For all $y \in \varphi_i(U_i)$, for all $x \in \varphi_i^{-1}\left(\{y\} \right)$, there exists a finite, rigid group of homeomorphisms of $U_i$, $G_x$, fixing $x$, such that for all $g \in G_x$, $\varphi_i \circ g = \varphi_i$ and such that the restriction of $\varphi_i$ to a neighbourhood of $x$, $V_x$, induces a homeomorphism from $V_x/G_x$ onto its image.
		\item For all $x_i \in U_i$ and $x_j \in U_j$, such that $\varphi_i(x_i) = \varphi_j(x_j)$, there exists  a homeomorphism $ \theta_{j,i}$ from a neighbourhood of $x_i$ onto a neighbourhood of $x_j$ such that $\varphi_i = \varphi_j \circ \theta_{j,i}$.
		\item For all $i \in I$, $\varphi_i$ lifts paths and homotopies, i.e. if $c: [0,1] \rightarrow Q$ (\resp $H:[0,1]\times[0,1] \rightarrow Q$) is a continuous path (\resp a homotopy), there exists $0=t_0<\dots <t_p=1$ a subdivision of $[0,1]$ (\resp  $0=t_0<\dots < t_p=1$ and $0=u_0 < \dots <u_q=1$  subdivisions of $[0,1]$) such that $\restriction c {[t_r,t_{r+1}]}$ (\resp $\restriction H {[t_r,t_{r+1}]\times[u_s,u_{s+1}]}$) lifts in one of the $U_i$.
	\end{enumerate}
\end{defi}	
	\voc $(U_i, \varphi_i)$ is called a chart of $Q$. The set of charts is an atlas. The map $\theta_{j,i}$ is a transition map, and the group $G_x$ is an isotropy group.
	
	\begin{defi}[Length structure]
		The orbifold defined as above is endowed with a length structure if 
		\begin{enumerate}
			\item the spaces $U_i$ are endowed with a length structure,
			\item for all $x \in U_i$, the isotropy group $G_x$ is an isometry group for the length structure in $U_i$,
			\item the transition maps $\theta_{j,i}$ are isometries, with respect with the length structures in $U_i$ and $U_j$.
		\end{enumerate}
	\end{defi}
	
	In this case, we can measure the length of a path, by measuring the length of its lift.
	
	\begin{defi}[$\sigma$-useful length structure]
		Let $\sigma$ be a positive number. The length structure defined as above is said to be $\sigma$-useful if for all  $y \in Q$, there exists a chart $(U_i, \varphi_i)$ and a point $x \in \varphi_i^{-1}\left(\{y\} \right)$, such that
		\begin{enumerate}
			\item the restriction $\varphi_i : B(x,\sigma) \rightarrow B(y,\sigma)$ is onto,
			\item this restriction lifts the paths starting in $y$, whose lengths are less than $\frac \sigma 2$,
			\item this restriction lifts the homotopies $H : [0,1]\times[0,1] \rightarrow Q$ satisfying $H(0,0)=y$, and for all $t_0 \in [0,1]$ (\resp $u_0 \in [0,1])$ the length of the path $u\rightarrow H(t_0,u)$ (\resp $t \rightarrow H(t,u_0)$) is shorter than $\frac \sigma 2$.
		\end{enumerate}
		$\left(U_i, \varphi_i,x\right)$ is called a $\sigma$-useful chart.
	\end{defi}
	
	\begin{defi}[$\sigma$-locally $\delta$-hyperbolic length structure]
		Let $\sigma >0$ and $\delta >0$. The $\sigma$-useful length structure, defined as above, is said to be $\sigma$-locally $\delta$-hyperbolic if for all $y \in Q$, there exists a $\sigma$-useful chart $\left(U_i, \varphi_i,x\right)$ such that the ball $B(x, \sigma)$ is $\delta$-hyperbolic.
	\end{defi}

\subsubsection*{Topology of orbifolds}

	If $Q$ is an orbifold, we can define the $\mathcal{G}$-paths and the homotopy of two $\mathcal{G}$-paths. (cf. \cite{BriHae99} or \cite{DelGro08}). This leads to the definition of the fundamental group of the orbifold $Q$ denoted by  $\pi^\text{orb}_1 (Q)$. We may also define the notion of covering and universal covering of $Q$ in the sense of orbifolds. (cf. \cite{BriHae99})
	
\ex
	
	Let  $X$ be a geodesic space and $G$ a group whose action on $X$ is rigid and proper. We denote by $Q$ the quotient $X/G$, and by $q: X \rightarrow Q$ the canonical projection. $Q$ may be endowed with an orbifold structure with one chart $(X,q)$. Indeed, for all $x \in X$, the isotropy group $G_x$ is necessarily finite. Moreover $q$ induces a local isometry from  $X/G_x$ onto its image. If $X$ is simply connected, the map $q:X \rightarrow Q$ is also the universal cover of $Q$ and $G= \pi^\text{orb}_1(Q)$. Such an orbifold is said to be developable.

\subsubsection*{Cartan Hadamard Theorem}

\begin{theo}[{\cite[Th. 4.3.1]{DelGro08}}]
\label{cartan-hadamard-orbifold}
	Let $\delta >0$ and $\sigma > 10^5\delta$. Consider an orbifold $Q$ with a $\sigma$-locally $\delta$-hyperbolic length structure.
	Then $Q$ is developable and its universal cover $X$ is $200 \delta$-hyperbolic.
	Let $(U,\varphi,x)$ be  a $\sigma$-useful chart. 
	If $z$ is a preimage in $X$ of the point $y=\varphi(x)$, then the developing map $(U,x) \rightarrow (X,z)$ induces an isometry from $B\left (x,\frac \sigma {10} \right)$ onto its image.
\end{theo}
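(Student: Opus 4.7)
The plan is to build the universal cover $X$ as the space of homotopy classes of $\mathcal{G}$-paths based at a fixed point $y_0 \in Q$, equipped with the natural length metric inherited from the charts. The developing map will send a homotopy class $[c]$ to the endpoint of a canonical lift of $c$ obtained by stitching together lifts through individual charts; property (4) of the orbifold definition guarantees that such a subdivision exists, and $\sigma$-usefulness guarantees the lifts behave well on balls of definite radius.

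First I would establish the local isometry claim, which is also used later. Given a $\sigma$-useful chart $(U,\varphi,x)$ and a preimage $z$ of $y=\varphi(x)$ in $X$, the lifting conditions in the $\sigma$-useful definition ensure that any path in $Q$ starting at $y$ of length $<\sigma/2$ lifts uniquely through $\varphi$, and any homotopy of such paths also lifts. Two points $x_1,x_2 \in B(x,\sigma/10)$ are joined in $U$ by a path of length $<\sigma/2$; its image under $\varphi$ is a path in $Q$ that lifts to $X$ starting at $z$ to a path of the same length, so the developing map is $1$-Lipschitz. Conversely any sufficiently short path in $X$ pushes down to a path in $Q$ which, by the homotopy lifting, lifts back into $U$, so distances match. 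This also shows $X$ is connected by concatenating such lifts.

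Second, I would propagate the local $\delta$-hyperbolicity to global $200\delta$-hyperbolicity of $X$. The approach is to verify the four-point condition directly. Given $a,b,c,d \in X$, consider geodesic segments joining them; I would subdivide these segments into pieces of length roughly $\sigma/10$, each lying in a single $\sigma$-useful chart, and use the fact that each such chart is $\delta$-hyperbolic to chain together local Gromov-product inequalities
\begin{displaymath}
\gro{u}{w}{o} \geq \min\{\gro{u}{v}{o}, \gro{v}{w}{o}\} - \delta
\end{displaymath}
at each subdivision point. The errors accumulate multiplicatively with the number of subdivisions, so the key is that the subdivision can be arranged in a telescoping (dyadic) fashion so that only $O(\log)$ many stages are needed before the total error is controlled. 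Alternatively, one can show that a local $(1,\eta)$-quasi-geodesic at scale $\sigma$ is a global quasi-geodesic with controlled constants (Papasoglu-style), and then invoke stability together with the fact that small triangles in $X$ are $\delta$-thin in each chart.

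The developability itself is formal once the developing map is shown to be a local isometry: the deck group is the orbifold fundamental group $\pi^\text{orb}_1(Q)$, acting on $X$ as reparametrizations of $\mathcal{G}$-paths, and the quotient $X/\pi^\text{orb}_1(Q)$ recovers $Q$. The main obstacle is the quantitative hyperbolicity propagation: each chart only gives a $\delta$-hyperbolicity estimate on a ball of radius $\sigma$, and one needs to control how these estimates combine over an arbitrarily large quadrilateral in $X$. This is precisely where the hypothesis $\sigma > 10^5 \delta$ is used, providing enough room at the local scale to absorb the finitely many additive errors of size $O(\delta)$ arising at each subdivision step, so that the final constant stays bounded by $200\delta$.
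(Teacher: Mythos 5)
This theorem is not proved in the paper at all: it is quoted verbatim from Delzant--Gromov \cite[Th.\ 4.3.1]{DelGro08} and used as a black box, so there is no internal proof to compare your sketch against. Judging the sketch on its own terms, the outline of the first and third parts (constructing $X$ from homotopy classes of $\mathcal{G}$-paths, and deducing developability once the developing map is a local isometry) is the standard route, although even there the isometry claim on $B(x,\sigma/10)$ needs an injectivity argument -- two points of the chart mapping to the same point of $X$ produce a short $\mathcal{G}$-loop that must be shown null-homotopic via the homotopy-lifting clause -- which your sketch does not address.

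The genuine gap is in the local-to-global hyperbolicity step, which is the entire content of the theorem. Chaining the four-point inequality across subdivision points loses an additive $\delta$ per application, and a dyadic organization only reduces the number of stages to $O(\log(\operatorname{diam}))$ of the configuration; since the quadrilateral is arbitrary, this yields a bound of order $\delta\log(\operatorname{diam})$, which is unbounded and in particular not $200\delta$. No amount of ``room'' from $\sigma>10^5\delta$ fixes an error that grows with the size of the configuration -- the hypothesis is a ratio condition, independent of diameter. The actual proofs (Gromov, Delzant--Gromov, Coornaert--Delzant--Papadopoulos) run an induction on scales: one shows that if balls of radius $R$ are $\delta'$-hyperbolic with $\delta'\leq K\delta$, then balls of radius $2R$ are again $K\delta$-hyperbolic with the \emph{same} constant $K$, the point being that the error introduced at each doubling is reabsorbed rather than accumulated. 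The mechanism for this is the stability of $\sigma$-local geodesics as global quasi-geodesics \emph{at the current scale of the induction}, combined with tree approximation of finite configurations inside balls of the previous scale; your alternative suggestion gestures at this but, as stated, invokes quasi-geodesic stability in the ambient space, which presupposes the global hyperbolicity you are trying to prove. Without the non-accumulating induction on scales, the argument does not close.
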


\subsection{Statement of the very small cancellation theorem}
\label{Statement of the very small cancellation theorem}
	
	\nota If $G$ is a group acting on a space $X$, and $Y$ a subset of $X$, we denote by $\stab Y$ the subgroup of $G$ that preserves $Y$, i.e. 
	\begin{displaymath}
		\stab Y = \left\{ g \in G\ |\ gY = Y \right\}
	\end{displaymath}
	
	We define the notion of rotation family introduced by M. Gromov in \cite{Gro03}.
	\begin{defi}[Rotation family]
		Let $\left(H_i\right)_{i \in I}$ be a family of subgroups of $G$ and $(Y_i)_{i\in I}$ a collection of pairwise distinct subspaces of $X$. We say that $(Y_i, H_i)_{i\in I}$ is a rotation family if 
		\begin{enumerate}
			\item for all $i\in I$, $H_i$ is a finite index normal subgroup of $\stab {Y_i}$,
			\item there is an action of $G$ on $I$, which is compatible with the one on $X$, that is for all $g \in G$ and $i \in I$, we have $Y_{gi} = gY_i$ and $H_{gi} = gH_i g^{-1}$.
		\end{enumerate}
		
	\end{defi}
	
	\begin{theo}[Very small cancellation theorem]
	\label{very small cancellation}
		There exist two positive numbers $\delta_0$ and $\Delta_0$ satisfying the following property.
		
		Let  $X$ be a geodesic, simply connected, $\delta$-hyperbolic space and  $G$ a group acting properly, by isometries on $X$. Let $(Y_i,H_i)_{i \in I}$ be a rotation family, such that each $Y_i$ is strongly-quasi-convex.
		Let $ \rho = \min_{i \in I } \rinj{H_i}X$, $N$ be the normal subgroup of $G$ generated by the $H_i$'s and $\bar{G}$ the quotient group $G/N$. Assume also that 
		\begin{displaymath}
		\frac  \delta \rho \leq \delta_0 \ \text{ and } \ \frac{\Delta(Y)}\rho\leq \Delta_0
		\end{displaymath}
		
		Then, there exists a simply connected, hyperbolic, metric space $\bar X$ such that $\bar{G}$ acts properly by isometries on $\bar X$.
		
		Moreover if $G$ (\resp $H_i$) acts co-compactly on $X$ (\resp $Y_i$) and $I/G$ is finite, then $\bar X / \bar G$ is compact. In particular $\bar G$ is hyperbolic.
	\end{theo}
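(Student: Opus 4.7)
The plan is to realise $\bar{G}$ as the orbifold fundamental group of $\bar P := \dot X / G$, where $\dot X$ denotes the cone-off of $X$ relative to the family $(Y_i)_{i \in I}$ constructed in Section \ref{part cone-off}. Since $(Y_i,H_i)$ is a rotation family, the isometric $G$-action on $X$ extends to $\dot X$ by permuting the cones $C(Y_i)$, so the quotient $\bar P$ inherits a natural orbifold structure; at a vertex $v_i$ the isotropy collapses by $H_i$, so its orbifold fundamental group should be exactly $G/\ll H_i \gg = \bar G$. I would begin by fixing the cone parameter $r_0$ to be large and comparable to $\rho$, with $2\pi \sinh r_0 < \rho$ so that $\rinj{H_i}{Y_i} > 2\pi \sinh r_0$ for every $i$.

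Next I would describe $\bar P$ via two families of charts: (a)~for a point of $\bar P$ coming from $x \in \dot X$ at distance at least $r_0/2$ from every vertex, a chart modelled on a ball of $\dot X$ of radius $\sim r_0/9$ around $x$; and (b)~for a point lying in the image of a small neighbourhood of some vertex $v_i$, a chart modelled on $C(Y_i)/H_i$. Theorem \ref{second hyperbolicity theorem} provides $(\ln 3 + \epsilon)$-hyperbolicity for the charts of type (a) as soon as $\delta/\rho \leq \delta_0$ and $\Delta(Y)/\rho \leq \Delta_0$ are small enough, while Theorem \ref{first hyperbolicity theorem} yields the same conclusion for the charts of type (b). After replacing the chain pseudo-metric $\distSCV_{\dot X}$ by its induced length metric on each chart, using Proposition \ref{comparison length metric metric} together with Proposition \ref{quasi-isometry hyperbolicity} to preserve hyperbolicity, one obtains on $\bar P$ a $\sigma$-useful, $\sigma$-locally $\delta'$-hyperbolic length structure with $\sigma$ of order $r_0$ and $\delta'$ close to $\ln 3$; taking $r_0$ sufficiently large then ensures $\sigma > 10^5 \delta'$.

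Theorem \ref{cartan-hadamard-orbifold} will then imply that $\bar P$ is developable, with a simply connected, $200\delta'$-hyperbolic universal cover $\bar X$, on which $\bar G = \pi_1^{\textrm{orb}}(\bar P)$ acts properly by isometries—giving the first half of the statement. The co-compactness claim follows under the additional hypotheses because each $C(Y_i)/H_i$ is then compact, $X/G$ is compact, and there are only finitely many $G$-orbits of cones, so $\bar P$ and therefore $\bar X/\bar G$ are compact, making $\bar G$ hyperbolic. The main obstacle I expect is verifying the $\sigma$-useful condition at the vertex charts: one must show that a genuine neighbourhood of $v_i$ in $\bar P$ is faithfully modelled by a ball in $C(Y_i)/H_i$, with no extraneous identifications coming from $G$ or from translates $g Y_j$ falling near $Y_i$. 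This is precisely where the quantitative small cancellation condition $\Delta(Y)/\rho \leq \Delta_0$ must enter in its strong form, to guarantee that distinct cones are well separated at the scale $r_0$.
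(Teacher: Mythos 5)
Your proposal follows the paper's proof essentially step for step: extend the $G$-action to the cone-off $\dot X$, put an orbifold structure on $\dot X/G$ with one chart given by $\dot X$ minus the vertices and one chart $C(Y_i)/H_i$ per orbit of cones, verify local hyperbolicity of the charts via Theorems \ref{first hyperbolicity theorem} and \ref{second hyperbolicity theorem} together with Propositions \ref{comparison length metric metric} and \ref{quasi-isometry hyperbolicity}, apply the Cartan--Hadamard theorem \ref{cartan-hadamard-orbifold} to get a developable orbifold with hyperbolic universal cover $\bar X$, identify $\pi_1^{\text{orb}}$ with $\bar G$ by van Kampen for orbifolds, and conclude co-compactness exactly as you do.

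The one point where your normalisation would not deliver the statement as written is the choice of $r_0$. You let $r_0$ grow with $\rho$ (so that $2\pi\sinh r_0<\rho$), but the thresholds on $\delta$ and $\Delta(Y)$ supplied by Theorems \ref{first hyperbolicity theorem} and \ref{second hyperbolicity theorem} are absolute constants depending on $\epsilon$ and $r_0$; if $r_0$ varies with the input data there is no reason these thresholds take the form $\delta_0\rho$ and $\Delta_0\rho$ for \emph{universal} $\delta_0,\Delta_0$, which is what the theorem asserts. The paper instead fixes $r_0>10^6(\ln3+\epsilon)$ once and for all and rescales the space, working with $X_\rho=\frac{2\pi\sinh r_0}{\rho}X$, so that $\rinj{H_i}{X_\rho}\geq 2\pi\sinh r_0$ while the rescaled hyperbolicity constant and overlap become $\frac{2\pi\sinh r_0}{\rho}\delta$ and $\frac{2\pi\sinh r_0}{\rho}\Delta(Y)$; the hypotheses $\delta/\rho\leq\delta_0$ and $\Delta(Y)/\rho\leq\Delta_0$ then translate into absolute smallness, which is exactly what the hyperbolicity theorems require. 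With that substitution your argument is the paper's.
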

	
	\rem In this theorem, $\Delta(Y)$ and $\rho$ respectively play the role of the length of the largest piece and the length of the smallest relation in the usual small cancellation theory.
	
	It is important to notice that the constants $\delta_0$ and $\Delta_0$ are independent from the space $X$. This is useful in order to construct by iteration a sequence of hyperbolic groups, as it is done in \cite{Gro03}, \cite{DelGro08}  or \cite{ArzDel00}.

\subsection{Proof of the very small cancellation theorem}
\subsubsection*{Construction of an orbifold}

\paragraph{}First we have to fix several constants in order to apply the Cartan-Hadamard theorem (\ref{cartan-hadamard-orbifold}) and the two hyperbolicity theorems (\ref{first hyperbolicity theorem} and \ref{second hyperbolicity theorem}). 

\paragraph{} We consider a positive number $\epsilon$ and choose a radius $r_0$ such that $r_0 > 10^6\left( \ln3 + \epsilon\right)$. With such constants we can apply the Cartan-Hadamard theorem to a $\frac {r_0}{10}$-locally $(\ln3 + \epsilon)$-hyperbolic orbifold. By Proposition \ref{quasi-isometry hyperbolicity}, there exists a positive number $\eta$ with the following property. Consider two metric spaces $X$ and $X'$ and a $(1,\eta)$-quasi-isometry  $f:X \rightarrow X'$. If $X'$ is $ \left( \ln3 + \frac\epsilon 2\right)$-hyperbolic then $X$ is $\left( \ln3 + \epsilon\right)$-hyperbolic. From now on we will work with the rescaled metric space $X_\rho = \frac{2 \pi \sinh r_0}\rho X$. Thus for all $i \in I$, $\rinj{H_i}{X_\rho} \geq  2 \pi \sinh r_0$.

\paragraph{}We can find $\delta_0, \Delta_0 >0$ only depending on $r_0$ and $\epsilon$ such that if  $\frac  \delta \rho \leq \delta_0$ and $\frac{\Delta(Y)}\rho\leq \Delta_0$, then we have the following.

\begin{enumerate}
	\item Assume that $x_0$ is a point of $\dot X_\rho(Y)$, whose distance to a vertex is at least $\frac{r_0}2$.
	Then the ball $B\left(x_0, \frac{r_0}9\right)$ of  $\left(\dot X_\rho, |\ . \ |_{\dot X_\rho}\right)$ is  $ \left( \ln3 + \frac\epsilon 2\right)$-hyperbolic (see. Theorem \ref{second hyperbolicity theorem}).
	\item For all $i \in I$ the cone $C(Y_i)/H_i$ is $ \left( \ln3 + \frac\epsilon 2\right)$-hyperbolic. (see Theorem  \ref{first hyperbolicity theorem}).
	\item The identity map from $\left(\dot X_\rho, d\right)$ onto $\left(\dot X_\rho, |\ . \ |\right)$ restricted to any ball of radius $r_0$ is a $(1,\eta)$-quasi-isometry. (see Proposition \ref{comparison length metric metric}).
\end{enumerate}

Thus if $x_0$ is a point of $\dot X_\rho(Y)$, whose distance to a vertex is at least $\frac{r_0}2$, the ball $B\left(x_0, \frac{r_0}{10}\right)$ of $\left(\dot X_\rho, d\right)$ is $\left( \ln3 + \epsilon\right)$-hyperbolic and for all $i \in I$, the cones $C(Y_i)/H_i$ with the length metric are $\left( \ln3 + \epsilon\right)$-hyperbolic.

\begin{lemm}
	The action of $G$ on $X_\rho$ extends to an action by isometries of $G$ on $\dot X_\rho$.
\end{lemm}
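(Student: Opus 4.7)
The plan is to define the extension explicitly and then verify in turn that it is well defined, that it is a group action, and that each element acts by isometries. For $g \in G$, set $g \cdot x = gx$ on the subset $X_\rho \subset \dot X_\rho$ (using the given action), and for a point $(y,r)$ in a cone $C(Y_i)$, set
\begin{displaymath}
g \cdot (y,r) = (gy, r) \in C(Y_{gi}).
\end{displaymath}
This makes sense because the rotation family structure gives $Y_{gi} = gY_i$, so $gy$ really does lie in $Y_{gi}$.

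First I would check that these two definitions agree on the gluing locus. A point $\iota_i(y) = (y, r_0) \in C(Y_i)$ with $y \in Y_i$ is identified with $y \in X_\rho$ in $\dot X_\rho$. Under the proposed action, $g \cdot (y,r_0) = (gy,r_0) = \iota_{gi}(gy)$, which in $\dot X_\rho$ is identified with $gy \in X_\rho$, matching $g \cdot y$ computed from the $X_\rho$-action. Similarly, the cone vertex $v_i$ is sent to $v_{gi}$ consistently. Associativity $g \cdot (h \cdot z) = (gh) \cdot z$ on a cone point reads $(g(hy), r) = ((gh)y, r)$, which follows from the $G$-action on $X$; on $X_\rho$ it is immediate.

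Next I would verify that each $g$ acts by isometries. The restriction $g : Y_i \to Y_{gi}$ is an isometry (since $g$ acts isometrically on $X$ and preserves the $Y$'s setwise), so formula (\ref{distance cone}) for the cone metric, which depends only on the distance between base points and the two radial coordinates, shows that $g : C(Y_i) \to C(Y_{gi})$ is an isometry for the intrinsic cone metrics. Combined with the isometric action on $X_\rho$, the pseudo-distance $\distSC{\cdot}{\cdot}$ on $X_\rho \sqcup \bigsqcup_i C(Y_i)$ from Lemma \ref{some facts about distSC} is preserved by $g$ case by case (the three cases in that lemma are permuted according to whether the two points lie in a common cone, in $X_\rho$ only, or neither). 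Because $g$ sends chains to chains of the same length, the infimum definition of $|\ .\ |_{\dot X_\rho}$ is preserved as well.

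The only delicate point is making sure the bookkeeping of indices in the rotation family is consistent (that $H_{gi} = gH_ig^{-1}$ is compatible with how the cones are glued and that distinct pairs $(Y_i, H_i)$ stay distinct after translation by $g$); this is exactly what the compatibility condition in the definition of a rotation family guarantees, so no new estimate is needed here. Once all three items are verified, $g \mapsto (g \cdot)$ is a homomorphism from $G$ into the isometry group of $(\dot X_\rho, |\ .\ |_{\dot X_\rho})$, proving the lemma.
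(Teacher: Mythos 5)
Your proposal is correct and follows essentially the same route as the paper: define $g\cdot(y,r)=(gy,r)\in C(Y_{gi})$ using the compatibility condition of the rotation family, check that $\distSC{gx}{gx'}=\distSC{x}{x'}$ case by case, and conclude that the chain-infimum metric is preserved. The paper's own proof is just a terser version of this verification.
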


\begin{proof}
	We define this action such that its restriction to $X_\rho$ is the action of $G$ on $X$. Let $i \in I$. Consider a point $x=(y,r)$ of $C(Y_i)$ and an element  $g$ of $G$. We define $g.x$ as the point $(gy,r)$ of $C(Y_{g.i})$. We check that for all $x,x' \in \dot X_\rho$ and for all $g \in G$ we have $\distSC {gx}{gx'} = \distSC x {x'}$. It follows that the action of $G$ preserves the distances $| \ . \ |_{\dot X_\rho}$ and $d$.
\end{proof}

From now on, we denote by $Q$ the quotient space $\dot X_\rho(Y)/G$ endowed with the quotient topology. The canonical projection $\dot X_\rho \rightarrow Q$ is denoted by $q$. Then we define two kind of charts.
\begin{itemize}
		\item The first one is $(U,q)$ where $U$ is the cone-off $\dot X_\rho(Y)$ from which we have removed the vertices.
		\item Let  $i \in I$, we define $U_i = \left(C(Y_i) \setminus \iota_i (Y_i)\right)/H_i$. The composition $C(Y_i) \rightarrow \dot X_\rho \rightarrow Q$ induces an map $q_i : U_i \rightarrow Q$. The second type of charts is $(U_i, q_i)$.
	\end{itemize}

\begin{lemm}
	The charts defined previously endow $Q$ with a structure of orbifold.
\end{lemm}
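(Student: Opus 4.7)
The plan is to verify the four axioms of the orbifold definition for the atlas consisting of the global chart $(U, q)$, where $U = \dot X_\rho \setminus \{v_i : i \in I\}$, together with the local charts $(U_i, q_i)$ at each vertex. For the covering axiom, every point of $Q$ is the $q$-image of either a vertex $v_i$ or of a non-vertex point of $\dot X_\rho$: in the first case it lies in $q_i(U_i)$, since $U_i$ was defined by removing only $\iota_i(Y_i)$ and therefore retains the vertex; in the second case it lies in $q(U)$.

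For the local quotient structure of $(U, q)$ at a point $x \in U$, I would take the isotropy group to be the stabilizer $G_x \subset G$, acting on $U$ as a group of homeomorphisms. It is finite because the $G$-action on $X_\rho$ is proper and extends to a proper action on $U \subset \dot X_\rho$ (vertices excluded); it is rigid because $G$ acts by isometries on a geodesic metric space, so no non-identity element can pointwise fix an open subset. The restriction of $q$ to a small ball $V_x$ around $x$ then identifies $V_x/G_x$ with its image by the standard argument for proper isometric actions. For $(U_i, q_i)$, since $H_i$ is normal of finite index in $\stab{Y_i}$, the finite quotient $\Gamma_i = \stab{Y_i}/H_i$ acts on $U_i$ by homeomorphisms, and $q_i$ factors locally through $U_i/\Gamma_i \hookrightarrow Q$; at a point $[x] \in U_i$ the isotropy is the stabilizer of $[x]$ in $\Gamma_i$, again finite and rigid because it comes from an isometric action on the cone $C(Y_i)$.

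For transition maps, if $q_i(x_i) = q(x)$ with $x_i \in U_i$ and $x \in U$, I would lift $x_i$ to some $\tilde x_i \in C(Y_i) \setminus \iota_i(Y_i)$ and find $g \in G$ with $g \tilde x_i = x$; translation by $g^{-1}$ gives a local homeomorphism between neighbourhoods of $x$ in $U$ and of $\tilde x_i$ in $C(Y_i) \setminus \iota_i(Y_i)$, whose composition with the $H_i$-quotient yields the required $\theta$. Transitions among the $(U_i, q_i)$ are handled analogously using the $G$-action permuting the cones. For path and homotopy lifting, the family $\{q(U)\} \cup \{q_i(U_i)\}_{i \in I}$ is an open cover of $Q$, so compactness of $[0,1]$ (respectively $[0,1]^2$) combined with Lebesgue's lemma produces a subdivision along which each subpath (respectively sub-rectangle) is contained in the image of a single chart and lifts into that chart.

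The most delicate step is the verification at the vertex images $\bar v_i = q(v_i)$: one must check that the full orbifold isotropy is already captured by the $\Gamma_i$-action on $U_i$, which relies on the fact that any element of $G$ fixing $v_i$ preserves the cone $C(Y_i)$ and hence lies in $\stab{Y_i}$, together with the fact that distinct cones meet only along their shared bases. A secondary point is that the arguments implicitly use rigidity of the extended $G$-action on $\dot X_\rho$, which one deduces from rigidity on $X_\rho$ and on each cone separately, since any open set in $\dot X_\rho$ meets either $X_\rho$ or the interior of some cone.
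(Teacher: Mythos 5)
Your verification follows the same route as the paper, whose own proof consists of asserting exactly the two facts you develop at length: that $G$ acts properly on $U=\dot X_\rho\setminus\{v_i,\ i\in I\}$ (whence finite isotropy for the chart $(U,q)$), and that the stabilizer of the vertex of $U_i$ is the finite group $\stab{Y_i}/H_i$ (your ``most delicate step''); the rest is the routine check you carry out. The one justification I would not accept as written is the rigidity argument: a non-identity isometry of a geodesic space can perfectly well fix an open set pointwise (an isometry of a tripod exchanging two legs restricts to the identity on the third leg minus the branch point, which is open), so rigidity of the isotropy groups does not follow from the action being isometric and must be checked directly on the charts at hand --- a point the paper itself passes over in silence.
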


\begin{proof}
	The action of $G$ on $U$ is proper. Moreover the stabilizer of the vertex $v_i$ of the cone $U_i$ is exactly the finite group $\stab {Y_i} / H_i$.  We check that the atlas $(U,q)$, $(U_i,q_i)$ defines a structure of orbifold on $Q$.
\end{proof}

\subsubsection*{Properties of the orbifold $Q$}

\begin{lemm}
	The structure of orbifold on $Q$ defined as above is $\frac{r_0}{10}$-locally $(\ln3 + \epsilon)$-hyperbolic.
\end{lemm}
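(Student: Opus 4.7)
The plan is, for each point $y \in Q$, to exhibit an $r_0/10$-useful chart in which the length-metric ball of radius $r_0/10$ about a distinguished lift of $y$ is $(\ln 3 + \epsilon)$-hyperbolic. The constants $r_0, \epsilon, \delta_0, \Delta_0$ have been tuned at the start of this subsection precisely so that the two hyperbolicity inputs --- Theorem \ref{second hyperbolicity theorem} for the cone-off $(\dot X_\rho, d)$ and Theorem \ref{first hyperbolicity theorem} for the individual quotients $C(Y_i)/H_i$ --- each deliver a $(\ln 3 + \epsilon)$-hyperbolic ball of radius $r_0/10$ in the appropriate metric, so the real task is to pick the right chart at each $y$. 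The strategy is therefore to dichotomise: either $y$ admits a lift $\tilde x \in \dot X_\rho$ whose distance to every cone-vertex $v_j$ is at least $r_0/2$, or every lift of $y$ lies within $r_0/2$ of some vertex.

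In the first case, I would use the chart $(U, q)$ based at such a $\tilde x$. Since the vertices removed from $U$ are all at distance greater than $r_0/10$ from $\tilde x$, the length metric on $U$ and the length metric $d$ on $\dot X_\rho$ coincide on the ball $B(\tilde x, r_0/10)$, and its hyperbolicity is exactly the content of the first item of the setup. In the second case, I would choose a lift of the form $\tilde x = (y_i, r) \in C(Y_i) \setminus \iota_i(Y_i)$ with $r < r_0/2$ (possibly $r=0$), and use the chart $(U_i, q_i)$ based at the image of $\tilde x$ in $U_i$. Lemma \ref{distance coincide on the cone} applies because $r_0/10 < (r_0 - r)/4$, so the length metric on $U_i$ inherited from the cone $C(Y_i)/H_i$ agrees with the length metric $d$ on the ball of radius $r_0/10$; hyperbolicity then follows from the second item of the setup applied to $C(Y_i)/H_i$. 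In both cases, the $r_0/10$-usefulness axioms --- surjectivity of the chart onto the corresponding ball in $Q$, and lifting of paths and homotopies of length at most $r_0/20$ --- follow from the properness of the $G$-action (respectively the $H_i$-action) combined with the fact that such short lifts starting at $\tilde x$ cannot escape $U$ (respectively $U_i$), which is guaranteed by the very distance estimates that placed $\tilde x$ in its chart.

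The main obstacle is not the hyperbolicity conclusion itself, which is already packaged by the previous two sections, but the careful bookkeeping around the $\sigma$-usefulness axioms and the compatibility between the two a priori different length metrics on $U_i$ (one coming from the cone, one inherited through $\dot X_\rho$); reconciling them on the relevant scale is exactly what Lemma \ref{distance coincide on the cone} provides, while the transfer between $|\cdot|_{\dot X_\rho}$ and $d$ on the other chart is what the $(1,\eta)$-quasi-isometry item of the setup is for.
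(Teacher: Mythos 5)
Your proposal is correct and follows essentially the same route as the paper, whose own proof is a one-sentence assertion that the claim follows from the choice of $\delta_0,\Delta_0$ and the two hyperbolicity theorems; you simply make explicit the chart-selection dichotomy (lift far from all vertices $\Rightarrow$ chart $U$ via Theorem \ref{second hyperbolicity theorem} and the $(1,\eta)$-quasi-isometry, lift near a vertex $\Rightarrow$ chart $U_i$ via Theorem \ref{first hyperbolicity theorem} and Lemma \ref{distance coincide on the cone}) that the paper leaves implicit.
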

\begin{proof}
It is a consequence of the two hyperbolicity theorems: the constants $\delta_0$ and $\Delta_0$ have been chosen in such a way, that the structure of orbifold is $\frac{r_0}{10}$-locally $(\ln3 + \epsilon)$-hyperbolic.
\end{proof}

\begin{coro}
	The orbifold $Q$ is developable and its universal cover $\bar X$ is $\bar \delta$-hyperbolic, with $\bar \delta = 200(\ln3 + \epsilon)$.
\end{coro}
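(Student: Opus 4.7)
The plan is to invoke the orbifold Cartan--Hadamard theorem (Theorem \ref{cartan-hadamard-orbifold}) directly on $Q$. That theorem requires a $\sigma$-locally $\delta$-hyperbolic length structure satisfying $\sigma > 10^5 \delta$, and it outputs two conclusions at once: developability of the orbifold, and the bound $200\delta$ on the hyperbolicity of the universal cover. So the corollary is really a verification-of-hypotheses statement.

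The single numerical check I would make is that, with $\sigma = r_0/10$ and $\delta = \ln 3 + \epsilon$, one has $\sigma > 10^5 \delta$. This is equivalent to $r_0 > 10^6 (\ln 3 + \epsilon)$, which is precisely the constraint on $r_0$ imposed at the start of the proof of Theorem \ref{very small cancellation} (in the paragraph fixing the constants). The preceding lemma already packages the local hyperbolicity of $Q$ as a $\frac{r_0}{10}$-locally $(\ln 3 + \epsilon)$-hyperbolic structure, so the hypothesis of Theorem \ref{cartan-hadamard-orbifold} is satisfied verbatim. Applying that theorem then yields in one step that $Q$ is developable and that its universal cover $\bar X$ is $200(\ln 3 + \epsilon)$-hyperbolic, i.e. $\bar\delta$-hyperbolic with $\bar\delta = 200(\ln 3 + \epsilon)$.

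There is no real obstacle in this corollary, because all the substantive work has already been carried out earlier in the excerpt: the two hyperbolicity theorems (Th. \ref{first hyperbolicity theorem} and Th. \ref{second hyperbolicity theorem}) provide local $(\ln 3 + \tfrac{\epsilon}{2})$-hyperbolicity of the charts $U_i = C(Y_i)/H_i$ and of the balls in $(\dot X_\rho, |\ .\ |)$ that avoid the vertices; Proposition \ref{comparison length metric metric} upgrades this from the chord metric to the length metric $d$ by absorbing the quasi-isometry constant $\eta$ into the increment of the hyperbolicity constant; and the atlas constructed from $(U,q)$ together with the $(U_i,q_i)$ exhibits $Q$ as an orbifold whose length structure is $\frac{r_0}{10}$-useful. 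Thus the only new content in the corollary is the passage from "locally hyperbolic orbifold" to "developable orbifold with globally hyperbolic cover", which is exactly what Theorem \ref{cartan-hadamard-orbifold} accomplishes.
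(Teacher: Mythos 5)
Your proposal is correct and follows exactly the paper's own argument: the preceding lemma supplies the $\frac{r_0}{10}$-locally $(\ln 3+\epsilon)$-hyperbolic length structure, and Theorem \ref{cartan-hadamard-orbifold} applies since $r_0 > 10^6(\ln 3+\epsilon)$ gives $\sigma = \frac{r_0}{10} > 10^5(\ln 3+\epsilon)$. The paper's proof is the same one-step application of the Cartan--Hadamard theorem; you merely make the numerical verification explicit.
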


\begin{proof}
	This is an application of the Cartan-Hadamard theorem (see Theorem \ref{cartan-hadamard-orbifold}) to the orbifold $Q$ with his $\frac {r_0}{10}$-locally $(\ln3 + \epsilon)$-hyperbolic length structure.
\end{proof}

\begin{prop}
	The group $\bar G$ acts properly  by isometries on $\bar X$.
\end{prop}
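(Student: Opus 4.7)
The plan is to identify $\bar G$ with the orbifold fundamental group $\pi_1^{\text{orb}}(Q)$, invoke the developability of $Q$ granted by the Cartan--Hadamard theorem to realize $\bar G$ as a group of deck transformations of $\bar X$ acting by isometries, and finally deduce properness from the local structure of the two types of charts.

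First I compute $\pi_1^{\text{orb}}(Q)$. As a topological space, $\dot X_\rho$ is simply connected: it is obtained from the simply connected $X_\rho$ by attaching the contractible cones $C(Y_i)$ along the subsets $Y_i$. Consequently the fundamental group of the underlying space of $Q = \dot X_\rho/G$, viewed as a trivial orbifold, is $G$. The second type of charts $(U_i, q_i)$, with $U_i = C(Y_i)/H_i$, trivializes the isotropy contributed by $H_i$ at the vertex $v_i$: a $\mathcal{G}$-loop around $v_i$ corresponding to an element of $H_i$ becomes null-homotopic in the orbifold sense, because it already lifts to a loop in the chart $U_i$. An orbifold van Kampen argument, combining the two kinds of charts, shows that this imposes exactly the relations $H_i = 1$ for every $i \in I$; since $G$ acts on the index set $I$ compatibly with the family, all conjugates of the $H_i$ are killed as well, and we obtain $\pi_1^{\text{orb}}(Q) = G/N = \bar G$.

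Next, the developability of $Q$, already established in the preceding corollary, provides a universal orbifold cover $\bar X$ on which $\pi_1^{\text{orb}}(Q) = \bar G$ acts by deck transformations. The Cartan--Hadamard statement also asserts that on any ball of radius $r_0/10$, the developing map is an isometry onto its image in the corresponding $\frac{r_0}{10}$-useful chart of $Q$; in particular the deck transformations preserve the induced length metric on $\bar X$, so $\bar G$ acts by isometries.

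For properness, I fix $\bar x \in \bar X$ and a radius $r < r_0/20$. By the isometric lifting just recalled, $B(\bar x, r)$ maps isometrically to a ball in a chart $U$ or $U_i$ of $Q$. Any $g \in \bar G$ with $g B(\bar x, r) \cap B(\bar x, r) \neq \emptyset$ is then detected inside a single chart. Near a regular point, this reduces to the proper action of $G$ on $X_\rho$, which is granted by hypothesis. Near a cone vertex, the residual isotropy is $\stab{Y_i}/H_i$, which is finite since $H_i$ has finite index in $\stab{Y_i}$ by the definition of a rotation family. In either case only finitely many $g \in \bar G$ can contribute, which establishes properness. The main obstacle is the identification of $\pi_1^{\text{orb}}(Q)$ with $\bar G$: while morally clear (the cones are introduced precisely to kill the $H_i$'s), a clean justification requires an orbifold van Kampen theorem adapted to the $\mathcal{G}$-path setting, and one must verify that the relations introduced by the vertex charts are exactly the $H_i$'s and no more.
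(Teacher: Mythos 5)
Your proposal is correct and follows essentially the same route as the paper: identify $\pi_1^{\text{orb}}(Q)$ with $\bar G$ via the van Kampen theorem for orbifolds (the paper computes $\pi_1^{\text{orb}}(Q) = \stab{Y_i}/H_i \ast_{\stab{Y_i}} G = \bar G$ from the simply connected charts $U$ and $U_i$, citing Haefliger--Quach Ngoc Du), and then deduce the proper isometric action from developability. Your extra detail on properness via the local charts and the finiteness of $\stab{Y_i}/H_i$ is consistent with what the paper leaves implicit.
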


\begin{proof}
	We prove that $\bar G = \pi^\text{orb}_1(Q)$. The charts $U$ and $U_i$ are simply connected. 
	It follows $\pi^\text{orb}_1\left(U/G\right) = G $ and $\pi^\text{orb}_1\left(U_i/\left(\stab {Y_i}/H_i\right)\right) = \stab {Y_i}/H_i$. Applying to $Q$ the Van Kampen's theorem for orbifolds, we obtain $\pi^\text{orb}_1(Q) = \stab {Y_i}/H_i *_{\stab {Y_i}} G  = \bar G$ (see \cite{HaeDu84}). Thus $\bar G$ acts properly on $\bar X$.
\end{proof}

\begin{prop}
	 If $G$ (\resp $H_i$) acts co-compactly on $X$ (\resp $Y_i$) and $I/G$ is finite, then $\bar X / \bar G$ is compact. In particular $\bar G$ is hyperbolic.
\end{prop}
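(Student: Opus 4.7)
The strategy is to identify $\bar X/\bar G$ with the orbifold $Q$, cover $Q$ by finitely many compact pieces, and then invoke the Švarc--Milnor lemma to transfer hyperbolicity from $\bar X$ to $\bar G$.

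First I would observe that, by construction and the preceding propositions, $\bar X$ is the universal cover of the orbifold $Q = \dot X_\rho(Y)/G$ and $\bar G = \pi^{\mathrm{orb}}_1(Q)$ acts as the deck transformation group. Consequently the quotient $\bar X/\bar G$ is canonically identified with $Q$, and it suffices to prove that $Q$ is compact.

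To show that $Q$ is compact, I would use the decomposition $\dot X_\rho = X_\rho \cup \bigcup_{i \in I} C(Y_i)$ coming from the definition of the cone-off, and argue that the image $q(\dot X_\rho) = Q$ is a finite union of compact pieces. The image $q(X_\rho) = X_\rho/G$ is compact because $G$ acts cocompactly on $X$ (hence on the rescaled space $X_\rho$). Next, using the $G$-action on $I$, pick representatives $i_1,\dots,i_k$ of the finitely many orbits in $I/G$; every cone $C(Y_i)$ then projects via $q$ into one of the sets $q(C(Y_{i_j}))$, so $Q = q(X_\rho) \cup \bigcup_{j=1}^{k} q(C(Y_{i_j}))$. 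For each $j$, $q(C(Y_{i_j}))$ is the quotient $C(Y_{i_j})/\stab{Y_{i_j}}$. Since $H_{i_j}$ is a finite index subgroup of $\stab{Y_{i_j}}$ and acts cocompactly on $Y_{i_j}$, the group $\stab{Y_{i_j}}$ acts cocompactly on $Y_{i_j}$ as well; hence $Y_{i_j}/\stab{Y_{i_j}}$ is compact, and so is the cone $C(Y_{i_j})/\stab{Y_{i_j}} \cong C(Y_{i_j}/\stab{Y_{i_j}})$. Thus $Q$ is a finite union of compact sets, and therefore compact.

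Finally I would invoke the Švarc--Milnor lemma. The space $\bar X$ is a geodesic length space (as a universal cover of a length orbifold), proper (properness follows from the cocompact action of $\bar G$ together with the fact that $X$ and the $Y_i$ are proper), and $\bar \delta$-hyperbolic with $\bar \delta = 200(\ln 3 + \varepsilon)$ by the application of the Cartan--Hadamard theorem. Since $\bar G$ acts by isometries, properly, and with compact quotient $\bar X/\bar G = Q$ on this space, the Švarc--Milnor lemma yields that $\bar G$ is finitely generated and quasi-isometric to $\bar X$. As hyperbolicity is a quasi-isometry invariant for geodesic spaces, $\bar G$ is a hyperbolic group.

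The only step requiring a bit of care is the compactness of $C(Y_{i_j})/\stab{Y_{i_j}}$, where one must use precisely the finite index of $H_{i_j}$ in $\stab{Y_{i_j}}$ (which is part of the definition of a rotation family) to upgrade cocompactness of $H_{i_j}$ to that of $\stab{Y_{i_j}}$; the remainder of the argument is essentially a bookkeeping of covers by compact pieces.
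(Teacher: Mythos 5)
Your proof is correct and follows essentially the same route as the paper: the paper likewise observes that $Q=\bar X/\bar G$ is obtained by gluing finitely many compact cones (one per orbit of $I/G$, each compact because $\stab{Y_i}\supset H_i$ acts cocompactly on $Y_i$) onto the compact space $X/G$, and then deduces hyperbolicity of $\bar G$ from the proper cocompact action on the hyperbolic space $\bar X$. Your version merely spells out the covering by compact pieces and the \v{S}varc--Milnor step in more detail.
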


\begin{proof}
	Since there are, up to a translation by an element of $G$, a finite number of $Y_i$, $Q$ is obtained by gluing a finite number of compact cones, on the compact space $X/G$. Thus $Q = \bar X/\bar G$ is compact. Hence the action of $\bar G$ on $\bar X$ is proper, co-compact.
\end{proof}

\cmath{Theorem \ref{asphericity universal cover} and its proof have been rewritten.}
\begin{theo}
	\label{asphericity universal cover}
	Assume that $X$ is a $n$-dimensional simplicial complex with $8(n+1) \bar \delta \leq \frac {r_0}{100}$.
	We suppose that for all $x \in X$, for all $r \in \R^+$, there exists a homotopy $h : \bar B(x, r) \times [0,1] \rightarrow X$ contracting $\bar B(x,r)$ to $\{x\}$ such that for all $x' \in \bar B(x,r)$, $\distX x{h(x',t)} \leq \distX x{x'} + \frac {\bar \delta \rho}{2 \pi \sinh r_0}$.
	We also suppose that the $Y_i$'s have the same property.
	Then $\bar X$ is contractible.
\end{theo}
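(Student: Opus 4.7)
The strategy is to apply Theorem \ref{aspherical hyperbolic complex} to $\bar X$. The complex $\bar X$ inherits a natural simplicial structure from the coning-off construction applied to $X$, and its $\bar\delta$-hyperbolicity (with $\bar\delta = 200(\ln 3 + \epsilon)$) was already established via Cartan--Hadamard in the proof of Theorem \ref{very small cancellation}. What remains is to verify the local hypothesis of Theorem \ref{aspherical hyperbolic complex}: for every $x_0 \in \bar X$ and every $r \leq 8(n+1)\bar\delta$, the ball $B(x_0, r)$ must be contractible in $B(x_0, r+4\bar\delta)$.

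The plan is to realize these balls inside a single chart of the orbifold $Q = \dot X_\rho(Y)/G$ and then invoke the local contraction statements of Section \ref{section contracting balls of the cone-off}. First, rescale: passing from $X$ to $X_\rho = \tfrac{2\pi \sinh r_0}{\rho}\, X$ turns the condition H($l$) assumed in the theorem (with $l = \tfrac{\bar\delta\, \rho}{2\pi \sinh r_0}$) into the condition H($\bar\delta$) for $X_\rho$ and for each $Y_i$ viewed inside $X_\rho$. Next, the inequality $8(n+1)\bar\delta \leq \tfrac{r_0}{100}$ guarantees $r+4\bar\delta \leq \tfrac{r_0}{10}$, which is precisely the radius on which the developing map of Theorem \ref{cartan-hadamard-orbifold} realizes a chart isometrically onto its image in $\bar X$. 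Consequently $B(x_0, r+4\bar\delta)$ is isometric to a ball in either a cone-off chart (away from vertices) or a cone chart (around some vertex $v_i$).

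Now split into two cases depending on which chart covers a preimage of $x_0$. If the preimage sits at distance at least $r$ from every vertex inside the cone-off chart, Proposition \ref{contracting ball in the cone-off} contracts $\bar B(x_0, r)$ inside $B(x_0, r+3\bar\delta)$. Otherwise the preimage lies in a cone $C(Y_i)$, and Proposition \ref{contracting ball contained in a cone} (applied to condition H($\bar\delta$) on $Y_i$) contracts the ball inside $B(x_0, r+\bar\delta)$. In either event the homotopy takes place in $B(x_0, r+4\bar\delta)$, and transporting it through the isometric developing map yields the required contraction in $\bar X$. Theorem \ref{aspherical hyperbolic complex} then concludes.

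The main obstacle is organising the case analysis so that points of $\bar X$ lying near the boundary between ``cone'' and ``cone-off'' territory are handled by the cone chart (which gives the sharper estimate $r+\bar\delta$), while points far from every vertex are handled by the cone-off chart; one must also verify that the closed ball of radius $r+4\bar\delta$ stays comfortably within the $\tfrac{r_0}{10}$-radius isometric image provided by Cartan--Hadamard, which is exactly what the hypothesis $8(n+1)\bar\delta \leq \tfrac{r_0}{100}$ buys after rescaling.
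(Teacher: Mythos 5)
Your proposal is correct and follows essentially the same route as the paper: one verifies the local hypothesis of Theorem \ref{aspherical hyperbolic complex} by lifting each ball of radius $r \leq 8(n+1)\bar\delta$ through the Cartan--Hadamard developing map into a cone chart or the cone-off chart and contracting it there via Propositions \ref{contracting ball contained in a cone} and \ref{contracting ball in the cone-off}, using the rescaled condition H($\bar\delta$). The only cosmetic differences are that the paper first isolates the case where the ball contains a vertex (contracting radially within the ball itself) whereas you fold that into Proposition \ref{contracting ball contained in a cone}, and that the developing map is isometric on balls of radius $\frac{r_0}{100}$ (that is $\sigma/10$ with $\sigma = \frac{r_0}{10}$) rather than $\frac{r_0}{10}$.
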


\begin{proof}
Let $\bar x$ be a point of $\bar X$ and $r \in [0, 8(n+1) \bar \delta]$.
We denote by $\bar B$ the closed ball $\bar B(\bar x,r)$ of $\bar X$.
We distinguish two cases.
	\subparagraph{Case 1.} There exists $i \in I$ such that the vertex $\bar v_i$ of the cone $C(Y_i)/H_i$ belongs to $\bar B$.
	Then there is a homotopy $H : \bar B \times [0,1] \rightarrow \bar B$ which contracts $\bar B$ to $\{\bar v_i\}$.
	\subparagraph{Case 2.}
	The ball $\bar B$ does not contain a vertex $\bar v_i$.
	Thanks to the Cartan-Hadamard theorem, there exists a chart $(V,x)$ such that the developing map $(V, x) \rightarrow (\bar X, \bar x)$ induces an isometry from $B \left(x, \frac {r_0}{100}\right)$ onto its image.
	In particular the ball $\bar B$ lifts in one of the charts.
	Assume that this chart is $U_i = C(Y_i)/H_i$.
	Since  $\bar v_i$ does not belong to $\bar B$, the ball $\bar B$ lifts in fact in the cone $C(Y_i)$.
	By Proposition \ref{contracting ball contained in a cone}, $\bar B$ is contractible in $B(\bar x ,r +\bar \delta)$.
	In the other hand, if the chart $V$ is $U$, we apply  Proposition \ref {contracting ball in the cone-off}.
	Thus $\bar B$ is contractible in $B(\bar x, r+ 3 \bar \delta)$.

\paragraph{}Consequently, every ball $\bar B(\bar x,r)$ of $\bar X$,  with $r \leq 8(n +1)\bar \delta$, is contractible in $B(\bar x,r+4 \delta)$.
By Proposition \ref{aspherical hyperbolic complex}, $\bar X$ is contractible.
\end{proof}

\section{Examples of aspherical complexes}
\label{part examples}

\paragraph{}In this section we explain how to construct examples of rotation families satisfying the very small cancellation assumptions.
Let $X$ be a proper, geodesic, $\delta$-hyperbolic space and $Y$ a closed convex subset of $X$. Let $G$ be a group acting properly, co-compactly, by isometries on $X$, such that $\stab Y$ acts co-compactly on $Y$.

\paragraph{} We are interested in the rotation family $\left(g Y, gH g^{-1} \right)_{g \in G/\stab Y}$, where $H$ is a subgroup of $\stab Y$.
\paragraph{} In concrete situations, $\left\{ \diam \left( gY^{+ 20\delta} \cap Y^{+20 \delta} \right), g \in G\setminus \stab Y \right\}$ may not be bounded. 
Nevertheless, in many situation, this assumption can be achieved by replacing $G$ with a finite index subgroup of $G$. 
This uses, as explained in the next lemma, the profinite topology of groups.

\begin{lemmsec}
\label{subgroup and overlap}
	Assume that the subgroup $\stab Y$ is closed in $G$ for the profinite topology.
	Let $\Delta$ be a non-negative number.
	There exists a finite index subgroup $G'$ of $G$ containing $\stab Y$ such that for all $g$ belongs to $G'$, $g \in \stab Y$ if and only if $\diam \left(g Y^{+20 \delta} \cap Y^{+20 \delta}\right) > \Delta$.
\end{lemmsec}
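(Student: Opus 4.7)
The plan is to isolate those $g \in G$ for which $gY$ overlaps $Y$ significantly into finitely many double cosets of $\stab{Y}$ in $G$, and then to eliminate all of them simultaneously by passing to a finite-index subgroup, using the profinite closedness hypothesis on $\stab{Y}$. The forward direction of the stated equivalence is essentially free: when $g \in \stab{Y}$ we have $gY = Y$, so $gY^{+20\delta} \cap Y^{+20\delta} = Y^{+20\delta}$, which has diameter exceeding $\Delta$ since (in the setting of interest) $Y$ is unbounded. All the real work goes into the converse.

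The first main step will be to prove that the set
\[
E \;=\; \{\, g \in G : \diam(gY^{+20\delta} \cap Y^{+20\delta}) > \Delta \,\}
\]
meets only finitely many double cosets of $\stab{Y}$ in $G$. Fix a basepoint $y_0 \in Y$ and a compact fundamental domain $K \subseteq Y$ for the $\stab{Y}$-action, and set $D = \diam(K \cup \{y_0\})$. Given $g \in E$, pick a point $x$ in the intersection and points $y, z \in Y$ with $d(x, y), d(x, g z) \leq 20\delta$. Writing $y = h_1 k_1$ and $z = h_2 k_2$ with $h_i \in \stab{Y}$ and $k_i \in K$, two applications of the triangle inequality after translating by $h_1^{-1}$ yield
\[
d\bigl(y_0,\; h_1^{-1} g h_2 \cdot y_0\bigr) \;\leq\; 40\delta + 2D,
\]
an estimate independent of $g$. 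Because $G$ acts properly on $X$, the ball of this radius around $y_0$ contains only finitely many $G$-translates of $y_0$; hence $h_1^{-1} g h_2$ can take only finitely many values $g_1, \dots, g_n \in G$, and
\[
E \;\subseteq\; \bigcup_{k=1}^{n} \stab{Y} \cdot g_k \cdot \stab{Y}.
\]

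The second, algebraic step then uses the profinite closedness of $\stab{Y}$. Discard any $g_k$ that happens to lie in $\stab{Y}$; let $g_{k_1}, \dots, g_{k_m}$ denote the remaining representatives, all outside $\stab{Y}$. For each index $i$, profinite closedness provides a finite-index subgroup $G_i \leq G$ with $\stab{Y} \subseteq G_i$ and $g_{k_i} \notin G_i$. Set $G' = \bigcap_i G_i$; this is a finite-index subgroup of $G$ containing $\stab{Y}$ and excluding every $g_{k_i}$. If some $g \in G' \setminus \stab{Y}$ lay in $E$, we could write $g = h_1 g_{k_i} h_2^{-1}$ with $h_1, h_2 \in \stab{Y} \subseteq G'$, forcing $g_{k_i} \in G' \subseteq G_i$ and yielding a contradiction. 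Hence $G'$ satisfies the converse implication as required.

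The main obstacle I anticipate is the geometric reduction in Step 1: one must convert the "large overlap" condition on $g$ into a bounded-displacement statement for $h_1^{-1} g h_2$ near $y_0$, so that properness of the $G$-action produces only finitely many double cosets. Once that bookkeeping is in place, the profinite closedness hypothesis delivers the desired finite-index subgroup essentially by construction.
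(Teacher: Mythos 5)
Your proof is correct and follows essentially the same strategy as the paper's: both use properness of the action together with a compact fundamental domain for $\stab Y$ on $Y$ to confine the elements with large overlap to finitely many $\stab Y$-double cosets, and then use profinite closedness of $\stab Y$ to pass to a finite-index subgroup avoiding the finitely many representatives outside $\stab Y$. The only (cosmetic) differences are that the paper packages the finiteness as a single finite set $E$ and separates it with one normal subgroup $N$, taking $G'=\stab Y\cdot N$, whereas you intersect several subgroups of the form $\stab Y\cdot N_i$; and you also explicitly note the (trivial, unboundedness-of-$Y$) forward implication that the paper leaves implicit.
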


\begin{proof}
Let $K$ be a compact fundamental domain for the action of $\stab Y$ on $Y$. 
Since the action of $G$ is proper, the following set is finite.
\begin{displaymath}
	E= \left \{ g \in G / \diam \left ( g K ^{+ \Delta +30\delta } \cap K ^{+ \Delta +30 \delta} \right) > \Delta \right\}
\end{displaymath}
We assumed that $\stab Y$ was closed in $G$ for the profinite topology. In other words
\begin{displaymath}
	\stab Y = \bigcap_{\substack{N \triangleleft G \\ \left[ G:N\right] < + \infty}} \stab Y.N
\end{displaymath}
Hence there exists a finite index normal subgroup $N$ of $G$ such that $E \cap \stab Y . N  \subset \stab Y$. 
We denote by $G'$ the set $\stab Y . N$. 
It is a finite index subgroup of $G$ containing $\stab Y$. 
Consider now $g \in G'$, such that $\diam\left( gY^{+20 \delta} \cap Y^{+20 \delta}\right) > \Delta$. 
Since $K$ is a fundamental domain of $Y$,
there exist four points of $K^{+30\delta +\Delta}$, $x,x',y$ and $y'$ and two elements of $\stab Y$, $h$ and $h'$ with the following properties : 
$hx = gh'x'$, $hy = gh'y'$ and $\distX x y = \distX {x'} {y'} > \Delta$.
Thus $h^{-1}gh'$ belongs to $E\cap G'$. But $h$ and $h'$ both belong to $\stab Y$. Hence $g \in \stab Y$.
\end{proof}

In this context, the following result of N. Bergeron will be useful.
 
\begin{propsec}[{\cite[Lemme principal]{Ber00}}]
	\label{lemma bergeron}
	Let $\Lambda$ be an algebraic subgroup of $GL_n(\R)$ and $G$ a finitely generated subgroup of $GL_n(\R)$. Then $\Lambda \cap G$ is closed in $G$ for the profinite topology.
\end{propsec}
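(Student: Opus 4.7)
The plan is to prove the contrapositive form of the statement: given $g \in G \setminus \Lambda$, I would produce a finite index subgroup $N \leq G$ containing $\Lambda \cap G$ but not $g$. This is a classical spread-out-and-reduce-mod-$\mathfrak{m}$ argument in the spirit of Malcev's theorem on residual finiteness of finitely generated linear groups.

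First, I would exploit the algebraic nature of $\Lambda$. Let $I$ be the ideal of polynomial functions on $GL_n(\R)$ (polynomials in the matrix entries, localised at the determinant) that vanish identically on $\Lambda$. Since $g \notin \Lambda$, pick a polynomial $P \in I$ with $P(g) \neq 0$. Choose a finite generating set $g_1,\dots,g_s$ of $G$ and let $A \subset \R$ be the subring generated by all matrix entries of the $g_i^{\pm 1}$ together with the coefficients of $P$ (after clearing denominators arising from inverses of determinants). Then $A$ is a finitely generated $\mathbf{Z}$-algebra, $G \subset GL_n(A)$, and $0 \neq P(g) \in A$.

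Next I invoke the arithmetic form of Hilbert's Nullstellensatz: a finitely generated $\mathbf{Z}$-algebra is Jacobson, and each of its maximal ideals has a \emph{finite} residue field. In particular there exists a maximal ideal $\mathfrak{m} \subset A$ with $A/\mathfrak{m}$ finite and $P(g) \notin \mathfrak{m}$. Consider the reduction homomorphism $\pi : GL_n(A) \to GL_n(A/\mathfrak{m})$, and note that $\pi(G)$ is a finite group. Since every element of $\Lambda \cap G$ satisfies $P=0$, the image $\pi(\Lambda \cap G)$ satisfies $\bar P = 0$; on the other hand $\bar P(\pi(g)) = \overline{P(g)} \neq 0$, so $\pi(g) \notin \pi(\Lambda \cap G)$. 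Setting $N = \pi^{-1}(\pi(\Lambda \cap G)) \cap G$ therefore produces a finite index subgroup of $G$ that contains $\Lambda \cap G$ but excludes $g$, which is exactly the separating subgroup needed for profinite closedness.

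The main obstacle is the spreading-out step: one must simultaneously fit the coordinates of all elements of $G$ and the coefficients of the separating polynomial into a common finitely generated $\mathbf{Z}$-subring $A$ of $\R$ over which reduction modulo a maximal ideal with finite residue field makes sense. Conceptually this is the familiar fact that a finitely generated subgroup of $GL_n$ over a field of characteristic zero can be conjugated into $GL_n$ of a finitely generated integral domain, but some care is required to incorporate the inverse of the determinant and the coefficients of $P$ without enlarging $A$ beyond a finitely generated $\mathbf{Z}$-algebra. Once this is handled, the combination of the arithmetic Nullstellensatz with the algebraic defining equations of $\Lambda$ does the rest of the work.
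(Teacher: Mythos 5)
Your proof is correct, and it is essentially the argument of Bergeron's \emph{Lemme principal} that the paper cites without reproving: separate $g$ from the Zariski-closed $\Lambda$ by a polynomial, spread out into a finitely generated $\mathbf{Z}$-subalgebra of $\R$, and reduce modulo a maximal ideal with finite residue field avoiding $P(g)$. No gaps; the handling of the determinant denominators and of the coefficients of $P$ is exactly the care the standard argument requires.
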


The second lemma explains how to find a subgroup $H$ of $\stab Y$ with an injectivity radius as large as desired.

\begin{lemmsec}
\label{subgroup and injectivity}
	Assume that the group $G$ is residually finite. 
	Then for all $\rho > 0$ there exists a finite index normal subgroup $H$ of $\stab Y$, such that $\rinj H X \geq \rho$.
\end{lemmsec}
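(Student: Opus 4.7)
The plan is to exploit the cocompact action of $G$ on $X$ to show that non-trivial elements of $G$ with translation length $[g]_X<\rho$ fall into only finitely many $G$-conjugacy classes, and then to use residual finiteness to select a normal subgroup of $G$ missing a representative of each such class.

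To carry this out, I would first fix a compact fundamental domain $K$ for the action of $G$ on $X$ and consider
$$F = \left\{ g \in G\ \big|\ \exists y \in K,\ \distX{y}{gy} \leq \rho \right\}.$$
Because the action of $G$ is proper and $K$ is compact, $F$ is finite. The key observation is that every $g \in G$ with $[g]_X < \rho$ is $G$-conjugate to an element of $F$: if $\distX{x}{gx} < \rho$ for some $x \in X$, write $x = ky$ with $y \in K$, $k \in G$; then $k^{-1}gk$ moves $y \in K$ by less than $\rho$, so $k^{-1}gk \in F$.

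Next I would apply residual finiteness of $G$: for each non-trivial $f \in F$, pick a finite index normal subgroup $N_f \triangleleft G$ with $f \notin N_f$, and set $N = \bigcap_{f \in F \setminus \{1\}} N_f$. This $N$ is a finite index normal subgroup of $G$ that avoids every non-trivial element of $F$. Define $H = N \cap \stab Y$. Since $N$ has finite index in $G$, $H$ has finite index in $\stab Y$; and since $N$ is normal in the ambient group $G$ which contains $\stab Y$, the subgroup $H$ is normal in $\stab Y$.

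Finally, it remains to verify $\rinj H X \geq \rho$. If some non-trivial $h \in H$ satisfied $[h]_X < \rho$, then by the first step $h$ would be $G$-conjugate to some $f \in F \setminus \{1\}$; but $h$ lies in the normal subgroup $N$, so its entire $G$-conjugacy class is contained in $N$, forcing $f \in N$ and contradicting the choice of $N_f$. No step presents a serious obstacle once the setup is in place; the only slightly subtle point is the use of normality in the larger group $G$ (not merely in $\stab Y$) to transfer the avoidance of $F$ from $G$-conjugacy classes to $H$ itself.
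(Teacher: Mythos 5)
Your proof is correct and follows essentially the same route as the paper: both arguments fix a compact fundamental domain $K$, use properness to extract a finite set of ``short'' elements near $K$, apply residual finiteness to find a finite index normal subgroup $N \triangleleft G$ avoiding that set, and then conjugate an arbitrary point into $K$ (using normality of $N$ in all of $G$) to bound the translation length, finally setting $H = N \cap \stab Y$. The only cosmetic difference is that the paper phrases the finite bad set as $\left\{ g \in G \mid gK^{+\rho} \cap K^{+\rho} \neq \emptyset \right\}$ and runs the conjugation argument directly on translation lengths rather than via conjugacy classes.
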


\begin{proof}
	Let $K$ be a compact fundamental domain for the action of $G$ on $X$.
	Since the action of $G$ is proper, the following set is finite.
	\begin{displaymath}
		E = \left\{ g \in G / g K ^{+\rho} \cap K ^{+\rho} \neq \emptyset\right\}
	\end{displaymath}
	Moreover, $G$ is residually finite.
	Hence there exists a finite index normal subgroup $N$ of $G$ such that $E \cap N = \{1\}$.
	Consider $g \in N\setminus \{1\}$ and $x \in X$. 
	By definition there exists $h \in G$ such that $hx \in K$.
	But $h g h^{-1}$ belongs to $N \setminus \{1\}$, thus $hgh^{-1} K ^{+\rho} \cap K^{+\rho} = \emptyset$.
	It follows that
	\begin{math}
		\distX {x}{gx} = \distX{hx}{(hgh^{-1}) hx} \geq \rho
	\end{math}.
	Hence $[g] \geq \rho$.
	Consequently, we take for $H$ the group $N \cap \stab Y$.
\end{proof}

\begin{theosec}
	Let $\HP n$ denote the real (\resp complex, quaternionic) hyperbolic space, and $\delta$ its hyperbolicity constant. 
	We consider $\Lambda_k = SO(k,1)$ (\resp $SU(k,1)$, $Sp(k,1)$) as the stabilizer of $\HP k$ in $\HP n$.
	Let $G$ be a uniform lattice of $\Lambda_n=SO(n,1)$ (\resp $SU(n,1)$, $Sp(n,1)$). 
	We assume that $G \cap \Lambda_k$ is a uniform lattice of $\Lambda_k$. 
	\begin{enumerate}
		\item There exists a finite index subgroup $G'$ of $G$ such that the following set is bounded.
		\begin{displaymath}
			\left\{ \diam \right(g \HP k ^{+ 20 \delta} \cap \HP k^{+ 20 \delta}\left), g \in G'\setminus \Lambda_k\right\}
		\end{displaymath}
		\item Let $\bar Q$ be the space obtained by gluing a cone of base $\HP k / G' \cap \Lambda_k$ over $\HP n /G'$. 
		There is a finite index subgroup $H$ of $G' \cap \Lambda_k$ and a contractible hyperbolic space $\bar X$ such that $\bar G'  = G' / \ll H \gg$ acts properly co-compactly on $\bar X$, and $\bar Q = \bar X / \bar G'$.
	\end{enumerate}
\end{theosec}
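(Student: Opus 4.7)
In each case $G$ is a uniform lattice in a real semisimple Lie group, hence a finitely generated linear group, while $\Lambda_k$ is a real algebraic subgroup of $\Lambda_n$ and coincides with the setwise stabilizer $\stab{\HP k}$ of $\HP k$ in $G$. Proposition \ref{lemma bergeron} then shows that $G \cap \Lambda_k$ is closed in $G$ for the profinite topology. Since $\HP k \subset \HP n$ is totally geodesic in a CAT($-1$) space it is convex, hence strongly quasi-convex. Fixing any finite $\Delta > 0$, Lemma \ref{subgroup and overlap} produces a finite-index subgroup $G' \subseteq G$ containing $G \cap \Lambda_k$ for which $\diam(g\HP k^{+20\delta} \cap \HP k^{+20\delta}) \leq \Delta$ for every $g \in G' \setminus \Lambda_k$, which is Part 1.

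\textbf{Step 2 (setup for small cancellation).} Let $\delta_0, \Delta_0$ be the absolute constants of Theorem \ref{very small cancellation}, and pick $\rho$ with $\delta/\rho \leq \delta_0$ and $\Delta/\rho \leq \Delta_0$. The group $G'$ is finitely generated and linear, hence residually finite by Malcev's theorem, so Lemma \ref{subgroup and injectivity} produces a finite-index normal subgroup $H$ of $G' \cap \Lambda_k$ with $\rinj H{\HP n} \geq \rho$. Consider the rotation family $(gY, gHg^{-1})_{g \in G'/\stab Y}$ with $Y = \HP k$. Each $gY$ is convex in $\HP n$ (hence strongly quasi-convex); the ratios $\delta/\rho$ and $\Delta(Y)/\rho$ satisfy the small cancellation hypotheses by construction; $G'$ acts co-compactly on $\HP n$, $H$ acts co-compactly on $\HP k$; and $I/G'$ reduces to a point. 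Theorem \ref{very small cancellation} then yields a simply connected hyperbolic $\bar X$ carrying a proper co-compact action of $\bar G' = G'/\ll H \gg$, and the orbifold identification from Subsection 4.3 gives $\bar X/\bar G' = \bar Q$.

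\textbf{Step 3 (contractibility).} After passing (via Selberg's lemma) to a further torsion-free finite-index refinement if needed, equip $\HP n$ with a $G'$-equivariant smooth triangulation in which $\HP k$ is a subcomplex. In the CAT($-1$) spaces $\HP n$ and $\HP k$ every closed ball is geodesically convex, so contracting such a ball to its centre along radial geodesics is a homotopy satisfying condition H($0$), which is stronger than the condition required by Theorem \ref{asphericity universal cover}. That theorem then guarantees $\bar X$ is contractible, completing the proof.

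\textbf{Main obstacle.} The delicate point is the ordering of quantifiers in Step 2: $\Delta$ is produced by Step 1 with no reference to $\rho$, while $\Delta_0$ is the absolute constant coming out of Theorem \ref{very small cancellation}. The argument succeeds precisely because the small cancellation hypothesis constrains only the dimensionless ratios $\delta/\rho$ and $\Delta/\rho$, so once Bergeron's result makes $\Delta$ finite we can satisfy $\Delta/\rho \leq \Delta_0$ by enlarging $\rho$ through a deeper normal subgroup of $G' \cap \Lambda_k$. A secondary care is verifying $G \cap \Lambda_k = \stab{\HP k}$ in each of the three cases; this is a routine algebraic check depending on the specific pair of groups but must be carried out by hand.
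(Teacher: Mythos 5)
Your proposal follows essentially the same route as the paper: Bergeron's proposition plus Lemma \ref{subgroup and overlap} for Part 1, residual finiteness plus Lemma \ref{subgroup and injectivity} to get $H$ with injectivity radius $\rho$ large enough that $\delta/\rho \leq \delta_0$ and $\Delta/\rho \leq \Delta_0$, then Theorem \ref{very small cancellation} and Theorem \ref{asphericity universal cover}. Your Step 3 is slightly more careful than the paper's (which only remarks that balls in $\HP n$ and $\HP k$ are contractible), spelling out that radial contraction of convex balls gives condition H($0$); this is a welcome elaboration but not a different argument.
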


\begin{proof}  Applying Proposition \ref{lemma bergeron}, $G\cap \Lambda_k$ is closed in $G$ for the profinite topology. 
The first point follows from Lemma \ref{subgroup and overlap}.

\paragraph{}
We denote by $\Delta $ the upper bound of 
\begin{displaymath}
	\left\{ \diam \left( g\HP k^{+ 20\delta} \cap \HP k^{+20 \delta} \right) , g \in G'\setminus \Lambda_k \right\}
\end{displaymath}
It is known that a finitely generated subgroup of $\Lambda_n$ is residually finite.
Using Lemma \ref{subgroup and injectivity} there exists a finite index normal subgroup $H$ of $G' \cap \Lambda_k$, whose injectivity radius is $\rho$, and such that $\delta \leq \delta_0 \rho$ and $\Delta \leq \Delta_0 \rho$, 
where $\delta_0$ and $\Delta_0$ are the constant given by the very small cancellation theorem (see Theorem \ref{very small cancellation}).
It follows that the rotation family $\left( g \HP k , g H g^{-1}\right)_{g\in G'/G'\cap \Lambda_k}$ satisfies the hypotheses of the very small cancellation theorem.
Thus there exists a hyperbolic space $\bar X$ such that $\bar G' = G'/ \ll H \gg$ acts properly by isometries on $\bar X$ and  $\bar Q  = \bar X/ \bar G'$.
Since $G'$ (\resp $H$) is a uniform lattice of $\Lambda_n$ (\resp $\Lambda_k$) the action of $G'$ (\resp $H$) on $\HP n$ (\resp $\HP k$) is co-compact. It follows that the action of $\bar G'$ on $\bar X$ is co-compact.
Moreover, every ball in $\HP n$ or $\HP k$ is contractible. 
Thus by applying Theorem \ref{asphericity universal cover}, $\bar X$ is contractible.
\end{proof}

The next result is proved in the same way, it only uses another kind of convex subsets of $\HP n$.

\begin{theosec}
	Let $\HC n$ denotes the complex hyperbolic space, and $\delta$ its hyperbolicity constant. 
	We consider $SO(n,1)$ as the stabilizer of the $n$-dimensional real hyperbolic space $\HR n$ in $\HC n$.
	Let $G$ be a uniform lattice of $SU(n,1)$ such that $G\cap SO(n,1)$ is also a uniform lattice of $SO(n,1)$.
	
	\begin{enumerate}
		\item There exists a finite index subgroup $G'$ of $G$ such that the following set is bounded.
		\begin{displaymath}
			\left\{ \diam \right(g \HR n ^{+ 20 \delta} \cap \HR n^{+ 20 \delta}\left), g \in G'\setminus SO(n,1)\right\}
		\end{displaymath}
		\item We denote by $\bar Q$ the space obtained by attaching a cone of base $\HR n / G'\cap SO(n,1)$ over $\HP n /G'$.
		There is a finite index subgroup $H$ of $G'\cap SO(n,1)$ and a contractible hyperbolic space $\bar X$ such that $\bar G'  = {G' / \ll H \gg}$ acts properly co-compactly on $\bar X$, and $\bar Q = \bar X / \bar G'$.
	\end{enumerate}
\end{theosec}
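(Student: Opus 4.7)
The plan is to mimic the proof of the previous theorem, substituting the pair $(SU(n,1), SO(n,1))$ acting on $(\HC n, \HR n)$ for $(\Lambda_n, \Lambda_k)$ acting on $(\HP n, \HP k)$. The only differences are in checking the input hypotheses for the small cancellation machinery, since $\HR n$ is a totally real, totally geodesic subspace of $\HC n$ rather than a totally geodesic complex subspace. Everything else goes through verbatim once we verify (a) strong quasi-convexity of $\HR n$ in $\HC n$, (b) closure of $G \cap SO(n,1)$ for the profinite topology on $G$, and (c) residual finiteness of $G$. Let me call $Y = \HR n$ and $X = \HC n$; both are CAT($-1$) and in particular geodesic and $\delta$-hyperbolic for some universal $\delta$.

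For part (i), the key point is that $SO(n,1)$ is the real points of an algebraic subgroup of $GL_N(\R)$ (for a suitable faithful linear representation of $SU(n,1)$). By Bergeron's result (Proposition \ref{lemma bergeron}), $G \cap SO(n,1)$ is closed in $G$ for the profinite topology. Since $Y = \HR n$ is totally geodesic and convex in $X$, it is in particular strongly quasi-convex, and $\stab{Y} \supset G\cap SO(n,1)$ acts cocompactly on $Y$ by hypothesis. Lemma \ref{subgroup and overlap} applied with this data and any prescribed constant $\Delta$ then produces a finite-index subgroup $G' \leq G$ containing $\stab Y$ such that
\begin{displaymath}
\diam\left( g Y^{+20\delta} \cap Y^{+20\delta}\right) \leq \Delta \quad \text{for every } g \in G' \setminus \stab Y,
\end{displaymath}
which proves (i) (and gives us control over $\Delta$ for later use).

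For part (ii), I would fix the constants $\delta_0, \Delta_0$ given by the very small cancellation theorem (Theorem \ref{very small cancellation}). Lattices in semisimple linear groups are residually finite, so $G'$ is residually finite; Lemma \ref{subgroup and injectivity} then produces, for any prescribed $\rho$, a finite-index normal subgroup $H \triangleleft G' \cap SO(n,1)$ whose injectivity radius on $Y$ exceeds $\rho$. Choose $\rho$ so large that $\delta/\rho \leq \delta_0$ and $\Delta/\rho \leq \Delta_0$ (shrinking $G'$ first if needed to ensure $\Delta$ from step (i) is finite). The rotation family
\begin{displaymath}
\left(gY, gHg^{-1}\right)_{g \in G'/\stab{Y}}
\end{displaymath}
now satisfies all hypotheses of Theorem \ref{very small cancellation}, so the quotient $\bar G' = G'/\ll H \gg$ acts properly on a hyperbolic space $\bar X$; since $G'$ acts cocompactly on $\HC n$ and $H$ on $\HR n$, the action of $\bar G'$ on $\bar X$ is cocompact and $\bar Q \cong \bar X/\bar G'$ (the identification of $\bar Q$ with the orbifold quotient of the cone-off follows from the developability part of the Cartan–Hadamard theorem \ref{cartan-hadamard-orbifold}). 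Finally, every ball in $\HC n$ and in $\HR n$ is contractible (and satisfies condition H$(0)$ by geodesic contraction in a CAT($-1$) space), so after choosing $r_0$ large enough that $8(n+1)\bar\delta \leq r_0/100$, Theorem \ref{asphericity universal cover} applies and gives contractibility of $\bar X$.

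The main obstacle to watch for is ensuring that $\HR n$ is genuinely strongly quasi-convex, not merely quasi-convex, as required by Theorems \ref{first hyperbolicity theorem} and \ref{second hyperbolicity theorem}; this is immediate from convexity of totally geodesic subspaces in a CAT($-1$) manifold, because one can take $p = x$ and $p' = x'$ in the definition. A secondary technical point is the triangulation hypothesis in Theorem \ref{asphericity universal cover}: one needs to fix a $G'$-invariant simplicial structure on $\HC n$ in which $\HR n$ is a subcomplex (obtainable by taking a $G'$-invariant smooth triangulation of $\HC n/G'$ transverse to $\HR n/(G'\cap SO(n,1))$ and lifting), after which the contractibility of closed balls in this triangulation at every scale, together with condition H$(l)$ for a controllably small $l$, follows from uniform continuity of the CAT($-1$) geodesic contractions on compact sets.
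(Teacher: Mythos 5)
Your proposal is correct and follows essentially the same route as the paper, which simply remarks that this theorem ``is proved in the same way'' as the preceding one and carries out exactly the steps you describe: Bergeron's lemma plus Lemma \ref{subgroup and overlap} for part (i), then residual finiteness, Lemma \ref{subgroup and injectivity}, the very small cancellation theorem, and Theorem \ref{asphericity universal cover} for part (ii). Your extra remarks on strong quasi-convexity of $\HR n$ and on the simplicial structure are sensible points of care that the paper leaves implicit.
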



\bibliography{bibliography}
\bibliographystyle{abbrv}

\newpage
\listoftodos[List of corrections]

\end{document}